\newtheorem{theorem}{Theorem}[section]
\newtheorem{proposition}[theorem]{Proposition}
\newtheorem{lemma}[theorem]{Lemma}
\newtheorem{corollary}[theorem]{Corollary}
\newtheorem{assumption}[theorem]{Assumption}
\theoremstyle{remark}
\newtheorem{remark}[theorem]{Remark}
\crefname{theorem}{Theorem}{Theorems}
\Crefname{theorem}{Theorem}{Theorems}
\crefname{assumption}{Assumption}{Assumptions}
\Crefname{assumption}{Assumption}{Assumptions}
\crefname{lemma}{Lemma}{Lemmas}
\Crefname{lemma}{Lemma}{Lemmas}
\crefname{definition}{Definition}{Definitions}
\Crefname{definition}{Definition}{Definitions}
\crefname{proposition}{Proposition}{Propositions}
\Crefname{proposition}{Proposition}{Propositions}
\crefname{algorithm}{Algorithm}{Algorithms}
\Crefname{algorithm}{Algorithm}{Algorithms}
\crefname{section}{Section}{Sections}
\Crefname{section}{Section}{Sections}
\crefname{appendix}{Appendix}{Appendices}
\Crefname{appendix}{Appendix}{Appendices}
\crefname{corollary}{Corollary}{Corollaries}
\Crefname{corollary}{Corollary}{Corollaries}
\crefname{example}{Example}{Examples}
\Crefname{example}{Example}{Examples}
\newcommand{\loc}{\textup{loc}}
\newcommand{\TV}{\mathrm{TV}}
\newcommand{\eps}{\varepsilon}
\renewcommand{\phi}{\varphi}
\newcommand*\dd{\mathop{}\!\mathrm{d}}
\DeclareMathOperator*{\BV}{BV}
\newcommand{\mres}{\mathbin{\vrule height 1.6ex depth 0pt width
		0.13ex\vrule height 0.13ex depth 0pt width 1.3ex}}
\title{Dual Regularization and Outer Approximation of Optimal Control Problems in BV\thanks{
		The authors acknowledge funding by Deutsche
		Forschungsgemeinschaft (DFG) under grant nos.\ BU 2313/7-1, ME 3281/12-1}}
\author{Christian Meyer\thanks{Faculty of Mathematics,
		TU Dortmund University (\url{christian2.meyer@tu-dortmund.de}, \url{annika.schiemann@tu-dortmund.de}).}
	\and 
	Annika Schiemann\footnotemark[2]}
\begin{document}
	\maketitle
	
\begin{abstract}
    This paper is concerned with an elliptic optimal control problem  with total variation (TV) restriction on the control in the constraints.
    We introduce a regularized optimal control problem by applying a quadratic regularization of the dual representation of the TV-seminorm. 
    	The regularized optimal control problem can be solved by means of an outer approximation algorithm. 
    	Convergence of the regularization for vanishing regularization parameter as well as convergence of the outer approximation algorithm is proven.
    Moreover, we derive necessary and sufficient optimality conditions for the original unregularized optimal control problem and use these to construct an exact solution
    that we use in our numerical experiments to confirm our theoretical results.
\end{abstract}

\section{Introduction} \label{sec:Intro}

This paper is concerned with the following optimal control problem:
\begin{equation}\tag{P} \label{eq:P}
    \left.
	\begin{aligned}
		\min \quad & j(y) + \frac{\alpha}{2} \| u - u_d \|_{L^2(\Omega)}^2 \\
		\text{s.t.} \quad &  y \in H^1_0(\Omega), \; u \in \UU, \\
        & - \laplace y = u + f \;\; \text{in } H^{-1}(\Omega), \quad \TV(u) \leq 1,
	\end{aligned}
	\quad \right\}
\end{equation}
where 
\begin{equation}\label{eq:defU}
    \UU \coloneqq 
    \begin{cases}
        \BV(\Omega) \cap L^2(\Omega), & \text{if } \alpha > 0,\\
        \BV(\Omega), & \text{if } \alpha = 0,
    \end{cases}        
    \quad \text{and}\quad 
    \BV(\Omega) \coloneqq \{ u \in L^1(\Omega) : \TV(u) < \infty \}.
\end{equation}
Herein, $\Omega \subset \R^d$, $d \in \N$, $d\leq 4$, is a bounded Lipschitz domain and $\TV: L^1(\Omega) \to [0,\infty]$ denotes the 
total variation defined by 
\begin{equation}
	\TV(u) = \sup \left\{ \int_{\Omega} u \, \div \varphi \dd z: \; \varphi \in C_{c}^\infty(\Omega;\R^d), \; \| \varphi \|_{L^\infty(\Omega;\R^d)} \leq 1 \right\}. \label{eq:TV_C1}
\end{equation}
As usual, $H^{-1}(\Omega)$ is short for the dual of the Sobolev space $H^1_0(\Omega)$. 
Furthermore, $j: H^1_0(\Omega) \to \R$ is assumed to be convex, lower semicontinuous, and bounded from below by a constant $\kappa > -\infty$
and $\alpha \geq 0$ denotes the Tikhonov parameter. We point out that, in some parts of the paper, $\alpha = 0$ is allowed.
Moreover, $u_d \in L^2(\Omega)$ is a given desired control and $f \in H^{-1}(\Omega)$ is a given inhomogeneity. 
Both will serve as auxiliary variables that 
allow us to construct an exact solution to assess the performance of our algorithmic approach, see Section~\ref{sec:exact_solution} below.

Let us shortly address the Poisson equation in \eqref{eq:P}. According to \cite[Theorem~10.1.3]{ABM14}, there holds $\BV(\Omega) \embed L^{d/(d-1)}(\Omega)$
with continuous embedding and thus, $\UU \embed  L^q(\Omega)$ with  $q := \min\{2,d/(d-1)\}$.
Moreover, by Sobolev embeddings, we have $H^1_0(\Omega) \embed L^{2d/(d-2)}(\Omega)$ and therefore, $L^s(\Omega) \embed H^{-1}(\Omega)$ for all 
$s \geq  2d/(d + 2)$.
Since $d \leq 4$ by assumption, we obtain $q \geq 2d/(d+2)$ and consequently $\UU \embed H^{-1}(\Omega)$. Thus, for every $u\in \UU$, 
there exists a unique solution to the state equation in \eqref{eq:P}. By denoting the solution operator of the Poisson equation 
by $S := (-\laplace)^{-1} : H^{-1} \to H^1_0(\Omega)$, we can then rewrite \eqref{eq:P} as an optimization problem in the control variable only:
\begin{equation*}
    \eqref{eq:P} 
    \quad \Longleftrightarrow \quad 
    \left\{\quad
	\begin{aligned}
		\min \quad & J(u) \coloneqq j\big(S(u+f)\big) + \frac{\alpha}{2} \| u - u_d \|_{L^2(\Omega)}^2 \\
		\text{s.t.} \quad &  u \in \UU,  \quad \TV(u) \leq 1.
	\end{aligned}
	\right.
\end{equation*}

Let us put our work into perspective.
Optimization problems with total variation regularization have a wide range of applications. This includes for example denoising in image processing \cite{ROF92, CL97, DJS07, OBDF09}.
However, total variation regularization is also of great interest in terms of optimal control, see for instance \cite{CKP98, CKK17, CKK18, Kay20}, also in combination with integrality restrictions \cite{LM22, MN23, MS23}.

To keep the discussion concise, we focus on contributions in optimal control dealing with controls in $\BV(\Omega)$. 
For the broad literature on TV-regularization in imaging, we refer to the survey articles \cite{CCCNP10, CCN15, CCY15}. 
It is to be noted that imaging problems are structurally different from optimal control problems with PDEs, 
since the forward operator in imaging usually provides a much simpler structure. Consequently, prominent methods 
like the famous Chambolle-Pock method \cite{CP16} or the fast dual proximal gradient method \cite[Section~12]{Bec17} may be inefficient
when applied to optimal control of PDEs.
For this reason, several alternative approaches for optimal control problems in $\BV(\Omega)$ have been introduced in the recent past.

There is a substantial amount of literature that deals with theoretical aspects of optimal control problems with controls in $\BV(\Omega)$, 
such as existence of optimal solutions and the derivation of necessary and sufficient optimality conditions. 
We only mention \cite{CKP98, CKP99, BBHP14, CKK18, CK19, NW22}, where optimality conditions are established for problems involving the 
TV-seminorm as Tikhonov-regularization term. 

Concerning the development of algorithms, several approaches in the literature follow the first-discretize-then-optimize paradigm, i.e., the control is discretized, 
e.g., by piecewise linear and continuous functions and the finite-dimensional problem arising in this way is solved by methods from nonsmooth optimization. 
We only refer to \cite{CKP99, HQ19, HKQ18, CKT21, CIW23} for problems with multi-dimensional control domains. 
Some of these contributions also address the convergence of the discretized problems 
and their solutions, respectively, to their infinite-dimensional counterparts for mesh size tending to zero and present corresponding error estimates. 
Concerning the piecewise constant discretization, 
this is a delicate issue in multi dimensions, since the limit does not recover the ``classical'' TV-seminorm, but an equivalent norm, where the 
Euclidian norm in the dual representation of the TV-seminorm is replaced by another term, see \cite{CKP99, CIW23} for details.
Moreover, the convergence of the optimization algorithms for the discretized problems is frequently investigated, but 
the algorithms are not considered in function space, sometimes it is even not possible to formulate the respective algorithm in function space.
For the development of function space based algorithms, the optimization problems are frequently regularized, for instance by adding 
a squared $H^1_0$-seminorm to the objective, see, e.g., \cite{CK19, EK20}, and additionally smoothing the $L^1$-norm of $\nabla u$ by 
$\sqrt{\|\nabla u\|^2 + \varepsilon}$, $\varepsilon > 0$, see, e.g., \cite{NW22, HM22}. Such a smoothing is also frequently applied after discretization, 
cf.\ e.g.\ \cite{HKQ18, BBHP14}. It behaves similar to an $L^2$-Tikhonov regularization of the pre-dual problem as shown in \cite{HS06} and
\cite{CK11}. In the latter, the $L^2$-regularized pre-dual problem is approximated in function space by means of a $H^2 \cap H^1_0$-Tikhonov term in combination 
with a penalty term for the pointwise constraints on the dual variable. 
The regularized problems arising by the various regularization techniques
are frequently solved by semismooth Newton methods, see, e.g., \cite{CK11, CKK17, HM22}. 
In particular in \cite{HM22}, such a method is investigated in detail in combination with a path-following strategy for the regularization parameter.

To summarize, all methods with guaranteed convergence approximate the control ``from the inside'', i.e., by more regular control functions that do not allow for 
discontinuities and jumps. This is for instance the case, if piecewise linear and continuous control discretizations are used or if 
the $H^1$-seminorm is added as a Tikhonov regularization term. However, the possibility of discontinuities is certainly the main
motivation for choosing $\BV(\Omega)$ as control space and therefore, it would be advantageous, if an algorithm admits discontinuous solutions as well.
In this work, we propose such an algorithm in function space, based on two basic ideas: 
\begin{enumerate}
    \item a \emph{tailored regularization of the dual problem associated with the $\TV$-seminorm} that allows to approximate the control in $\BV(\Omega)$ from ``outside'', 
    i.e., with less regular controls allowing for discontinuities,
    \item an \emph{outer approximation algorithm} for the solution of the dual regularized optimal control problems.
\end{enumerate}
It is to be noted that the idea of outer approximation is 
not new in the context of optimal control with controls in $\BV(\Omega)$ and has already been proven to be useful when 
pointwise binary resp.\ integer constraints are imposed on the control variable, see \cite{BGM24a, BGM24b, BGM24c, MS24}.

This remainder of this paper is structured as follows: 
after having fixed our notation in the rest of this introduction, 
we introduce the dual regularization of the TV-seminorm and analyze its convergence properties in Section~\ref{sec:Regularization}. 
We then introduce the regularized counterpart to \eqref{eq:P} and prove the existence of optimal solutions to \eqref{eq:P} and its regularization 
in Section~\ref{sec:exist}. 
Afterwards, Section~\ref{sec:conv} is dedicated to the convergence analysis of the regularized optimal control problems for the regularization parameter tending to zero.
In Section~\ref{sec:Outer_Approximation}, we state the outer approximation algorithm to solve the regularized control problems and investigate its convergence properties. 
In order to compare our theoretical findings with numerical results, we construct an exact solution of \eqref{eq:P} based on its first-order necessary and sufficient 
optimality conditions in Section~\ref{sec:optimality}.
Finally, in \cref{sec:numerics}, we provide two numerical examples, one with the exact solution from \cref{sec:optimality}, the other one with generic data.

\subsection*{Notation}
By $H(\div;\Omega)$, we denote the vector fields in $L^2(\Omega;\R^d)$, whose distributional divergence is an element of $L^2(\Omega)$, i.e., 
\begin{equation*}
    H(\div;\Omega) := \{ \omega \in L^2(\Omega;\R^d) \colon \div \omega \in L^2(\Omega)\}.
\end{equation*}
Equipped with the scalar product 
\begin{equation*}
    (\omega, \psi)_{H(\div;\Omega)} := \int_\Omega (\omega, \psi)_{\R^d}\,\d x + \int_\Omega \div \omega \, \div \psi\, \d x ,
\end{equation*}
this space becomes a Hilbert space.
With $H_0(\div;\Omega)$ we denote the closure of $C^\infty_c(\Omega;\R^d)$ w.r.t.\ the norm induced by the above scalar product. 
It is well known that, if $\Omega$ has a Lipschitz boundary, then vector fields in $H(\div;\Omega)$ admit a normal trace $\tau_n$ in $H^{-1/2}(\partial\Omega)$
and in this case, $H_0(\div;\Omega)$  is just the subspace of all elements in $H(\div;\Omega)$, whose normal traces vanish, cf.\ \cite[Chap.~1]{Tem79}.

If $X$ and $Y$ are linear normed spaced such that there exists a linear, bounded, and injective map from $X$ to $Y$, we say that $X$ is continuously embedded in 
$Y$ and write $X \embed Y$. If the map is in addition compact, then $X$ is compactly embedded in $Y$ and we write $X \embed^c Y$.
The space of linear and bounded operators from $X$ to $Y$ is denoted by $\LL(X, Y)$. 

Given a function $v\in L^1(\Omega)$, we set $v^0 := v - \frac{1}{|\Omega|} \int_\Omega v\,\d x$. 
The subspace of $L^1(\Omega)$ of all functions with vanishing mean is denoted by 
$L^1_0(\Omega) \coloneqq \{ v\in L^1(\Omega) \colon \frac{1}{|\Omega|} \int_\Omega v\,\d x = 0 \}$. 
Analogously we define $L^p_0(\Omega)$ for $p > 1$.
Given an integrability exponent $p$, we denote the conjugate exponent as usual by $p' := p/(p-1)$.

For a function in $\varphi \in L^\infty(\Omega;\R^d)$, we abbreviate
\begin{equation*}
    \| \varphi \|_\infty := \esssup_{x\in \Omega} \sqrt{\sum_{i=1}^d \varphi_i(x)^2 }.
\end{equation*}

The space of regular $\R^d$-valued Borel measures on the open set $\Omega$ is denoted by $\frakM(\Omega;\R^d)$. 
By the Radon-Riesz theorem it can isometrically be identified with the dual of $C_0(\Omega;\R^d)$, which is the closure of 
$C_c(\Omega;\R^d)$ w.r.t.\ the supremum norm. The total variation of an element $\mu \in \frakM(\Omega;\R^d)$ is denoted by $|\mu|(\Omega)$.
For a function $u\in \BV(\Omega)$, $\nabla u$ stands for the distributional gradient, which, in case of $\BV(\Omega)$, can be identified with an
element of $\frakM(\Omega;\R^d)$, see, e.g., \cite[Chap.~10.1]{ABM14}.

Throughout the paper, $C$ denotes a generic, non-negative constant.

\section{Dual regularization of the total variation} \label{sec:Regularization}

For the following considerations, $u$ denotes an arbitrary function in $L^1_\loc(\Omega)$, unless stated otherwise. 
In order to construct an regularization scheme for the dual representation of the TV-seminorm, we consider
a Hilbert space $\HH$ with $C^\infty_c(\Omega;\R^d) \embed \HH \embed H_0(\div;\Omega)$. 
In general, supremum in \eqref{eq:TV_C1} is of course not attained by a maximizer $\varphi \in \HH$. 
For that reason, we introduce the following regularized version of the total variation. 
To this end, we consider a symmetric, bounded, and coercive bilinear form $a: \HH \times \HH \to \R$, that is, there exist constants $\beta, \gamma >0$ such that
\begin{equation}\label{eq:acoer}
	a[\varphi,\varphi] \geq \beta \| \varphi \|^2_{\HH} \quad \forall\, \varphi \in \HH, \qquad
	a[\varphi_1,\varphi_2] \leq \gamma \, \| \varphi_1 \|_{\HH} \| \varphi_2 \|_{\HH} \quad \forall\,\varphi_1, \varphi_2 \in \HH.
\end{equation}
The dual regularization of the TV-seminorm is now obtained by adding $- \frac{\eps}{2} a[\varphi, \varphi]$ to the objective in the maximization problem \eqref{eq:TV_C1} 
with a regularization parameter $\eps >0$, which yields
\begin{align}\label{eq:TV_eps}
	\TV_\eps(u)
	= \sup \left\{F_\varepsilon(u, \varphi) : \varphi\in C^\infty_c(\Omega;\R^d), \; \|\varphi\|_\infty \leq 1 \right\}, \tag{TV$_\eps$}
\end{align}
where $F_\varepsilon: L^2(\Omega) \times \HH \to \R$ is defined by
\begin{equation}\label{eq:defFeps}
	F_\varepsilon(u, \varphi)
	:= - \frac{\varepsilon}{2}\, a[\varphi, \varphi] + \int_\Omega u \,\div \varphi\, \d x.
\end{equation}
In order to ensure that the supremum in \eqref{eq:TV_eps} is attained by a maximizer $\varphi$, at least if $u$ is sufficiently regular, we require the following

\begin{assumption}\label{assu:density}
      The set  $\{ \varphi \in C^\infty_c(\Omega;\R^d) \colon \|\varphi\|_\infty \leq 1\}$ is dense in $\{ \varphi \in \HH \colon \|\varphi\|_\infty \leq 1\}$. 
\end{assumption}

\begin{remark}\label{rem:density}
    If $\HH = H_0(\div; \Omega)$ or $\HH = H^1_0(\Omega;\R^d)$, then this density assumption is fulfilled thanks to 
    \cite[Theorem~1]{HR15}.
\end{remark} 

Assumption~\ref{assu:density} ensures that, 
in contrast to the TV-seminorm, the supremum in \eqref{eq:TV_eps} is attained by a unique maximizer $\varphi_\varepsilon \in \HH$ provided that $u\in L^2(\Omega)$. 
This easily follows from 
the direct method of calculus of variations due to the coercivity of $a$ along with the weak closedness of $\{\varphi \in \HH: \|\varphi\|_\infty \leq 1\}$.
Uniqueness follows from the strict concavity of $\varphi \mapsto F_\varepsilon(u, \varphi)$.
The existence of the maximizer immediately implies that $\TV_\eps(u) < \infty$ for all $u \in L^2(\Omega)$, whereas
the total variation $\TV(u)$ is of course not necessarily finite for all $u \in L^2(\Omega)$. For later reference, we collect these findings in the following

\begin{lemma}\label{lem:L2TVeps}
    Let Assumption~\ref{assu:density} be fulfilled. 
    Then, for every $u\in L^2(\Omega)$, it holds that $\TV_\eps(u) < \infty$ and 
    \begin{equation*}
       \TV_\eps(u) = \max \left\{F_\varepsilon(u, \varphi) : \varphi\in \HH, \; \|\varphi\|_\infty \leq 1 \right\},
    \end{equation*}        
    where the maximum is attained by a unique maximizer $\varphi_\varepsilon \in \HH$.
\end{lemma}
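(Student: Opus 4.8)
The plan is to establish the three assertions of \cref{lem:L2TVeps}---finiteness of $\TV_\eps(u)$, attainment of the supremum in $\HH$, and uniqueness of the maximizer---via the direct method of the calculus of variations applied to the concave functional $\varphi \mapsto F_\eps(u,\varphi)$ over the admissible set $K := \{\varphi \in \HH : \|\varphi\|_\infty \le 1\}$. The key structural facts to assemble are: (i) $K$ is nonempty (it contains $0$), convex, bounded in $\HH$ by the coercivity bound $\beta\|\varphi\|_\HH^2 \le a[\varphi,\varphi]$ once we note that on a maximizing sequence $F_\eps$ is bounded below by $F_\eps(u,0)=0$, hence $\frac{\eps}{2}a[\varphi,\varphi] \le \int_\Omega u\,\div\varphi\,\d x \le \|u\|_{L^2}\|\div\varphi\|_{L^2} \le C\|\varphi\|_\HH$, giving an a priori bound $\|\varphi\|_\HH \le C(u,\eps)$; and (ii) $K$ is weakly sequentially closed in $\HH$.

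First I would verify that the functional is well-defined and bounded above on $K$: since $\HH \embed H_0(\div;\Omega)$, for $\varphi \in \HH$ the divergence $\div\varphi$ lies in $L^2(\Omega)$ with $\|\div\varphi\|_{L^2} \le C\|\varphi\|_\HH$, so $\int_\Omega u\,\div\varphi\,\d x$ is finite for $u \in L^2(\Omega)$, and combining with the a priori bound from the previous paragraph yields $F_\eps(u,\varphi) \le \|u\|_{L^2} C(u,\eps)$ on $K$; in particular $\TV_\eps(u) \le \sup_{\varphi \in K} F_\eps(u,\varphi) < \infty$. Second, I would take a maximizing sequence $(\varphi_n) \subset K$ for $\sup_{\varphi \in K} F_\eps(u,\varphi)$; the a priori bound shows it is bounded in $\HH$, so after extracting a subsequence $\varphi_n \rightharpoonup \bar\varphi$ weakly in $\HH$. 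Weak closedness of $K$: the constraint $\|\varphi\|_\infty \le 1$ is convex and strongly closed in $L^2(\Omega;\R^d)$ (it is the constraint that $\varphi(x)$ lies a.e.\ in the closed unit ball of $\R^d$), hence weakly closed there; since $\HH \embed H_0(\div;\Omega) \embed L^2(\Omega;\R^d)$ continuously, weak convergence in $\HH$ implies weak convergence in $L^2(\Omega;\R^d)$, so $\bar\varphi \in K$. Passing to the limit in $F_\eps$: the term $\int_\Omega u\,\div\varphi_n\,\d x$ converges to $\int_\Omega u\,\div\bar\varphi\,\d x$ because $\varphi \mapsto \div\varphi$ is bounded linear from $\HH$ to $L^2(\Omega)$ hence weakly continuous, and $u \in L^2(\Omega)$; the term $-\frac{\eps}{2}a[\varphi_n,\varphi_n]$ is (minus) a nonnegative, bounded, coercive quadratic form, hence weakly lower semicontinuous, so $-\frac{\eps}{2}a[\bar\varphi,\bar\varphi] \ge \limsup_n\big(-\frac{\eps}{2}a[\varphi_n,\varphi_n]\big)$. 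Therefore $F_\eps(u,\bar\varphi) \ge \limsup_n F_\eps(u,\varphi_n) = \sup_{\varphi \in K} F_\eps(u,\varphi)$, so $\bar\varphi$ is a maximizer and the supremum over $K$ is attained.

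Next I would reconcile this maximum over $K \subset \HH$ with the defining supremum $\TV_\eps(u)$ in \eqref{eq:TV_eps}, which is taken only over the smaller set $K_0 := \{\varphi \in C^\infty_c(\Omega;\R^d) : \|\varphi\|_\infty \le 1\}$. Clearly $\sup_{K_0} F_\eps(u,\cdot) \le \max_{K} F_\eps(u,\cdot)$. For the reverse inequality I invoke \cref{assu:density}: given the maximizer $\bar\varphi \in K$, choose $\varphi_k \in K_0$ with $\varphi_k \to \bar\varphi$ in $\HH$; then $a[\varphi_k,\varphi_k] \to a[\bar\varphi,\bar\varphi]$ by boundedness of $a$ and $\int_\Omega u\,\div\varphi_k\,\d x \to \int_\Omega u\,\div\bar\varphi\,\d x$ by continuity of $\div : \HH \to L^2(\Omega)$, so $F_\eps(u,\varphi_k) \to F_\eps(u,\bar\varphi) = \max_K F_\eps(u,\cdot)$, which gives $\sup_{K_0} F_\eps(u,\cdot) \ge \max_K F_\eps(u,\cdot)$ and hence equality. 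Finally, uniqueness: the map $\varphi \mapsto F_\eps(u,\varphi)$ is strictly concave on $\HH$ because $\varphi \mapsto -\frac{\eps}{2}a[\varphi,\varphi]$ is strictly concave (by coercivity, $a[\varphi,\varphi] > 0$ for $\varphi \ne 0$) and $\varphi \mapsto \int_\Omega u\,\div\varphi\,\d x$ is affine; since $K$ is convex, a maximizer is unique. The main obstacle---though it is more of a bookkeeping point than a genuine difficulty---is making sure the a priori bound on the maximizing sequence is derived cleanly (using $F_\eps(u,0) = 0$ as a lower comparison value) so that the direct method has a bounded sequence to work with, and correctly tracking the direction of the semicontinuity inequality for the concave (rather than convex) functional.
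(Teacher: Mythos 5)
Your proof is correct and follows essentially the same route the paper indicates: the direct method of the calculus of variations (coercivity of $a$ giving a bounded maximizing sequence, weak closedness of $\{\varphi\in\HH : \|\varphi\|_\infty\le 1\}$, weak upper semicontinuity of $F_\eps(u,\cdot)$), Assumption~\ref{assu:density} to identify the supremum over smooth fields with the maximum over $\HH$, and strict concavity for uniqueness. The paper only sketches these steps, and your write-up simply fills in the same details.
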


Let us collect some properties of the regularized TV-seminorm that will be useful in the following.

\begin{lemma}\label{lem:TVeps}
    Let $u \in L^1(\Omega)$ be given. Then the following assertions hold true:
    \begin{enumerate}[label={\normalfont(\alph*)}]
        \item\label{it:TVepsconst} $\TV_\varepsilon(u) = \TV_\varepsilon(u^0)$ and $\TV_\varepsilon(c) = 0$ for all $c\in \R$.
        \item\label{it:TVepsleqTV}
        For every $\varepsilon > 0$ there holds that  $\TV_\eps(u) \leq \TV(u)$.
        \item\label{it:TVepsmono} 
        The mapping $(0,\infty)\ni \eps \mapsto \TV_\eps(u) \in [0,\infty]$ is monotonically decreasing.
        \item\label{it:TVepsconv}
        There holds that $\TV_\eps(u) \to \TV(u)$ as $\eps \searrow 0$.
    \end{enumerate}
\end{lemma}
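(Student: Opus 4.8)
The plan is to treat the four assertions essentially independently, reusing the variational characterization \eqref{eq:TV_eps} throughout. For part \ref{it:TVepsconst}, I would observe that for every admissible $\varphi \in C_c^\infty(\Omega;\R^d)$ the constant part of $u$ contributes nothing, since $\int_\Omega c\,\div\varphi\,\d x = c\int_{\partial\Omega}\varphi\cdot n = 0$ by the divergence theorem (or directly, since $\div\varphi$ has zero mean for $\varphi \in C_c^\infty$). Hence $F_\varepsilon(u,\varphi) = F_\varepsilon(u^0,\varphi)$ for all such $\varphi$, and taking suprema gives $\TV_\varepsilon(u) = \TV_\varepsilon(u^0)$; applying this with $u = c$ (so $u^0 = 0$ and $F_\varepsilon(0,\varphi) = -\tfrac{\varepsilon}{2}a[\varphi,\varphi] \le 0$, with value $0$ attained at $\varphi = 0$) yields $\TV_\varepsilon(c) = 0$.

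For part \ref{it:TVepsleqTV}, I would simply note that $F_\varepsilon(u,\varphi) = -\tfrac{\varepsilon}{2}a[\varphi,\varphi] + \int_\Omega u\,\div\varphi\,\d x \le \int_\Omega u\,\div\varphi\,\d x$ for every $\varphi$, because $a$ is coercive, hence nonnegative on the diagonal, and $\varepsilon > 0$. Taking the supremum over admissible $\varphi \in C_c^\infty(\Omega;\R^d)$ with $\|\varphi\|_\infty \le 1$ on both sides gives $\TV_\varepsilon(u) \le \TV(u)$ directly from \eqref{eq:TV_C1}. For part \ref{it:TVepsmono}, the same structural observation applies: for $0 < \varepsilon_1 \le \varepsilon_2$ and any admissible $\varphi$, $F_{\varepsilon_2}(u,\varphi) = F_{\varepsilon_1}(u,\varphi) - \tfrac{\varepsilon_2-\varepsilon_1}{2}a[\varphi,\varphi] \le F_{\varepsilon_1}(u,\varphi)$, again because $a[\varphi,\varphi] \ge 0$; taking suprema gives $\TV_{\varepsilon_2}(u) \le \TV_{\varepsilon_1}(u)$, i.e.\ monotone decrease in $\varepsilon$.

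Part \ref{it:TVepsconv} is the one real obstacle. By \ref{it:TVepsmono} the limit $\lim_{\varepsilon\searrow 0}\TV_\varepsilon(u)$ exists in $[0,\infty]$, and by \ref{it:TVepsleqTV} it is $\le \TV(u)$, so it suffices to prove $\liminf_{\varepsilon\searrow 0}\TV_\varepsilon(u) \ge \TV(u)$. Here I would fix an arbitrary admissible test field $\varphi \in C_c^\infty(\Omega;\R^d)$ with $\|\varphi\|_\infty \le 1$; since $\varphi \in \HH$ by the embedding $C_c^\infty(\Omega;\R^d)\embed\HH$, the quantity $a[\varphi,\varphi]$ is a fixed finite number, and $\TV_\varepsilon(u) \ge F_\varepsilon(u,\varphi) = -\tfrac{\varepsilon}{2}a[\varphi,\varphi] + \int_\Omega u\,\div\varphi\,\d x \xrightarrow{\varepsilon\searrow 0} \int_\Omega u\,\div\varphi\,\d x$. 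Therefore $\liminf_{\varepsilon\searrow 0}\TV_\varepsilon(u) \ge \int_\Omega u\,\div\varphi\,\d x$ for every such $\varphi$, and taking the supremum over $\varphi$ yields $\liminf_{\varepsilon\searrow 0}\TV_\varepsilon(u) \ge \TV(u)$, which closes the argument. Note this works whether $\TV(u)$ is finite or $+\infty$: in the latter case the right-hand sides are unbounded above, forcing the liminf to be $+\infty$ as well. The only subtlety to be careful about is that the suprema in \eqref{eq:TV_C1} and \eqref{eq:TV_eps} are both taken over the same set $C_c^\infty(\Omega;\R^d)$, $\|\varphi\|_\infty\le 1$, so no density argument (Assumption~\ref{assu:density}) is needed for this lemma — it is only needed for the attainment statement in Lemma~\ref{lem:L2TVeps}, which we do not invoke here.
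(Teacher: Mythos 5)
Your proofs of parts (a)--(c) coincide with the paper's: the constant component drops out because $\int_\Omega c\,\div\varphi\,\dd x=0$ for compactly supported $\varphi$, and (b), (c) follow from $a[\varphi,\varphi]\ge 0$ by dropping respectively comparing the penalty terms and then taking suprema. For part (d) you take a genuinely different, and in fact cleaner, route: you fix a single admissible test field $\varphi$, let $\varepsilon\searrow 0$ so that the penalty $\tfrac{\varepsilon}{2}a[\varphi,\varphi]$ vanishes (a finite number for each fixed $\varphi\in\HH$), conclude $\liminf_{\varepsilon\searrow 0}\TV_\varepsilon(u)\ge\int_\Omega u\,\div\varphi\,\dd x$, and only then take the supremum over $\varphi$; combined with the upper bound from (b) and the monotone existence of the limit from (c), this closes the argument. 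The paper instead works with a maximizing sequence $\{\varphi_k\}$ for $\TV(u)$ and constructs a coupled null sequence $\varepsilon_k=(k\max_{j\le k}\|\varphi_j\|_\HH^2)^{-1}$ so that the penalty vanishes along the diagonal, and then invokes monotonicity to pass from this particular null sequence to arbitrary ones. Both arguments rest on the same observation---the penalty is $O(\varepsilon)$ for each fixed test field---but your exchange of the $\varepsilon$-limit and the supremum avoids the diagonal construction and handles the case $\TV(u)=\infty$ transparently, as you note. Your remark that Assumption~\ref{assu:density} is not needed for this lemma is also correct; the paper's proof does not use it either.
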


\begin{proof}
    Though the proofs are straight forward, we present them for convenience of the reader.

    ad \ref{it:TVepsconst}:
    For all $\varphi\in C^\infty_c(\Omega;\R^d)$ we have
    \begin{equation*}
        \int_\Omega u \, \div \varphi\, \d x - \frac{\varepsilon}{2}\, a[\varphi, \varphi] 
        =  \int_\Omega u^0 \div \varphi\, \d x - \frac{\varepsilon}{2}\, a[\varphi, \varphi], 
    \end{equation*}
    which gives the first assertion. The second follows from 
    \begin{equation*}
         \int_\Omega c \, \div \varphi\, \d x - \frac{\varepsilon}{2}\, a[\varphi, \varphi]  
         = - \frac{\varepsilon}{2}\, a[\varphi, \varphi]  \leq 0 \quad \forall\, \varphi\in C^\infty_c(\Omega;\R^d) \colon \|\varphi\|_\infty \leq 1
    \end{equation*}
    and the maximum zero is attained by $\varphi \equiv 0$.

    ad \ref{it:TVepsleqTV}: 
    Let $\varphi\in C^\infty_c(\Omega;\R^d)$ with $\|\varphi\|_\infty \leq 1$ be arbitrary. Then we have
    \begin{equation*}
    \begin{aligned}
        \int_\Omega u \, \div \varphi\, \d x - \frac{\varepsilon}{2}\, a[\varphi, \varphi] 
        \leq \int_\Omega u \, \div \varphi\, \d x \leq \TV(u)
    \end{aligned}
    \end{equation*}        
    	due to the coercivity of $a$.
    Taking the supremum over all $\varphi\in C^\infty_c(\Omega;\R^d)$ with $\|\varphi\|_\infty \leq 1$ then yields the claim.
	
	ad \ref{it:TVepsmono}:
	We argue analogously to above. Let $\varphi\in C^\infty_c(\Omega;\R^d)$ with $\|\varphi\|_\infty \leq 1$ be arbitrary and 
	let $\eps_1, \eps_2 > 0$ with $\eps_1 \leq \eps_2$ be given. 
    Then there holds	
    \begin{equation*}
    \begin{aligned}
        \int_\Omega u \div \varphi\,  \d x - \frac{\eps_2}{2} \, a[\varphi,\varphi] 
        \leq \int_\Omega u \div \varphi \, \d x - \frac{\eps_1}{2} \, a[\varphi,\varphi] \leq \TV_{\varepsilon_1}(u)
    \end{aligned}
    \end{equation*}
   Again, taking the supremum over all $\varphi\in C^\infty_c(\Omega;\R^d)$ with $\|\varphi\|_\infty \leq 1$ gives the result.
	    
    ad \ref{it:TVepsconv}: 
	Let $\{ \phi_k \}_{k \in \N} \subset C^\infty_c(\Omega;\R^d)$ be a maximizing sequence of \eqref{eq:TV_C1}, that is, there holds $\| \phi_k \|_{L^\infty(\Omega;\R^d)} \leq 1$ for all $k \in \N$ and 
	$\int_\Omega u \div \phi_k \dd x \to \TV(u)$ as $k \to \infty$. Thus $\phi_k$ is also feasible for \eqref{eq:TV_eps} and consequently, there holds
	\begin{align*}
		\TV_\eps (u) \geq F_\eps(u, \phi_k) \geq - \frac{\eps \gamma}{2} \| \phi_k \|_{\HH}^2 + \int_\Omega u \div \phi_k \dd x \quad \forall\, k\in \N
	\end{align*}
	for all $\eps >0$ by the boundedness of $a$. Consider the monotonically decreasing null sequence $\{ \eps_k \}_{k \in \N} \subset \R_{>0}$ defined by
	\begin{align*}
		\eps_k \coloneqq \frac{1}{k\, \max_{j \in \N, j \leq k} \| \phi_j \|_{\HH}^2}.
	\end{align*}
	Then, by part~\ref{it:TVepsleqTV}, there holds
	\begin{align*}
		\TV(u) \geq \limsup_{k\to\infty} \TV_{\eps_k}(u) 
		\geq \liminf_{k\to\infty} \TV_{\eps_k}(u) 
		\geq \lim_{k\to \infty} - \frac{\gamma}{2k} + \int_\Omega u \div \phi_k \dd x = \TV(u)
	\end{align*}
	as $k \to \infty$. By the monotonicity of the mapping  $\eps \mapsto \TV_\eps(u)$ by part~\ref{it:TVepsmono}, 
	there thus holds $\TV_{\eps_n}(u) \to \TV(u)$ for an arbitrary null sequence $\{\eps_n\}_{n\in\N} \subset \R_{>0}$.
\end{proof}

\begin{lemma}\label{lem:epsnormconv}
    Let Assumption~\ref{assu:density} be valid. Then, 
	for each $\varepsilon > 0$, the mapping $L^2(\Omega) \ni u \mapsto \TV_\eps(u) \in \R$ is convex and continuous.
\end{lemma}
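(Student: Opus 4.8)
The plan is to obtain convexity essentially for free from the sup-representation of $\TV_\eps$, and then to upgrade to continuity by establishing a local Lipschitz estimate that rests on an a priori bound for the regularized dual maximizer.

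For convexity, I observe that for each fixed admissible test field $\varphi$ --- by \cref{lem:L2TVeps} we may equivalently take $\varphi \in \HH$ with $\|\varphi\|_\infty \le 1$ --- the functional $u \mapsto F_\varepsilon(u,\varphi) = -\frac{\eps}{2} a[\varphi,\varphi] + \int_\Omega u\,\div\varphi\,\dd x$ is affine in $u$, since $-\frac{\eps}{2} a[\varphi,\varphi]$ is constant in $u$ and $u \mapsto \int_\Omega u\,\div\varphi\,\dd x$ is linear and continuous on $L^2(\Omega)$. Hence $\TV_\eps = \sup_\varphi F_\varepsilon(\cdot,\varphi)$ is a pointwise supremum of affine continuous functions and therefore convex; \cref{lem:L2TVeps} additionally guarantees that it is real-valued on all of $L^2(\Omega)$, so it is a proper convex function.

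For continuity, the key step is an a priori bound on the maximizer. Let $\varphi_\eps(u) \in \HH$ denote the unique maximizer from \cref{lem:L2TVeps}. Testing the maximization with $\varphi \equiv 0$ yields $\TV_\eps(u) = F_\varepsilon(u,\varphi_\eps(u)) \ge 0$, hence $\tfrac{\eps}{2} a[\varphi_\eps(u),\varphi_\eps(u)] \le \int_\Omega u\,\div\varphi_\eps(u)\,\dd x$. Combining the coercivity in \eqref{eq:acoer} on the left-hand side, Cauchy--Schwarz on the right-hand side, the elementary bound $\|\div\psi\|_{L^2(\Omega)} \le \|\psi\|_{H(\div;\Omega)}$, and the continuous embedding $\HH \embed H_0(\div;\Omega)$, one arrives at $\|\div\varphi_\eps(u)\|_{L^2(\Omega)} \le \frac{C}{\eps}\|u\|_{L^2(\Omega)}$ with $C$ independent of $u$ and $\eps$. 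Then, for $u_1,u_2 \in L^2(\Omega)$, using the (admissible) maximizer $\varphi_\eps(u_1)$ as a competitor in the problem defining $\TV_\eps(u_2)$ gives
\[
\TV_\eps(u_1) - \TV_\eps(u_2) \le F_\varepsilon(u_1,\varphi_\eps(u_1)) - F_\varepsilon(u_2,\varphi_\eps(u_1)) = \int_\Omega (u_1-u_2)\,\div\varphi_\eps(u_1)\,\dd x \le \frac{C}{\eps}\|u_1\|_{L^2(\Omega)}\|u_1-u_2\|_{L^2(\Omega)},
\]
and interchanging the roles of $u_1$ and $u_2$ yields $|\TV_\eps(u_1) - \TV_\eps(u_2)| \le \frac{C}{\eps}\max\{\|u_1\|_{L^2(\Omega)},\|u_2\|_{L^2(\Omega)}\}\,\|u_1-u_2\|_{L^2(\Omega)}$, i.e.\ Lipschitz continuity on bounded subsets of $L^2(\Omega)$, and in particular continuity.

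The only genuine obstacle is the continuity part: in infinite dimensions convexity alone does not imply continuity, so one really has to produce the quantitative bound on $\|\div\varphi_\eps(u)\|_{L^2(\Omega)}$. The crucial insight is that comparing the maximizer against the trivial competitor $\varphi \equiv 0$ converts the coercivity of $a$ together with the embedding $\HH \embed H_0(\div;\Omega)$ into precisely such a bound; everything else --- the affine-supremum argument and the Cauchy--Schwarz estimates --- is routine. (If one prefers not to invoke \cref{assu:density}/\cref{lem:L2TVeps}, the same computation can be run along a maximizing sequence $\varphi_k \in C_c^\infty(\Omega;\R^d)$ with $F_\varepsilon(u_1,\varphi_k) \ge 0$.)
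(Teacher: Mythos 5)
Your proof is correct, and the convexity part coincides with the paper's argument (pointwise supremum of functions affine in $u$). For continuity, however, you take a genuinely different and somewhat more elementary route. The paper works with the variational inequality characterizing the maximizer $\varphi_\eps(u)$, inserts the maximizer for $u_2$ into the inequality for $u_1$ and vice versa, and concludes that the solution map $u \mapsto \varphi_\eps(u)$ is globally Lipschitz from $L^2(\Omega)$ to $\HH$ with constant proportional to $\eps^{-1}$; continuity of $\TV_\eps$ then follows by composing with the continuous $F_\eps$. You instead bypass any stability analysis of the maximizer: testing against $\varphi \equiv 0$ gives the a priori bound $\|\div \varphi_\eps(u)\|_{L^2(\Omega)} \leq C\eps^{-1}\|u\|_{L^2(\Omega)}$, and the standard marginal-function trick (using the maximizer for $u_1$ as a competitor in the problem for $u_2$, which is legitimate by Lemma~\ref{lem:L2TVeps}) yields a direct Lipschitz estimate for the value function on bounded sets, with an explicit modulus $C\eps^{-1}\max\{\|u_1\|_{L^2},\|u_2\|_{L^2}\}$. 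The paper's argument buys the stronger intermediate statement that the dual maximizer itself depends Lipschitz-continuously on $u$ (a fact of independent interest for the algorithmic part), while yours is shorter, avoids the optimality system altogether, gives a quantitative modulus for $\TV_\eps$ directly, and, as you correctly note, the continuity estimate can even be run along a maximizing sequence in $C^\infty_c(\Omega;\R^d)$, so that Assumption~\ref{assu:density} is only needed to make the competitor argument literal via Lemma~\ref{lem:L2TVeps}. Both proofs are sound.
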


\begin{proof}
    Though the assertion is straight forward to prove, we shortly sketch the arguments for convenience of the reader.
	Let $\eps > 0$ be arbitrary and $\lambda \in [0,1]$. Then, for each $u,\, v \in L^2(\Omega)$, there holds 
    \begin{equation}\label{eq:TVepsconvex}
    	\begin{aligned}
	    & \TV_\eps(\lambda u + (1-\lambda ) v) \\
        & \quad = \sup \left\{ \lambda F_\eps(u,\phi) + (1-\lambda)F_\eps(v,\varphi) : \varphi \in \HH, \|\varphi\|_{\infty} \leq 1 \right\} \\
	    & \quad \leq \lambda \sup \left\{ F_\eps(u,\varphi) : \varphi \in \HH, \|\varphi\|_{\infty} \leq 1 \right\} 
	    + (1-\lambda) \sup \left\{ F_\eps(v,\varphi) : \varphi \in \HH, \|\varphi\|_{\infty} \leq 1 \right\} \\
	    & \quad = \lambda \TV_\eps(u) + (1-\lambda)\TV_\eps(v),
	\end{aligned}     
    \end{equation}
	which shows the claimed convexity of the mapping.
	
	To prove its continuity, let $u_1, u_2\in L^2(\Omega)$ be arbitrary and denote the functions realizing the supremum in \eqref{eq:TV_eps} 
	according to Lemma~\ref{lem:L2TVeps} by $\varphi_i \in \HH$, $i=1,2$. 
	The necessary and sufficient optimality conditions of \eqref{eq:TV_eps} read
	\begin{align*}
		\varepsilon \, a[\varphi_i, v - \varphi_i] \geq \int_\Omega u_i \,\div (v - \varphi_i)\,\d x \quad
		\forall \,v \in \HH : \|v\|_{\infty} \leq 1
	\end{align*}
	for $i=1,2$. If one inserts $v = \varphi_2$ in the inequality for $\varphi_1$ and vice versa and adds the arising inequalities, then 
	the coercivity of $a$  along with the continuous embedding $\HH \embeds H(\div;\Omega)$ implies
	\begin{align*}
		\eps \beta \| \varphi_1 - \varphi_2 \|^2_{\HH} \leq \varepsilon\, a[\varphi_1 - \varphi_2, \varphi_1 - \varphi_2]
		\leq \int_\Omega (u_1-u_2) \div(\varphi_1-\varphi_2) \dd x  \leq C \|u_1 - u_2\|_{L^2(\Omega)} \, \|\varphi_1 - \varphi_2\|_{\HH}.
	\end{align*}
    Thus the solution mapping of the maximization problem in \eqref{eq:TV_eps} is globally Lipschitz continuous 
    (with a Lipschitz constant proportional 	to $\varepsilon^{-1}$).
	The continuity of $F_\varepsilon : L^2(\Omega) \times \HH \to \R$, $(u, \varphi) \mapsto F_\varepsilon(u, \varphi)$
	then gives the claimed continuity of $\TV_\eps$.
\end{proof}

\begin{remark}
    Similarly to \eqref{eq:TVepsconvex}, one shows that the mapping $L^2(\Omega) \ni u \mapsto \TV_\eps(u) \in \R$ satisfies the triangle inequality, i.e.,
    $\TV_\eps(u+v) \leq \TV_\eps(u) + \TV_\eps(v)$ for all $u,v\in L^2(\Omega)$. However, it is no seminorm, since it is not positive-1-homogeneous.
    Nonetheless, with a slight abuse of notation, we will frequently call $\TV_\eps(u)$ \emph{(dual) regularized TV-seminorm} in all what follows.
\end{remark}
 
If $u$ is no function from $L^2(\Omega)$, its regularized TV-seminorm is not necessarily finite so that a result as in Lemma~\ref{lem:epsnormconv} cannot be expected, 
but the following is true instead:

\begin{lemma}\label{lem:weaklyclosed}
    For every $\varepsilon > 0$ and every  $\alpha \in (-\infty, \infty]$, the sublevel set $N_\varepsilon(\alpha) := \{ u \in L^1(\Omega) \colon \TV_\varepsilon(u) \leq \alpha \}$ is convex and closed.
    The mapping $L^1(\Omega) \ni u \mapsto \TV_\varepsilon(u) \in [0,\infty]$ is convex and lower semicontinuous.
\end{lemma}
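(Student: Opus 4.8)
The plan is to exhibit $\TV_\eps$ as a pointwise supremum of a family of continuous affine functionals on $L^1(\Omega)$, and then to invoke the standard stability of convexity and lower semicontinuity under suprema, together with the fact that sublevel sets of a convex lower semicontinuous function are convex and closed.

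Concretely, I would first fix $\varphi \in C^\infty_c(\Omega;\R^d)$ with $\|\varphi\|_\infty \leq 1$ and note that $\div\varphi \in C^\infty_c(\Omega) \subset L^\infty(\Omega)$, so that $u \mapsto \int_\Omega u\,\div\varphi\,\d x$ is a bounded linear functional on $L^1(\Omega)$; since $-\frac{\eps}{2}a[\varphi,\varphi]$ is a constant, the map $G_\varphi \colon L^1(\Omega) \to \R$, $G_\varphi(u) := F_\eps(u,\varphi)$, is affine and continuous, hence in particular convex and lower semicontinuous. By the definition \eqref{eq:TV_eps},
\[
    \TV_\eps(u) = \sup\bigl\{ G_\varphi(u) : \varphi \in C^\infty_c(\Omega;\R^d),\ \|\varphi\|_\infty \leq 1 \bigr\},
\]
a supremum of convex, lower semicontinuous functions; consequently $\TV_\eps \colon L^1(\Omega) \to [0,\infty]$ is convex and lower semicontinuous (and $[0,\infty]$-valued, since $\varphi \equiv 0$ is admissible and yields $G_0 \equiv 0$). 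For the sublevel sets, the case $\alpha = \infty$ gives $N_\eps(\infty) = L^1(\Omega)$, and for $\alpha \in \R$ I would write $N_\eps(\alpha) = \bigcap_\varphi \{ u \in L^1(\Omega) : G_\varphi(u) \leq \alpha\}$, where each member of the intersection is the preimage of the closed half-line $(-\infty,\alpha]$ under a continuous affine map, hence closed and convex; an arbitrary intersection of closed convex sets is closed and convex.

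I do not expect a genuine obstacle here. The one point meriting a line of care is the continuity of $G_\varphi$ on $L^1(\Omega)$: this rests precisely on $\div\varphi$ being bounded for $\varphi \in C^\infty_c(\Omega;\R^d)$, which is why the argument works over all of $L^1(\Omega)$ rather than only over $L^2(\Omega)$ as in \cref{lem:epsnormconv}. Correspondingly, and in contrast to that lemma, we make no use of a maximizer in $\HH$, so the statement requires neither finiteness of $\TV_\eps$ nor \cref{assu:density}.
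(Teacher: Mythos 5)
Your proof is correct and follows essentially the same route as the paper: the paper likewise obtains convexity from the supremum-of-affine-functionals structure (the estimate \eqref{eq:TVepsconvex}) and closedness of $N_\varepsilon(\alpha)$ by passing to the limit in $\int_\Omega u_n \,\div\varphi\,\d x - \tfrac{\varepsilon}{2}a[\varphi,\varphi] \leq \alpha$ for each fixed test function, which is exactly your intersection-of-closed-half-spaces argument. The only cosmetic difference is the order of deduction (the paper gets lower semicontinuity from the closed sublevel sets, you get it directly from the supremum representation), and your remark that neither finiteness of $\TV_\eps$ nor Assumption~\ref{assu:density} is needed is consistent with the paper.
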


\begin{proof}
    Convexity of $N_\varepsilon(\alpha)$ and $\TV_\varepsilon : L^1(\Omega) \to [0, \infty]$  
    follows by the same estimate as in \eqref{eq:TVepsconvex}. To show the closedness of $N_\varepsilon(\alpha)$, consider a sequence $\{u_n\} \subset N_\varepsilon(\alpha)$ 
    converging to $u$ in $L^1(\Omega)$. Then we have for every $\varphi \in C_c^\infty(\Omega;\R^d)$ with $\|\varphi\|_\infty \leq 1$ that 
    \begin{equation*}
        \alpha \geq \int_\Omega u_n \,\div \varphi \,\d x - \frac{\varepsilon}{2}\, a[\varphi, \varphi] 
        \to \int_\Omega u \,\div \varphi \,\d x - \frac{\varepsilon}{2}\, a[\varphi, \varphi] \quad \text{as } n \to \infty
    \end{equation*}
    and thus $u \in N_\varepsilon(\alpha)$. Since its sublevel sets are closed for every $\alpha$, $\TV_\varepsilon$ is lower semicontinuous.
\end{proof} 

It is well known that the distributional gradient of a function in $\BV(\Omega)$ is a regular Borel measure. For reasons of comparison, 
let us shortly address the connection between the regularized TV-seminorm and the distributional gradient. 
 
\begin{lemma}\label{lem:TVepsDu}
    Suppose that $C^\infty_c(\Omega;\R^d)$ is dense in $C_0(\Omega;\R^d) \cap \HH$. 
    Then a function $u\in L^1_{\textup{loc}}(\Omega)$ satisfies $\TV_\varepsilon(u) < \infty$, if and only if there exists $\mu \in \frakM(\Omega;\R^d)$ and $\ell \in \HH^*$ such that 
    the distributional gradient of $u$, denoted by $D u$, satisfies
    \begin{equation}\label{eq:distrderiv}
        - D u(\varphi) = \int_\Omega \varphi \cdot \d \mu + \dual{\ell}{\varphi} \quad \forall\, \varphi \in C^\infty_c(\Omega;\R^d),
    \end{equation}
    i.e., $D u \in \frakM(\Omega;\R^d) + \HH^*$.
\end{lemma}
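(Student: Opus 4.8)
The plan is to prove the two implications separately. For the ``if'' direction, assume that $Du$ admits the representation \eqref{eq:distrderiv} with $\mu\in\frakM(\Omega;\R^d)$ and $\ell\in\HH^*$. Then, for every $\varphi\in C^\infty_c(\Omega;\R^d)$ with $\|\varphi\|_\infty\le1$, we have $\int_\Omega u\,\div\varphi\,\dd x = -Du(\varphi) = \int_\Omega\varphi\cdot\dd\mu + \dual{\ell}{\varphi}$, and inserting this into the definition \eqref{eq:defFeps} of $F_\varepsilon$, together with the coercivity of $a$ from \eqref{eq:acoer}, the estimate $\int_\Omega\varphi\cdot\dd\mu\le|\mu|(\Omega)$, and $\dual{\ell}{\varphi}\le\|\ell\|_{\HH^*}\|\varphi\|_\HH$, yields
\begin{equation*}
    F_\varepsilon(u,\varphi) \le -\frac{\varepsilon\beta}{2}\,\|\varphi\|_\HH^2 + \|\ell\|_{\HH^*}\,\|\varphi\|_\HH + |\mu|(\Omega).
\end{equation*}
The right-hand side is a concave quadratic in $\|\varphi\|_\HH$ and is therefore bounded above by $\|\ell\|_{\HH^*}^2/(2\varepsilon\beta) + |\mu|(\Omega) < \infty$; taking the supremum over all admissible $\varphi$ gives $\TV_\varepsilon(u)<\infty$.

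For the converse, set $M := \TV_\varepsilon(u) < \infty$. The key step is to show that the linear functional $T\colon C^\infty_c(\Omega;\R^d)\to\R$, $T\varphi := \int_\Omega u\,\div\varphi\,\dd x = -Du(\varphi)$, is continuous with respect to the combined norm $\varphi\mapsto\|\varphi\|_\infty+\|\varphi\|_\HH$. Indeed, by the definition \eqref{eq:TV_eps} of $\TV_\varepsilon$ and the boundedness of $a$, every $\varphi\in C^\infty_c(\Omega;\R^d)$ with $\|\varphi\|_\infty\le1$ satisfies $T\varphi \le M + \tfrac{\varepsilon}{2}\,a[\varphi,\varphi] \le M + \tfrac{\varepsilon\gamma}{2}\,\|\varphi\|_\HH^2$. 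Applying this to $\varphi/\lambda$ with $\lambda := \|\varphi\|_\infty + \|\varphi\|_\HH$, which is admissible and satisfies $\|\varphi/\lambda\|_\HH\le1$, and then replacing $\varphi$ by $-\varphi$, we obtain
\begin{equation*}
    |T\varphi| \le \Big(M + \frac{\varepsilon\gamma}{2}\Big)\big(\|\varphi\|_\infty + \|\varphi\|_\HH\big) \qquad\text{for all } \varphi\in C^\infty_c(\Omega;\R^d).
\end{equation*}

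To extract the decomposition, I would use the density hypothesis to extend $T$ uniquely to a bounded linear functional $\bar T$ on $V := C_0(\Omega;\R^d)\cap\HH$ endowed with the norm $\|\cdot\|_\infty+\|\cdot\|_\HH$. Viewing $V$, via the diagonal embedding $\varphi\mapsto(\varphi,\varphi)$, as a subspace of the Banach space $C_0(\Omega;\R^d)\times\HH$ with the sum norm, the Hahn--Banach theorem extends $\bar T$ to an element of $(C_0(\Omega;\R^d)\times\HH)^* = C_0(\Omega;\R^d)^*\times\HH^* = \frakM(\Omega;\R^d)\times\HH^*$, where the last identity is the Radon--Riesz theorem. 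Writing this element as $(\mu,\ell)$ and restricting to the diagonal gives $-Du(\varphi) = \int_\Omega\varphi\cdot\dd\mu + \dual{\ell}{\varphi}$ for all $\varphi\in C^\infty_c(\Omega;\R^d)$, i.e.\ \eqref{eq:distrderiv}; equivalently, $Du\in\frakM(\Omega;\R^d)+\HH^*$.

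The step I expect to be the main obstacle is the middle one, i.e.\ upgrading the scalar bound $\TV_\varepsilon(u)<\infty$ to continuity of $-Du$ in the norm $\|\cdot\|_\infty+\|\cdot\|_\HH$. Since the regularization term $\tfrac{\varepsilon}{2}a[\varphi,\varphi]$ is not positively $1$-homogeneous, the familiar rescaling argument behind ``$\TV(u)<\infty\Rightarrow Du\in\frakM(\Omega;\R^d)$'' does not carry over verbatim; the remedy is to rescale $\varphi$ by a factor $\lambda\ge\|\varphi\|_\HH$, which keeps the quadratic term $\tfrac{\varepsilon\gamma}{2}\|\varphi/\lambda\|_\HH^2$ bounded by a constant. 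Everything afterwards is the standard identification of the dual of an intersection of Banach spaces with the sum of the duals, realized here through a Hahn--Banach extension on the product space.
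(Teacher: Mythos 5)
Your proof is correct and follows essentially the same route as the paper: the forward direction via coercivity of $a$ (your maximization of the concave quadratic is equivalent to the paper's Young-inequality step), and the converse via the bound $|{-}Du(\varphi)|\leq C\bigl(\|\varphi\|_\infty+\|\varphi\|_\HH\bigr)$ on $C^\infty_c(\Omega;\R^d)$ followed by density extension to $C_0(\Omega;\R^d)\cap\HH$ and the identification of its dual with $\frakM(\Omega;\R^d)+\HH^*$. The only difference is that you spell out the rescaling argument and prove the dual-of-intersection identity via the diagonal embedding and Hahn--Banach on $C_0(\Omega;\R^d)\times\HH$, steps the paper leaves implicit or cites.
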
 
 
\begin{proof}
    If $u \in L^1_{\textup{loc}}(\Omega)$ satisfies \eqref{eq:distrderiv}, then, for all $\varphi\in C^\infty_c(\Omega;\R^d)$ with $\|\varphi \|_\infty \leq 1$, 
    the coercivity of the bilinear form $a$ implies
    \begin{equation*}
    \begin{aligned}
        \int_\Omega u \,\div \varphi \,\d x - \frac{\varepsilon}{2}\, a[\varphi, \varphi] &=  \int_\Omega \varphi \cdot \d \mu + \dual{\ell}{\varphi} - \frac{\varepsilon}{2}\, a[\varphi, \varphi] \\
        & \leq |\mu|(\Omega) + \frac{1}{2\varepsilon\beta}\, \|\ell\|_{\HH^*}^2 + \frac{\varepsilon}{2}\,\beta\,\|\varphi\|_\HH^2 - \frac{\varepsilon}{2}\, a[\varphi, \varphi]
        \leq |\mu|(\Omega) + \frac{1}{2\varepsilon\beta}\, \|\ell\|_{\HH^*}^2,
    \end{aligned}
    \end{equation*}
    which implies $\TV_\varepsilon(u) \leq  |\mu|(\Omega) + \frac{1}{2\varepsilon\beta}\, \|\ell\|_{\HH^*}^2 < \infty$.

    To prove the reverse implication, assume that $\TV_\varepsilon(u) < \infty$ such that the distributional derivative of $u$ satisfies
    \begin{equation}\label{eq:Duest}
    \begin{aligned}
        - \dual{Du}{\varphi} = \int_\Omega u \, \div \varphi\, \d x \leq \TV_\varepsilon(u) + \frac{\varepsilon}{2}\, a[\varphi, \varphi]
        \leq \TV_\varepsilon(u) + \frac{\varepsilon}{2}\,\gamma\, \|\varphi\|_\HH^2 \leq C \qquad\qquad & \\
        \forall\, \varphi \in  C^\infty_c(\Omega;\R^d) \colon \|\varphi \|_\infty  + \|\varphi\|_\HH \leq 1. &
    \end{aligned}
    \end{equation}
    Therefore, due to the density of $C^\infty_c(\Omega;\R^d)$ in $C_0(\Omega;\R^d) \cap \HH$, we can extend $Du$ to an element of the dual space
    $(C_0(\Omega;\R^d) \cap \HH)^* = \frakM(\Omega;\R^d) + \HH^*$ as claimed. 
\end{proof}

\begin{proposition}\label{prop:Lqest}
    Let $H^1_0(\Omega;\R^d) \embed \HH$. Then, for every $1 < q < d/(d-1)$ and every $u\in L^1(\Omega)$, there holds that 
    \begin{equation}\label{eq:Lqest}
        \| u \|_{L^q(\Omega) / \R} \leq C \big( \TV_\varepsilon (u) + \varepsilon \big)
    \end{equation}
    with a constant $C>0$ that is independent of $\varepsilon$ and $u$.
\end{proposition}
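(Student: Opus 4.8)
If $\TV_\eps(u)=\infty$ there is nothing to prove, so suppose $\TV_\eps(u)<\infty$. The plan is to bound the quotient norm by duality. Since $q<\infty$, the dual of $L^q(\Omega)/\R$ is $L^{q'}_0(\Omega)$, and $C^\infty_c(\Omega)\cap L^{q'}_0(\Omega)$ is dense in it ($C^\infty_c(\Omega)$ being dense in $L^{q'}(\Omega)$, one approximates and then subtracts a fixed multiple of a smooth unit‑mass bump to restore $\int_\Omega g\,\dd x=0$). Hence, with the convention $\|u\|_{L^q(\Omega)/\R}:=+\infty$ when $u\notin L^q(\Omega)$,
\begin{equation*}
	\|u\|_{L^q(\Omega)/\R}
	=\sup\Big\{\textstyle\int_\Omega u\,g\,\dd x \;:\; g\in C^\infty_c(\Omega),\ \int_\Omega g\,\dd x=0,\ \|g\|_{L^{q'}(\Omega)}\le 1\Big\};
\end{equation*}
indeed, if the right‑hand side is finite then $g\mapsto\int_\Omega u\,g\,\dd x$ extends to a bounded functional on $L^{q'}_0(\Omega)$, hence is represented by some $\bar u\in L^q(\Omega)$, and testing against all $g\in C^\infty_c(\Omega)$ with $\int_\Omega g\,\dd x=0$ forces $u-\bar u$ to be a.e.\ constant, so $u\in L^q(\Omega)$ and the two sides coincide; the reverse inequality is Hahn--Banach. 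It thus suffices to bound $\int_\Omega u\,g\,\dd x$ by $C(\TV_\eps(u)+\eps)$ uniformly over all admissible $g$, which will in particular establish $u\in L^q(\Omega)$.

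Fix such a $g$. The key step is to solve $\div\varphi=g$ by a field of controlled size: by the classical solvability of the divergence equation on bounded Lipschitz domains (Bogovskii's construction) there is $\varphi\in W^{1,q'}_0(\Omega;\R^d)$ with $\div\varphi=g$ and $\|\varphi\|_{W^{1,q'}(\Omega;\R^d)}\le C_B\|g\|_{L^{q'}(\Omega)}\le C_B$, with $C_B=C_B(\Omega,q')$. Here the restriction $q<d/(d-1)$ enters decisively, being equivalent to $q'=q/(q-1)>d$: by the Sobolev embeddings $W^{1,q'}_0(\Omega;\R^d)\embed C(\overline\Omega;\R^d)$ and $W^{1,q'}_0(\Omega;\R^d)\embed H^1_0(\Omega;\R^d)\embed\HH$ we get
\begin{equation*}
	\|\varphi\|_\infty\le C_\infty,\qquad \|\varphi\|_\HH\le C_\HH ,
\end{equation*}
with constants depending only on $\Omega$, $q$, $d$ and the embedding $H^1_0(\Omega;\R^d)\embed\HH$, but not on $g$, $u$ or $\eps$. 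Since moreover $g\in C^\infty_c(\Omega)\subset W^{1,q'}_0(\Omega)$, Bogovskii's construction yields $\varphi\in W^{2,q'}_0(\Omega;\R^d)$, so $\varphi$ can be approximated in $W^{2,q'}$ by fields $\varphi_n\in C^\infty_c(\Omega;\R^d)$; because $q'>d$ this gives $\div\varphi_n\to\div\varphi=g$ uniformly on $\overline\Omega$, and also $\varphi_n\to\varphi$ in $H^1_0(\Omega;\R^d)$, hence in $\HH$.

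Set $\widetilde\varphi:=\varphi/C_\infty$, so $\|\widetilde\varphi\|_\infty\le 1$. Its approximants satisfy $\|\varphi_n\|_\infty\to\|\widetilde\varphi\|_\infty\le 1$, so $\widehat\varphi_n:=\varphi_n/\max\{1,\|\varphi_n\|_\infty\}\in C^\infty_c(\Omega;\R^d)$ is admissible in \eqref{eq:TV_eps} and still converges to $\widetilde\varphi$ in the sense of the previous paragraph; as $u\in L^1(\Omega)$ and $a$ is bounded, this gives $F_\eps(u,\widehat\varphi_n)\to F_\eps(u,\widetilde\varphi)$, whence
\begin{equation*}
	\TV_\eps(u)\ge F_\eps(u,\widetilde\varphi) = -\frac{\eps}{2C_\infty^2}\,a[\varphi,\varphi] + \frac{1}{C_\infty}\int_\Omega u\,\div\varphi\,\dd x .
\end{equation*}
Rearranging and bounding $a[\varphi,\varphi]\le\gamma\|\varphi\|_\HH^2\le\gamma C_\HH^2$ via \eqref{eq:acoer} yields
\begin{equation*}
	\int_\Omega u\,g\,\dd x = \int_\Omega u\,\div\varphi\,\dd x \le C_\infty\,\TV_\eps(u) + \frac{\gamma C_\HH^2}{2C_\infty}\,\eps .
\end{equation*}
Taking the supremum over all admissible $g$ and using the duality formula from the first paragraph proves \eqref{eq:Lqest} with $C:=\max\{C_\infty,\ \gamma C_\HH^2/(2C_\infty)\}$, independent of $\eps$ and $u$.

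The main obstacle is the interplay of two points rather than any single estimate: the sharp, derivative‑gaining solvability of $\div\varphi=g$ in $L^{q'}$, whose natural range $q'>d$ is precisely what imposes $q<d/(d-1)$ and simultaneously produces the $L^\infty$‑bound on $\varphi$ without which $\varphi$ could not serve as a competitor in \eqref{eq:TV_eps}; and the low regularity of $u$, which is only in $L^1(\Omega)$, so that testing \eqref{eq:TV_eps} with the merely $W^{1,q'}_0$‑field $\varphi$ requires an approximation in which the divergences converge uniformly — again a consequence of $q'>d$. Everything else is bookkeeping of constants.
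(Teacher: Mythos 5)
Your proof is correct and follows essentially the same route as the paper: bound the quotient norm by $L^q$--$L^{q'}$ duality, solve $\div\varphi=g$ with the Bogovskii operator, use $q'>d$ to control $\|\varphi\|_\infty$ and $\|\varphi\|_{\HH}$ via Sobolev embedding and $H^1_0(\Omega;\R^d)\embed\HH$, and insert the normalized field as a competitor in \eqref{eq:TV_eps}. The only (harmless) deviations are that you test with mean-zero $g$ instead of projecting $u$ to $u^0$, and you justify admissibility of the test field by a $W^{2,q'}_0$-approximation argument where the paper simply invokes that the Bogovskii operator preserves $C^\infty_c$-regularity.
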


\begin{proof}
    Let $v \in C_c^\infty(\Omega)$ with $\|v\|_{L^{q'}(\Omega)}\leq 1$ be arbitrary. To construct a suitable test function for the regularized TV-seminorm, 
    we define $\varphi := B v^0$, where $B : L^{q'}_0(\Omega) \to W^{1,q'}_0(\Omega;\R^d)$ is the Bogovskii operator from \cite{Bog79}, 
    which satisfies $\div B v = v$ for all $v\in L^{q'}_0(\Omega)$ and is linear and continuous provided that $\Omega$ is a bounded Lipschitz domain, 
    see e.g.\ \cite{CMcI10}. Let us denote the associated operator norm by $\|B\|$.
    Then, due to $q' > d$, Sobolev embedding theorems imply that
    \begin{equation}\label{eq:bogovskii}
    \begin{aligned}
        \|\varphi\|_\infty + \|\varphi\|_{H^1(\Omega;\R^d)} 
        & \leq C_1 \,\| \varphi\|_{W^{1,q'}(\Omega;\R^d)} \\
        & \leq C_1 \, \|B\| \, \|v^0\|_{L^{q'}(\Omega)} \leq C_1\, \|B\|\, \big(1 + |\Omega|^{1/q'} \big) \|v\|_{L^{q'}(\Omega)} 
        =: C_2 \, \|v\|_{L^{q'}(\Omega)} 
    \end{aligned}
    \end{equation}        
    Now since $v \in C^\infty_c(\Omega)$, the mapping properties of the Bogovskii operator imply that $\varphi\in C^\infty_c(\Omega;\R^d)$, too, cf.\ e.g.\ \cite[Theorem~6.6]{DRS10}. 
    Therefore, thanks to \eqref{eq:bogovskii}, the function $\tilde\varphi$ defined by
    \begin{equation*}
        \tilde\varphi := \frac{1}{C_2 \, \|v\|_{L^{q'}(\Omega)}} \, \varphi
    \end{equation*}
    is feasible for the maximization problem in \eqref{eq:TV_eps}, which results in 
    \begin{equation}\label{eq:u0est}
    \begin{aligned}
        \int_\Omega u^0\, v\,\d x = \int_\Omega u^0\, v^0\,\d x
        & =  C_2 \,\|v\|_{L^{q'}(\Omega)}
        \Big( \int_\Omega u^0\, \div \tilde \varphi \,\d x - \frac{\varepsilon}{2}\, a[\tilde\varphi, \tilde\varphi] + \frac{\varepsilon}{2}\, a[\tilde\varphi,\tilde\varphi] \Big) \\
        & \leq C_2 \Big( \TV_\varepsilon(u^0) + \frac{\gamma\, C_\HH^2}{2\,C_2^2\,\|v\|_{L^{q'}(\Omega)}^2 }\, \varepsilon\, \|\varphi\|_{H^1(\Omega;\R^d)}^2 \Big) \\
        & \leq C_2 \Big(  \TV_\varepsilon(u^0) + \frac{\gamma\, C_\HH^2}{2}\, \varepsilon \Big),
    \end{aligned}
    \end{equation}
    where $C_\HH$ is the embedding constant of $H^1_0(\Omega;\R^d) \embed \HH$.
    The desired estimate then follows from 
    \begin{equation*}
        \|u^0\|_{L^{q}(\Omega)} 
       = \sup\Big\{  \int_\Omega u^0\, v\,\d x \colon v \in C^\infty_c(\Omega),\; \|v\|_{L^{q'}(\Omega)} \leq 1\Big\}
       \leq C_2 \Big(  \TV_\varepsilon(u^0) + \frac{\gamma\, C_\HH^2}{2}\, \varepsilon \Big), 
    \end{equation*}
    which finishes the proof.    
\end{proof}

\begin{remark}\label{rem:limitcase}
    Proposition~\ref{prop:Lqest} shows that a bounded regularized TV-seminorm implies a bound in the quotient space $L^{q}(\Omega) / \R$ 
    for every $q < d/(d-1)$. On the other hand, the TV-seminorm (without regularization)
    induces a bound in $L^{d/(d-1)}(\Omega) / \R$, see e.g.\ \cite[Theorem~10.1.3]{ABM14}. It is an open question, if 
    the limit case $q = d/(d-1)$ also holds true with regularization, i.e., if a bound of the regularized TV-seminorm implies a bound of the $L^{d/(d-1)}(\Omega) / \R$-norm.
\end{remark}

\section{Existence of optimal controls}\label{sec:exist}

Given the regularized total variation, we consider the following regularized version of \eqref{eq:P}:
\begin{equation} \tag{P$_\varepsilon$} \label{eq:Peps}
    \left.	
	\begin{aligned}
		\mbox{min } \quad & J(u) \\ 
		\mbox{s.t. } \quad & \TV_\eps(u) \leq 1
	\end{aligned}
	\quad \right\}
\end{equation}

\begin{remark}
    As seen in Lemma~\ref{lem:L2TVeps}, the regularized TV-seminorm is finite for every control in $L^2(\Omega)$. Moreover, as an immediate 
    consequence of Lemma~\ref{lem:TVeps}\ref{it:TVepsleqTV}, every control $u \in \BV(\Omega) \cap L^2(\Omega)$ that is feasible for \eqref{eq:P} 
    is automatically feasible for \eqref{eq:Peps}, too. We have therefore enlarged the set of feasible controls by introducing the dual regularized TV-seminorm.
    This illustrates that we indeed approximate the original problem \eqref{eq:P} ``from the outside'' as claimed in the introduction.
\end{remark}

We first address the existence of optimal solutions to \eqref{eq:Peps}. In case of a positive Tikhonov parameter $\alpha$, the proof is straight forward. 
If $\alpha = 0$, then we can employ Proposition~\ref{prop:Lqest} to verify the existence of optimal controls under an additional coercivity assumption on 
the objective, see Theorem~\ref{thm:existencewoTikhonov} below.

\begin{proposition}\label{prop:mitTikhonov}
    If the Tikhonov parameter satisfies $\alpha > 0$, then, for every $\varepsilon > 0$, \eqref{eq:Peps} admits a unique optimal control in $L^2(\Omega)$.
\end{proposition}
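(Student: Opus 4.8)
The plan is to use the direct method of the calculus of variations. First I would note that $J$ is bounded below: since $j \geq \kappa$ and the Tikhonov term is nonnegative, $\inf_{u} J(u) \geq \kappa > -\infty$ over the feasible set, which is nonempty (for instance $u \equiv 0$ satisfies $\TV_\eps(0) = 0 \leq 1$ by \cref{lem:TVeps}\ref{it:TVepsconst}). Hence we may pick a minimizing sequence $\{u_n\} \subset L^2(\Omega)$ with $\TV_\eps(u_n) \leq 1$ and $J(u_n) \to \inf$.

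Next I would extract a weakly convergent subsequence. Because $\alpha > 0$, the Tikhonov term $\tfrac{\alpha}{2}\|u_n - u_d\|_{L^2(\Omega)}^2$ is bounded along the minimizing sequence (the other term $j(S(u_n+f))$ being bounded below), so $\{u_n\}$ is bounded in $L^2(\Omega)$. By reflexivity of $L^2(\Omega)$ we pass to a subsequence with $u_n \rightharpoonup \bar u$ in $L^2(\Omega)$. It remains to check that $\bar u$ is feasible and optimal. For feasibility, weak $L^2$-convergence implies $L^1$-convergence is not automatic, but $\TV_\eps$ is convex and (by \cref{lem:epsnormconv}) continuous on $L^2(\Omega)$, hence weakly lower semicontinuous on $L^2(\Omega)$; therefore $\TV_\eps(\bar u) \leq \liminf_n \TV_\eps(u_n) \leq 1$, so $\bar u$ is feasible for \eqref{eq:Peps}. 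For optimality, the map $u \mapsto \tfrac{\alpha}{2}\|u - u_d\|_{L^2(\Omega)}^2$ is convex and continuous on $L^2(\Omega)$, hence weakly lsc; and $u \mapsto j(S(u+f))$ is convex (since $j$ is convex and $S$ affine-linear in $u$) and, being a composition of the bounded linear operator $S : H^{-1}(\Omega) \to H^1_0(\Omega)$ — noting $L^2(\Omega) \embed H^{-1}(\Omega)$ — with the lower semicontinuous $j$, it is lsc on $L^2(\Omega)$ with respect to strong convergence, hence weakly lsc by convexity. Thus $J$ is weakly lsc on $L^2(\Omega)$ and $J(\bar u) \leq \liminf_n J(u_n) = \inf$, so $\bar u$ is an optimal control.

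Finally, uniqueness follows from strict convexity. The feasible set $\{u \in L^2(\Omega) : \TV_\eps(u) \leq 1\}$ is convex by \cref{lem:weaklyclosed}, and $J$ is strictly convex on $L^2(\Omega)$: the Tikhonov term $\tfrac{\alpha}{2}\|u - u_d\|_{L^2(\Omega)}^2$ is strictly convex because $\alpha > 0$, and $u \mapsto j(S(u+f))$ is convex, so their sum is strictly convex. A strictly convex functional has at most one minimizer on a convex set, which gives uniqueness.

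I do not expect a serious obstacle here; the proof is indeed straightforward as the authors announce. The one point requiring a little care is the weak lower semicontinuity of $u \mapsto j(S(u+f))$ on $L^2(\Omega)$: one must invoke that $S$ maps $L^2(\Omega)$ (continuously embedded in $H^{-1}(\Omega)$) boundedly and linearly into $H^1_0(\Omega)$, that $j$ is lsc on $H^1_0(\Omega)$, and then combine strong lower semicontinuity with convexity to upgrade to weak lower semicontinuity — this is where the hypotheses on $j$ and the embedding chain established in the introduction are used.
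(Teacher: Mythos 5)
Your proof is correct and follows essentially the same route as the paper: the direct method of calculus of variations, with coercivity from the Tikhonov term, weak lower semicontinuity of $J$, weak closedness of the feasible set, and uniqueness from strict convexity due to $\alpha>0$. The only minor difference is that you establish weak closedness of $\{u\in L^2(\Omega): \TV_\eps(u)\leq 1\}$ via the continuity of $\TV_\eps$ on $L^2(\Omega)$ from \cref{lem:epsnormconv}, which requires \cref{assu:density}, whereas the paper invokes \cref{lem:weaklyclosed} (convexity and closedness of the sublevel sets), which avoids that assumption.
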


\begin{proof}
    The proof follows standard arguments. The objective is weakly lower semicontinuous and, due to $\alpha > 0$, 
    strictly convex and coercive. By Lemma~\ref{lem:weaklyclosed}, the feasible set is weakly closed in $L^2(\Omega)$. The assertion thus directly follows 
    with the direct method of calculus of variations.
\end{proof}

\begin{theorem}\label{thm:existencewoTikhonov}
    Let $d \leq 3$ and assume that the objective $j$ is radially unbounded in the following sense: For all $\hat y \in H^1_0(\Omega) \setminus \{0\}$ and all $R > 0$, there holds that
    \begin{equation}\label{eq:jcoercive}
        \inf \big\{ j ( c\, \hat y + w) \colon w \in H^1_0(\Omega),\; \|w\|_{H^1_0(\Omega)} \leq R \big\} \to \infty \quad \text{as } |c| \to \infty.
    \end{equation}
    Suppose further that  $H^1_0(\Omega;\R^d) \embed \HH$ and choose an integrability exponent $q \in [2d/(d+2), d/(d-1)[$ . 
    Then, for every $\varepsilon > 0$, the optimal control problem 
    \begin{equation}\tag{P$_\varepsilon^0$}\label{eq:ohneTikhonov}
        \left.
        \begin{aligned}
            \min \quad & j(y) \\
            \text{\textup{s.t.}} \quad & u \in L^{q}(\Omega), \quad y \in H^1_0(\Omega), \\
            & - \laplace y = u + f \quad \text{\textup{in }} H^{-1}(\Omega), \quad \TV_\varepsilon(u) \leq 1
        \end{aligned}
        \quad \right\}
    \end{equation}
    admits an optimal solution. The solution is unique provided that $j$ is strictly convex.
\end{theorem}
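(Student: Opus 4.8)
The plan is to apply the direct method of the calculus of variations to \eqref{eq:ohneTikhonov}, with the key additional ingredient being the a priori bound from Proposition~\ref{prop:Lqest} to obtain weak compactness of a minimizing sequence in $L^q(\Omega)$. First I would check that the feasible set of \eqref{eq:ohneTikhonov} is nonempty: since $q \geq 2d/(d+2)$, we have $L^q(\Omega) \embed H^{-1}(\Omega)$, so the constant control $u \equiv 0$ is feasible (note $\TV_\varepsilon(0) = 0 \leq 1$ by Lemma~\ref{lem:TVeps}\ref{it:TVepsconst}), and the infimum $m := \inf \eqref{eq:ohneTikhonov}$ is finite because $j$ is bounded below by $\kappa$. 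Take a minimizing sequence $\{u_n\} \subset L^q(\Omega)$ with $\TV_\varepsilon(u_n) \leq 1$ and $j(y_n) \to m$, where $y_n := S(u_n + f)$.

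The heart of the argument is to show $\{u_n\}$ is bounded in $L^q(\Omega)$. Proposition~\ref{prop:Lqest} (applicable since $H^1_0(\Omega;\R^d) \embed \HH$ and $q < d/(d-1)$) gives $\|u_n\|_{L^q(\Omega)/\R} \leq C(\TV_\varepsilon(u_n) + \varepsilon) \leq C(1 + \varepsilon)$, so the projections $u_n^0$ are bounded in $L^q(\Omega)$. It remains to bound the means $c_n := \frac{1}{|\Omega|}\int_\Omega u_n\,\d x$, and this is where the radial unboundedness hypothesis \eqref{eq:jcoercive} enters. Decompose $y_n = S(u_n^0 + f) + c_n\, S(\mathbf{1}_\Omega)$; setting $\hat y := S(\mathbf{1}_\Omega) \in H^1_0(\Omega)\setminus\{0\}$ and $w_n := S(u_n^0 + f)$, the bound on $u_n^0$ in $L^q(\Omega) \embed H^{-1}(\Omega)$ together with continuity of $S$ gives $\|w_n\|_{H^1_0(\Omega)} \leq R$ for some fixed $R$. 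If $|c_n| \to \infty$ along a subsequence, then \eqref{eq:jcoercive} forces $j(y_n) = j(c_n \hat y + w_n) \to \infty$, contradicting $j(y_n) \to m < \infty$. Hence $\{c_n\}$ is bounded, and therefore $\{u_n\}$ is bounded in $L^q(\Omega)$.

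Since $1 < q < \infty$, $L^q(\Omega)$ is reflexive, so after passing to a subsequence $u_n \weakstarto u$ weakly in $L^q(\Omega)$ (in fact weakly, writing $\rightharpoonup$); then $u_n + f \rightharpoonup u + f$ in $H^{-1}(\Omega)$ and, $S$ being linear and bounded hence weakly continuous, $y_n = S(u_n+f) \rightharpoonup S(u+f) =: y$ in $H^1_0(\Omega)$. The feasible set $\{u \in L^q(\Omega) : \TV_\varepsilon(u) \leq 1\}$ is convex (Lemma~\ref{lem:weaklyclosed}) and closed in $L^1(\Omega)$; since $L^q(\Omega) \embed L^1(\Omega)$ and convex closed sets are weakly closed, it is weakly closed in $L^q(\Omega)$, so $\TV_\varepsilon(u) \leq 1$. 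Finally, $j$ is convex and lower semicontinuous on $H^1_0(\Omega)$, hence weakly lower semicontinuous, so $j(y) \leq \liminf_n j(y_n) = m$; thus $(u,y)$ is an optimal solution. For uniqueness under strict convexity of $j$: if $(u_1,y_1)$ and $(u_2,y_2)$ were two minimizers, then since $y_i = S(u_i + f)$ are distinct unless $u_1 = u_2$, and the midpoint is feasible by convexity, strict convexity of $j$ along the segment would yield a strictly smaller objective value, a contradiction — so $u_1 = u_2$.

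I expect the main obstacle to be the bookkeeping in controlling the mean values $c_n$: one must correctly identify that only the mean component of $y_n$ can be unbounded (the complementary part $w_n$ being controlled through $u_n^0$ and the continuity of $S$ from $H^{-1}$ to $H^1_0$), and then invoke \eqref{eq:jcoercive} with the fixed direction $\hat y = S(\mathbf{1}_\Omega)$ and the uniform radius $R$. The restriction $d \leq 3$ presumably enters to guarantee that the interval $[2d/(d+2), d/(d-1)[$ is nonempty (equivalently $2d/(d+2) < d/(d-1)$, i.e. $2(d-1) < d+2$, i.e. $d < 4$), so that a valid exponent $q$ exists at all; this should be noted but requires no real work.
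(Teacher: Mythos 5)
Your proposal is correct and follows essentially the same route as the paper's proof: decompose $u_n$ into its zero-mean part (bounded in $L^q(\Omega)$ via Proposition~\ref{prop:Lqest}) and its mean (bounded via \eqref{eq:jcoercive} with $\hat y = S\eins$ and the uniform $H^1_0$-bound on $S(u_n^0+f)$), then conclude by weak closedness of the feasible set (Lemma~\ref{lem:weaklyclosed}), weak lower semicontinuity of $j$, and injectivity of $S$ for uniqueness. Your closing remark on the role of $d\leq 3$ matches the paper's own observation.
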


\begin{proof}
    The assertion is more or less a direct consequence of Proposition~\ref{prop:Lqest}. 
    In order to use this result, we rewrite \eqref{eq:ohneTikhonov} as
    \begin{equation*}
        \eqref{eq:ohneTikhonov}
        \quad \Longleftrightarrow\quad 
        \left\{ \quad
        \begin{aligned}
            \min \quad & j\big( S (u^0 + c + f) \big) \\
            \text{s.t.} \quad & u^0 \in L^{q}(\Omega), \; c\in \R,\; \int_\Omega u^0 \,\d x = 0, \; \TV_\varepsilon(u^0 + c) \leq 1,
        \end{aligned}
        \right.
    \end{equation*}
    where again $S = (-\laplace)^{-1} : H^{-1}(\Omega) \to H^1_0(\Omega)$ denotes the solution operator of Poisson's equation. 
    Note that, due to $q \geq 2d/(d+2)$, we have $L^{q}(\Omega) \embed H^{-1}(\Omega)$, since $d\leq 4$, so that $S u^0$ is well defined.
    To show the boundedness of the constant component $c$, we apply \eqref{eq:jcoercive} with $\hat y = S \eins$, where 
    $\eins$ is the function defined by $\eins(x) \equiv 1$ for all $x\in \Omega$. To this end, observe that, for all feasible $u = u^0 + c$, 
    Proposition~\ref{prop:Lqest} together with Lemma~\ref{lem:TVeps}\ref{it:TVepsconst} implies that  
    \begin{equation}\label{eq:statebound}
    \begin{aligned}
        \| S (u^0 + f)\|_{H^1_0(\Omega)} 
        & \leq  \|S\|_{\LL(L^{q}(\Omega), H^1_0(\Omega))}\, \|u^0\|_{L^q(\Omega)}  +  \| S f\|_{H^1_0(\Omega)} \\
        & \leq C (\TV_\varepsilon(u^0) + \varepsilon)  +  \| S f\|_{H^1_0(\Omega)} \\
        & = C (\TV_\varepsilon(u) + \varepsilon)  +  \| S f\|_{H^1_0(\Omega)} 
        \leq C (1 + \varepsilon)  +  \| S f\|_{H^1_0(\Omega)} =: R.    
    \end{aligned}
    \end{equation}        
    Therefore, if we consider an arbitrary infimal (and thus in particular feasible) sequence $\{u^0_n + c_n\}$, then the sequence $\{u_n^0\}$ is bounded in $L^{q}(\Omega)$ 
    by Proposition~\ref{prop:Lqest}, while the radial unboundedness assumption in \eqref{eq:jcoercive} with $\hat y = S \eins$ and $R$ as defined above 
    implies that the sequence $\{c_n\}$ is bounded. Thus every infimal sequence is bounded in $L^{q}(\Omega)$. 
    By Lemma~\ref{lem:weaklyclosed}, the feasible set $\{ u \in L^q(\Omega) \colon \TV_\varepsilon(u) \leq 1\}$ is convex and closed, 
    thus weakly closed in $L^{q}(\Omega)$. Together with the weak lower semicontinuity of $j$ by assumption, this implies that 
    the standard direct method of calculus of variations gives the existence of at least one globally optimal control.
    If $j$ is in addition strictly convex, then the injectivity of $S$ implies that the minimizer is unique.
\end{proof}

\begin{remark}\label{rem:tracking}
    The assumption in \eqref{eq:jcoercive} is fulfilled by prominent examples such as tracking type objectives of the form 
    \begin{equation*}
        j_2(y) = \frac{1}{2}\, \| y - y_d\|_{L^2(\Omega)}^2, \quad j_{1,2}(y) := \frac{1}{2}\, \| \nabla y - z\|_{L^2(\Omega;\R^d)}^2,
    \end{equation*}
    as one easily verifies.
\end{remark}

In case of the original unregularized problem \eqref{eq:P} completely the same arguments apply. For a unified 
presentation, we collect the assumptions of Proposition~\ref{prop:mitTikhonov} and Theorem~\ref{thm:existencewoTikhonov} 
in the following

\begin{assumption}\label{assu:existence}
    In order to ensure the existence of a unique solution to \eqref{eq:Peps}, we assume that 
    \begin{enumerate}[label={\normalfont(\alph*)}]
        \item\label{it:mitTik} either $\alpha > 0$,
        \item\label{it:ohneTik} or $d\leq 3$, $H^1_0(\Omega;\R^d)\embed \HH$, and $j$ satisfies the coercivity condition in \eqref{eq:jcoercive} and is strictly convex.
    \end{enumerate}        
\end{assumption}

\begin{corollary}\label{cor:exP}
    Assume that Assumption~\ref{assu:existence} holds. Then there exists a unique solution $u^*$ to 
    the unregularized optimal control problem \eqref{eq:P}.
\end{corollary}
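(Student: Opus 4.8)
\textbf{Proof plan for Corollary~\ref{cor:exP}.}
The plan is to argue that \eqref{eq:P} (equivalently, its reduced form in the control variable) satisfies exactly the same structural hypotheses that were used in Proposition~\ref{prop:mitTikhonov} and Theorem~\ref{thm:existencewoTikhonov}, the only difference being that the feasible set is now governed by $\TV$ instead of $\TV_\varepsilon$. So first I would recall that the feasible set $\{u \in \UU : \TV(u) \le 1\}$ is nonempty (it contains the constants, since $\TV(c)=0$) and that $\TV : L^1(\Omega) \to [0,\infty]$ is convex and lower semicontinuous with convex, closed sublevel sets; this is classical (it is the supremum of the affine functionals $u \mapsto \int_\Omega u\,\div\varphi\,\d x$ over admissible $\varphi$), and in fact the same one-line argument as in the proof of Lemma~\ref{lem:weaklyclosed} applies verbatim with $\varepsilon = 0$. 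Hence $\{u : \TV(u)\le 1\}$ is weakly closed in whichever $L^p(\Omega)$ we work in.

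Next I would split into the two cases of Assumption~\ref{assu:existence}. In case \ref{it:mitTik}, $\alpha>0$: here $J$ is weakly lower semicontinuous on $L^2(\Omega)$ (the map $u\mapsto j(S(u+f))$ is convex and l.s.c.\ as the composition of the continuous affine map $u\mapsto S(u+f)$ with the convex l.s.c.\ $j$, and the Tikhonov term is weakly l.s.c.), strictly convex, and coercive on $L^2(\Omega)$ because of the $\tfrac{\alpha}{2}\|u-u_d\|_{L^2}^2$ term; an infimal sequence is thus bounded in $L^2(\Omega)$, and since $\BV(\Omega)\cap L^2(\Omega) \embed L^2(\Omega)$ with $\{u : \TV(u)\le 1\}$ weakly closed, the direct method yields a unique minimizer — this is literally the argument of Proposition~\ref{prop:mitTikhonov}. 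In case \ref{it:ohneTik}, $\alpha = 0$: here I would repeat the rewriting used in the proof of Theorem~\ref{thm:existencewoTikhonov}, decomposing $u = u^0 + c$ with $u^0 \in L^q_0(\Omega)$, $q = d/(d-1)$, and $c\in\R$. The embedding $\BV(\Omega) \embed L^{d/(d-1)}(\Omega)$ (cited in the introduction from \cite[Theorem~10.1.3]{ABM14}) gives $\|u^0\|_{L^{d/(d-1)}(\Omega)} \le C\,\TV(u) \le C$ on the feasible set — this is the $\varepsilon = 0$ analogue of Proposition~\ref{prop:Lqest}, and is in fact cleaner because one may take the endpoint exponent. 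Then, exactly as in \eqref{eq:statebound}, the state $S(u^0+f)$ is bounded in $H^1_0(\Omega)$ uniformly over the feasible set, so the radial unboundedness \eqref{eq:jcoercive} applied with $\hat y = S\eins$ forces $\{c_n\}$ to be bounded along an infimal sequence. Thus every infimal sequence is bounded in $L^{d/(d-1)}(\Omega)$ (or in $L^q(\Omega)$ for any admissible $q$), the feasible set is weakly closed there, $j\circ S$ is weakly l.s.c., and strict convexity of $j$ together with injectivity of $S$ gives uniqueness.

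I do not expect any genuine obstacle: the corollary is deliberately set up so that every ingredient — weak closedness of the $\TV$-sublevel set, weak lower semicontinuity and convexity of the reduced objective, and the coercivity mechanism (either via the Tikhonov term or via \eqref{eq:jcoercive} combined with the $\BV \embed L^{d/(d-1)}$ bound) — is already available. The one point that deserves a sentence of care is that in case \ref{it:ohneTik} the control space for \eqref{eq:P} is $\BV(\Omega)$, so one should note that the minimizer produced as a limit in $L^{d/(d-1)}(\Omega)$ of a $\TV$-bounded sequence indeed lies in $\BV(\Omega)$ — this follows because $\TV$ is l.s.c., so $\TV(u^*) \le \liminf \TV(u_n) \le 1 < \infty$ and $u^* \in L^1(\Omega)$, hence $u^* \in \BV(\Omega)$. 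Beyond that, the proof is a matter of quoting the two preceding existence results with $\TV$ in place of $\TV_\varepsilon$, noting that their proofs used nothing about the regularization except convexity, closedness of sublevel sets, and the $L^q/\R$ bound, all of which hold (indeed, with the sharp exponent) for the genuine total variation.
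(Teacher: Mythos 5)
Your proposal is correct and follows essentially the same route as the paper: the paper's own proof simply observes that the $\TV$-ball yields an $L^q$-bound and is convex and closed under strong $L^1$-convergence, and then repeats the arguments of Proposition~\ref{prop:mitTikhonov} and Theorem~\ref{thm:existencewoTikhonov}, which is exactly what you spell out (including the correct use of the endpoint embedding $\BV(\Omega)\embed L^{d/(d-1)}(\Omega)$ in the case $\alpha=0$ and the l.s.c.\ of $\TV$ to conclude the limit lies in $\BV(\Omega)$).
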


\begin{proof}
    Since the set $\{u \in \BV(\Omega) : \TV(u) \leq 1\}$ gives a bound in $L^q(\Omega)$ and is 
    convex and closed w.r.t.\ strong convergence in $L^1(\Omega)$, we can argue in exactly the same way as in the proofs of 
    Proposition~\ref{prop:mitTikhonov} and Theorem~\ref{thm:existencewoTikhonov}.
\end{proof}

\begin{remark}
    It is to be noted that the assumption $d \leq 3$ in Theorem~\ref{thm:existencewoTikhonov} is only needed to ensure that the interval 
    for the integrability exponent $q$, i.e., $[2d/(d+2), d/(d-1)[$\,, is non-empty. If Proposition~\ref{prop:Lqest} also holds for the limit case $d/(d-1)$, 
    then the assertion of Theorem~\ref{thm:existencewoTikhonov} would be true for $d=4$, too. 
    Note moreover that, if $j$ is well defined for states in $L^2(\Omega)$, then there is no need to consider $S = (-\laplace)^{-1}$ as an operator in 
    $H^1_0(\Omega)$ and we can work with less regular Sobolev spaces in order to decrease the lower bound for $q$. 
    Thus the existence result from Theorem~\ref{thm:existencewoTikhonov} is also true for spatial dimensions larger than 3, if $j: L^2(\Omega) \to \R$.
    Since $d=2,3$ are the relevant cases for practical applications, we do not elaborate this in detail for the purpose of a concise presentation.
    
    Note moreover that, in case without regularization, the assertion of Proposition~\ref{prop:Lqest} also holds in the limit case, 
    i.e., $\BV(\Omega) \embed L^{d/(d-1)}(\Omega)$, see
    Remark~\ref{rem:limitcase}. Thus the assertion of Corollary~\ref{cor:exP} also holds, if $d=4$, $\alpha = 0$, 
    $H^1_0(\Omega;\R^d)\embed \HH$, and $j$ satisfies \eqref{eq:jcoercive} and is strictly convex.
\end{remark}

\section{Convergence of the dual regularization}\label{sec:conv}

In order to show that the dual regularization approach makes sense in the context of optimal control,
we study the convergence behavior of the optimal solutions to \eqref{eq:Peps} as the regularization parameter $\eps$ is driven to zero.
For this purpose, let $\varepsilon \searrow 0$ and denote the unique optimal solution to \eqref{eq:Peps} by $ u_\varepsilon$. 
If the existence of solutions is guaranteed by Assumption~\ref{assu:existence}\ref{it:ohneTik}, then we always take the same integrability exponent $q$ 
from Theorem~\ref{thm:existencewoTikhonov} for every value of $\varepsilon$. 
In order to unify the presentation, we moreover set 
\begin{equation}\label{eq:def_s}
    s := 
    \begin{cases}
        2, & \text{ in case of Assumption~\ref{assu:existence}\ref{it:mitTik}}, \\
        q < d/(d-1), & \text{ in case of Assumption~\ref{assu:existence}\ref{it:ohneTik}}.
    \end{cases}
\end{equation}

\begin{lemma}\label{lem: TV_weaklimit}
    Suppose that Assumption~\ref{assu:existence} is fulfilled such that \eqref{eq:Peps} admits a unique solution. 
	Let $\{ u_{\eps} \}_{\eps >0}$ denote a sequence of such solutions with $u_\eps \rightharpoonup \bar{u}$ in $L^s(\Omega)$ as $\eps \searrow 0$. 
	Then the weak limit $\bar{u}$ satisfies $\TV(\bar{u}) \leq 1$.
\end{lemma}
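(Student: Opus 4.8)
The plan is to exploit the lower semicontinuity of $\TV_\eps$ together with the monotonicity $\TV_{\eps_1} \le \TV_{\eps_2}$ for $\eps_1 \le \eps_2$ (Lemma~\ref{lem:TVeps}\ref{it:TVepsmono}) and the pointwise convergence $\TV_\eps(u) \to \TV(u)$ as $\eps \searrow 0$ (Lemma~\ref{lem:TVeps}\ref{it:TVepsconv}). The subtlety is that the functional whose lower semicontinuity we want to use, namely $\TV_\eps$, has its regularization parameter changing along the sequence, so a naive ``$\TV$ is lsc, pass to the limit'' does not directly apply. Instead, I would fix an auxiliary parameter $\delta > 0$ and compare $\TV_\delta(u_\eps)$ with $\TV_\eps(u_\eps)$ for all small $\eps$.

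Concretely, the key steps are as follows. First, note that weak convergence $u_\eps \rightharpoonup \bar u$ in $L^s(\Omega)$ implies, for every fixed $\varphi \in C_c^\infty(\Omega;\R^d)$, that $\int_\Omega u_\eps \,\div\varphi\,\d x \to \int_\Omega \bar u\,\div\varphi\,\d x$, since $\div\varphi \in L^{s'}(\Omega)$. Hence for fixed $\delta > 0$ and fixed feasible $\varphi$ (i.e.\ $\varphi \in C_c^\infty(\Omega;\R^d)$, $\|\varphi\|_\infty \le 1$), we get
\begin{equation*}
    F_\delta(\bar u,\varphi) = -\frac{\delta}{2}a[\varphi,\varphi] + \int_\Omega \bar u\,\div\varphi\,\d x = \lim_{\eps\searrow 0} F_\delta(u_\eps,\varphi) \le \liminf_{\eps\searrow 0} \TV_\delta(u_\eps).
\end{equation*}
Second, for $\eps \le \delta$, monotonicity (Lemma~\ref{lem:TVeps}\ref{it:TVepsmono}) gives $\TV_\delta(u_\eps) \le \TV_\eps(u_\eps) \le 1$, since $u_\eps$ is feasible for \eqref{eq:Peps}. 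Therefore $F_\delta(\bar u,\varphi) \le 1$ for every feasible $\varphi$, and taking the supremum over such $\varphi$ yields $\TV_\delta(\bar u) \le 1$. Third, since this holds for every $\delta > 0$, we let $\delta \searrow 0$ and invoke $\TV_\delta(\bar u) \to \TV(\bar u)$ from Lemma~\ref{lem:TVeps}\ref{it:TVepsconv} (applicable because $\bar u \in L^s(\Omega) \subset L^1(\Omega)$, as $\Omega$ is bounded) to conclude $\TV(\bar u) \le 1$.

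The main obstacle — really the only place requiring care — is the order of limits: we must take $\eps \searrow 0$ first (with $\delta$ fixed) to pass the weak limit through the linear term $\int_\Omega u_\eps\,\div\varphi\,\d x$, and only afterwards take $\delta \searrow 0$. The monotonicity step is what makes the $\eps \searrow 0$ limit harmless, converting the feasibility bound $\TV_\eps(u_\eps) \le 1$ into the usable bound $\TV_\delta(u_\eps) \le 1$ valid for all $\eps \le \delta$. One should also double-check the trivial point that weak convergence in $L^s(\Omega)$ suffices to test against $\div\varphi$; this is immediate since $\varphi$ has compact support and is smooth, so $\div\varphi \in L^\infty(\Omega) \subset L^{s'}(\Omega)$. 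No compactness or strong convergence of $\{u_\eps\}$ is needed, which is consistent with the statement only claiming an estimate on the weak limit whenever such a limit happens to exist.
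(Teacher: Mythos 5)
Your proposal is correct and follows essentially the same route as the paper's proof: fix an auxiliary parameter, use monotonicity plus feasibility to get $\TV_\delta(u_\eps)\le 1$ for $\eps\le\delta$, pass to the weak limit at fixed $\delta$, and then send $\delta\searrow 0$. The only cosmetic differences are that you verify the weak lower semicontinuity of $\TV_\delta$ directly by testing with fixed $\varphi$ (where the paper cites Lemma~\ref{lem:weaklyclosed}) and you conclude via Lemma~\ref{lem:TVeps}\ref{it:TVepsconv} instead of letting the term $\frac{\eps}{2}a[\varphi,\varphi]$ vanish explicitly.
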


\begin{proof}
	Let $\eps_1 >0$ be fixed but arbitrary.
	Then, for each $\eps_2>0$ with $\eps_2 < \eps_1$, there holds $\TV_{\eps_1}(u_{\eps_2}) \leq \TV_{\eps_2}(u_{\eps_2}) \leq 1$ by the monotonicity 
	according to Lemma~\ref{lem:TVeps}\ref{it:TVepsmono} and the feasibility of $u_{\eps_2}$ for (P$_{\eps_2}$).
	By Lemma~\ref{lem:weaklyclosed}, $\TV_{\eps_1}(\cdot)$ is convex and lower semicontinuous, and thus weakly lower semicontinuous. Therefore, we obtain
	\begin{align*}
		\TV_{\eps_1}(\bar{u})
		\leq \liminf_{\eps \searrow 0} \TV_{\eps_1}(u_\eps)
		\leq \limsup_{\eps \searrow 0} \TV_{\eps_1}(u_\eps) \leq 1.
	\end{align*}
	Since $\eps_1 > 0$ was arbitrary, there holds $\TV_\eps(\bar{u}) \leq 1$ for each $\eps >0$.
	Thus, we have for every $\varphi \in C^\infty_c(\Omega;\R^d)$ with $\|\varphi\|_\infty \leq 1$ that 
	\begin{align*}
		\int_\Omega \bar{u} \,\div \varphi\, \d x        
		\leq \TV_\eps(\bar{u})+ \frac{\eps}{2}\, a[\varphi, \varphi]
		\leq 1 + \frac{\eps}{2}\, a[\varphi, \varphi] \to 1
	\end{align*}
	as $\eps \searrow 0$ and by supremizing over all $\phi \in C^\infty_c(\Omega;\R^d)$ with $\| \phi \|_\infty \leq 1$, we obtain that $\TV(\bar{u}) \leq 1$ as claimed.
\end{proof}

Now we can argue exactly as in the proof of \Cref{thm:conveps} to show the following:

\begin{theorem}\label{thm:convTV}
    Let Assumption~\ref{assu:existence} be satisfied and denote by $\{u_\varepsilon\}_{\varepsilon>0}$ the sequence of the unique optimal solutions to \eqref{eq:Peps} 
    for $\varepsilon \searrow 0$. Then the whole sequence $\{u_\varepsilon\}_{\eps > 0}$ converges weakly in $L^s(\Omega)$ to the unique minimizer of \eqref{eq:P}. 
    If $\alpha > 0$, then the sequence converges strongly in $L^2(\Omega)$.
\end{theorem}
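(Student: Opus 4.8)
The plan is to follow the standard $\Gamma$-convergence-type argument for minimization problems with a moving constraint, combining an \emph{a priori} bound with a recovery-sequence construction. First I would establish that every sequence of minimizers $\{u_\varepsilon\}$ is bounded in $L^s(\Omega)$, so that it admits a weakly convergent subsequence; the limit candidate is denoted $\bar u$. In the case $\alpha>0$ ($s=2$), boundedness comes from comparing $J(u_\varepsilon)$ with the value $J(v)$ at any fixed feasible $v$ for \eqref{eq:P} (such $v$ is feasible for \eqref{eq:Peps} by Lemma~\ref{lem:TVeps}\ref{it:TVepsleqTV}): since $j$ is bounded below by $\kappa$, the Tikhonov term $\frac\alpha2\|u_\varepsilon-u_d\|_{L^2(\Omega)}^2$ is bounded, hence so is $\|u_\varepsilon\|_{L^2(\Omega)}$. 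In the case of Assumption~\ref{assu:existence}\ref{it:ohneTik}, the bound in $L^q(\Omega)/\R$ is furnished directly by Proposition~\ref{prop:Lqest} (using $\TV_\varepsilon(u_\varepsilon)\le1$ and $\varepsilon\le1$ for small $\varepsilon$), and the boundedness of the mean component follows from the radial-unboundedness hypothesis \eqref{eq:jcoercive} exactly as in \eqref{eq:statebound} in the proof of Theorem~\ref{thm:existencewoTikhonov}.

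Next, along any weakly convergent subsequence $u_\varepsilon\rightharpoonup\bar u$ in $L^s(\Omega)$, I would verify that $\bar u$ is feasible for \eqref{eq:P}: this is precisely Lemma~\ref{lem: TV_weaklimit}, which gives $\TV(\bar u)\le1$ (and $\bar u\in\BV(\Omega)\cap\UU$ since $\TV(\bar u)<\infty$ and $\bar u\in L^s(\Omega)\hookrightarrow L^1(\Omega)$). Then I would show optimality of $\bar u$. For the lower bound, $J$ is weakly lower semicontinuous on $L^s(\Omega)$: the term $j(S(\cdot+f))$ is convex and lower semicontinuous (composition of the continuous linear $S$ with the convex lsc $j$), hence weakly lsc, and the Tikhonov term is convex and continuous, hence weakly lsc as well; therefore $J(\bar u)\le\liminf_{\varepsilon\searrow0}J(u_\varepsilon)$. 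For the upper bound I use that any fixed $v$ feasible for \eqref{eq:P} is feasible for every \eqref{eq:Peps}, so $J(u_\varepsilon)\le J(v)$ for all $\varepsilon$, whence $\liminf_\varepsilon J(u_\varepsilon)\le\inf\eqref{eq:P}$. Combining, $J(\bar u)\le\inf\eqref{eq:P}$ while $\bar u$ is feasible, so $\bar u$ is a minimizer of \eqref{eq:P}; by Corollary~\ref{cor:exP} the minimizer $u^*$ is unique, so $\bar u=u^*$. A standard subsequence-of-subsequences argument then promotes this to convergence of the whole sequence.

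Finally, in the case $\alpha>0$ I would upgrade weak to strong convergence in $L^2(\Omega)$. From the chain $J(u^*)\le\liminf_\varepsilon J(u_\varepsilon)\le\limsup_\varepsilon J(u_\varepsilon)\le J(u^*)$ we get $J(u_\varepsilon)\to J(u^*)$. Since $S u_\varepsilon\rightharpoonup S u^*$ in $H^1_0(\Omega)$ (weak continuity of the linear operator $S$) and $j$ is weakly lsc, $\liminf_\varepsilon j(S(u_\varepsilon+f))\ge j(S(u^*+f))$; subtracting this from $J(u_\varepsilon)\to J(u^*)$ yields $\limsup_\varepsilon \frac\alpha2\|u_\varepsilon-u_d\|_{L^2}^2\le\frac\alpha2\|u^*-u_d\|_{L^2}^2$. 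Together with weak lower semicontinuity of the norm this gives $\|u_\varepsilon-u_d\|_{L^2}\to\|u^*-u_d\|_{L^2}$, and norm convergence plus weak convergence in the Hilbert space $L^2(\Omega)$ implies $u_\varepsilon-u_d\to u^*-u_d$ strongly, i.e.\ $u_\varepsilon\to u^*$ in $L^2(\Omega)$.

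I expect the only genuinely delicate point to be the \emph{a priori} bound in the coercive-without-Tikhonov case: one must carefully split $u_\varepsilon=u_\varepsilon^0+c_\varepsilon$, bound $u_\varepsilon^0$ via Proposition~\ref{prop:Lqest}, and then feed the resulting state bound $R$ into \eqref{eq:jcoercive} with $\hat y=S\eins$ to control $c_\varepsilon$ — this is routine given the earlier results but is the one step that is not a one-liner. Everything else (weak lsc of the reduced objective, feasibility of the limit, the uniqueness bootstrap, and the Hilbert-space strong-convergence trick) is standard. Since the excerpt explicitly says ``we can argue exactly as in the proof of \Cref{thm:conveps}'', I would in the write-up simply transcribe that argument with $\TV_\eps$ replaced by $\TV$ and $u_\eps$ by the corresponding objects, citing Lemma~\ref{lem: TV_weaklimit} where Lemma (the analogous one for $\TV_\eps$) was used.
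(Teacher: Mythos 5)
Your proposal is correct and follows essentially the same route as the paper's proof: a priori boundedness (via a feasible comparison point for $\alpha>0$, and via Proposition~\ref{prop:Lqest} plus \eqref{eq:jcoercive} for the coercive case), feasibility of the weak limit through Lemma~\ref{lem: TV_weaklimit}, optimality by weak lower semicontinuity combined with Lemma~\ref{lem:TVeps}\ref{it:TVepsleqTV}, the Urysohn argument for the whole sequence, and the separate convergence of the Tikhonov term to upgrade to strong $L^2$-convergence. The only cosmetic difference is that the paper takes $u\equiv 0$ as the comparison point instead of an arbitrary feasible $v$, which changes nothing.
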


\begin{proof}		
    First of all, the sequence $\{u_\varepsilon\}_{\varepsilon>0}$ is bounded in $L^s(\Omega)$. In case of $\alpha > 0$, this follows 
    directly from the boundedness of the objective, which is a consequence of the feasibility of $u \equiv 0$ for \eqref{eq:Peps} for all $\varepsilon > 0$:
	\begin{equation}\label{eq:ubound}
	\begin{aligned}
        \| u_\eps \|_{L^2(\Omega)} 
		& \leq \| u_\eps - u_d \|_{L^2(\Omega)} + \| u_d \|_{L^2(\Omega)} \\
        & \leq\sqrt{ \frac{2}{\alpha} (J(u_\eps) - \kappa)} + \| u_d \|_{L^2(\Omega)} \leq \sqrt{\frac{2}{\alpha} (J(0) - \kappa)} + \| u_d \|_{L^2(\Omega)} < \infty,
	\end{aligned}
	\end{equation}
	where $\kappa$ is the lower bound of the state-dependent part $j$ of the objective. 
    In case $\alpha = 0$, where the existence is guaranteed by Assumption~\ref{assu:existence}\ref{it:ohneTik}, the boundedness of $\{u_\varepsilon\}_{\varepsilon>0}$
    in $L^q(\Omega)$ follows from the constraint $\TV_\varepsilon(u)\leq 1$ as in the proof of Theorem~\ref{thm:existencewoTikhonov}: 
    While the part $u_\varepsilon^0$ with vanishing mean value is bounded in $L^q(\Omega)$ by Proposition~\ref{prop:Lqest}, the mean value $\bar u_\varepsilon$
    is again bounded by the coercivity assumption on $j$ in \eqref{eq:jcoercive}, which is seen as follows:  
    Since $u \equiv 0$ is feasible for \eqref{eq:Peps} for all $\varepsilon > 0$, there holds
    \begin{equation}\label{eq:jbound}
        j(S(u^0_\varepsilon + \bar u_\varepsilon  + f)) \leq j(S f) < \infty \quad  \forall\, \varepsilon > 0.
    \end{equation}       	
	Moreover, analogously to \eqref{eq:statebound}, we find
	\begin{equation*}
	    \| S (u_\varepsilon^0 + f)\|_{H^1_0(\Omega)} 
        \leq C (\TV\varepsilon(u) + \varepsilon)  +  \| S f\|_{H^1_0(\Omega)} 
        \leq 2\,C +  \| S f\|_{H^1_0(\Omega)} =: R \quad \forall\,\varepsilon \in (0,1].
	\end{equation*}
    Thus, the coercivity condition in \eqref{eq:jcoercive} with $\hat y = S\eins$ implies that $\bar u_\varepsilon$ must be bounded, too, since 
    otherwise $j(S(u^0_\varepsilon + \bar u_\varepsilon  + f))$ would tend to infinity contradicting \eqref{eq:jbound}.
	
	Therefore, the sequence $\{u_\eps\}_{\eps >0}$ admits a weakly converging subsequence in $L^s(\Omega)$, which we denote by the 
	same symbol for simplicity, that is, $u_\varepsilon \rightharpoonup u^*$ in $L^s(\Omega)$ for some $u^* \in L^s(\Omega)$. 
	By \cref{lem: TV_weaklimit}, there holds $\TV(u^*) \leq 1$ such that $u^*$ is feasible for the limit problem \eqref{eq:P}.
	To show its optimality, let $u \in \BV(\Omega)$ with $\TV(u)\leq 1$ be arbitrary. Then, 
	by Lemma~\ref{lem:TVeps}\ref{it:TVepsleqTV}, we obtain that $\TV_\eps(u) \leq 1$, i.e., $u$ is also feasible for \eqref{eq:Peps} for each $\eps >0$. 
    Together with the weak lower semicontinuity of $J$, this implies
    \begin{equation}\label{eq:convobj}
		J(u^*) \leq \liminf_{\eps \searrow 0} J(u_\eps) \leq \limsup_{\eps \searrow 0} J(u_\eps) \leq J(u),
    \end{equation}    	
	which yields the optimality of $u^*$. Since \eqref{eq:P} is uniquely solvable, the Urysohn subsequence principle implies the weak convergence of 
	the whole subsequence as claimed.
	
    To show the strong convergence in $L^2(\Omega)$ in case of $\alpha > 0$, first observe that, again by Lemma~\ref{lem:TVeps}\ref{it:TVepsleqTV},
    $u^*$ is feasible for \eqref{eq:Peps} for every $\varepsilon > 0$. 	Therefore, \eqref{eq:convobj} also holds with $u = u^*$, which implies the 
    convergence of the objective, i.e., 
    \begin{equation*}
         j\big(S (u_\varepsilon + f) \big) + \frac{\alpha}{2}\, \| u_\varepsilon - u_d \|_{L^2(\Omega)}^2 
         \to  j\big(S (u^* + f) \big) + \frac{\alpha}{2}\, \| u^* - u_d \|_{L^2(\Omega)}^2 
    \end{equation*}
    Since both components, i.e., $u \mapsto j(S (u_\varepsilon + f))$	and $u\mapsto \frac{\alpha}{2}\, \| u - u_d \|_{L^2(\Omega)}^2 $, 
    are weakly lower semicontinuous by our assumptions on $j$ and the linearity of $S$, this implies that 
    they converge separately, too, i.e., in particular
    \begin{equation*}
        \frac{\alpha}{2}\,  \| u_\varepsilon - u_d \|_{L^2(\Omega)}^2 \to \frac{\alpha}{2}\, \| u^* - u_d \|_{L^2(\Omega)}^2, 
    \end{equation*}
    which in turn implies that $\|u_\varepsilon\|_{L^2(\Omega)}$ converges to  $\|u^*\|_{L^2(\Omega)}$. 
    Since weak and norm convergence imply strong convergence, this finishes the proof.
\end{proof}

\subsection{Convergence rate under additional assumptions}

In the following, we will derive a convergence w.r.t.\ the regularization parameter $\varepsilon$ under the following additional assumption on the objective in \eqref{eq:P}:

\begin{assumption}\label{assu:Lsmooth}
    We assume the following conditions in addition to our standing assumptions:
    \begin{enumerate}[label={\normalfont(\alph*)}]
        \item $\alpha > 0$.
        \item\label{it:Lsmooth} The state-dependent part $j$ of the objective is G\^ateaux-differentiable from $H^1_0(\Omega)$ to $\R$ 
        and its G\^ateaux-derivative is Lipschitz continuous with Lipschitz constant $L_j$, i.e., 
        \begin{equation*}
            \| j'(y_1) - j'(y_2)\|_{H^{-1}(\Omega)} \leq L_j \, \| y_1 - y_2\|_{H^1_0(\Omega)} 
            \quad \forall\, y_1, y_2 \in H^1_0(\Omega).
        \end{equation*}
     \end{enumerate}        
\end{assumption}

\begin{remark}
    It is readily verified that the two prominent examples from Remark~\ref{rem:tracking} satisfy Assumption~\ref{assu:Lsmooth}\ref{it:Lsmooth}.
\end{remark}

Under Assumption~\ref{assu:Lsmooth} the objective $J(u) = j(S(u+f)) + \frac{\alpha}{2}\, \|u - u_d\|_{L^2(\Omega)}^2$ is G\^ateaux differentiable 
from $L^2(\Omega)$ to $\R$ and its derivative is Lipschitz continuous with Lipschitz constant 
\begin{equation*}
    L_J := \| S \|_{\LL(L^2(\Omega), H^1_0(\Omega))}^2 \, L_j + \alpha.
\end{equation*}
Therefore, by \cite[Theorem~7.3]{CV20}, its Fenchel conjugate $J^* : L^2(\Omega) \to \overline{\R}$ is strongly convex with constant $L_J^{-1}$.
Moreover, as a conjugate functional, $J^*$ is lower semicontinuous and, due to $\alpha > 0$, there holds that $\dom(J^*) = L^2(\Omega)$, 
i.e., $J^*$ is finite.

\begin{lemma}\label{lem:barPhi}
    Under Assumption~\ref{assu:Lsmooth} the minimization problem 
    \begin{equation}\tag{PD$_\infty$}\label{eq:priminf}
        \left.
        \begin{aligned}
            \min \quad & J^*(\div\Phi) + \|\Phi\|_\infty \\
            \text{s.t.} \quad &  \Phi \in H_0(\div;\Omega) \cap L^\infty(\Omega;\R^d)   
        \end{aligned}
        \quad \right\}
    \end{equation}
    admits a solution $\bar\Phi \in H_0(\div;\Omega) \cap L^\infty(\Omega;\R^d)$.
\end{lemma}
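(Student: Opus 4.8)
The plan is to apply the direct method of the calculus of variations to the minimization problem \eqref{eq:priminf}. First I would check that the problem is non-trivial, i.e., that the objective $G(\Phi) := J^*(\div\Phi) + \|\Phi\|_\infty$ is proper and bounded below: since $J^*$ is finite on all of $L^2(\Omega)$ by the discussion preceding the lemma (as $\alpha>0$ forces $\dom(J^*)=L^2(\Omega)$), choosing $\Phi \equiv 0$ gives $G(0) = J^*(0) < \infty$, so $\inf G < \infty$. For the lower bound, strong convexity of $J^*$ with constant $L_J^{-1}$ (hence $J^*$ has an affine minorant, being in particular bounded below on bounded sets, but more usefully one gets a quadratic growth bound $J^*(v) \ge J^*(v_0) + \langle \xi_0, v - v_0\rangle + \tfrac{1}{2L_J}\|v-v_0\|_{L^2}^2$ for a subgradient $\xi_0$ at some point $v_0$) together with $\|\Phi\|_\infty \ge 0$ shows $G$ is bounded below; in fact $G(\Phi) \to \infty$ whenever $\|\div\Phi\|_{L^2(\Omega)} \to \infty$.

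Next I would take an infimizing sequence $\{\Phi_n\} \subset H_0(\div;\Omega) \cap L^\infty(\Omega;\R^d)$. The term $\|\Phi_n\|_\infty$ being part of the objective gives boundedness of $\{\Phi_n\}$ in $L^\infty(\Omega;\R^d)$ and hence in $L^2(\Omega;\R^d)$; the coercivity of $J^*$ in $\div\Phi$ just noted gives boundedness of $\{\div\Phi_n\}$ in $L^2(\Omega)$. Together these bound $\{\Phi_n\}$ in $H(\div;\Omega)$, so after passing to a subsequence we may assume $\Phi_n \rightharpoonup \bar\Phi$ in $H(\div;\Omega)$ and, using the $L^\infty$ bound, $\Phi_n \weakstarto \bar\Phi$ in $L^\infty(\Omega;\R^d)$ as well (the two limits agree). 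Since $H_0(\div;\Omega)$ is a closed subspace of $H(\div;\Omega)$, it is weakly closed, so $\bar\Phi \in H_0(\div;\Omega) \cap L^\infty(\Omega;\R^d)$, and the weak limit is feasible.

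Finally I would verify weak lower semicontinuity of $G$ along this sequence. The norm $\Phi \mapsto \|\Phi\|_\infty$ is convex and strongly lower semicontinuous on $L^\infty(\Omega;\R^d)$, hence weak-$*$ lower semicontinuous (a convex strongly closed set in $L^\infty$ that is also weak-$*$ closed — one should note $\{\|\Phi\|_\infty \le r\}$ is the polar of the unit ball of $L^1$ and hence weak-$*$ closed by Banach–Alaoglu, so this is genuinely fine). For the other term: the linear map $\Phi \mapsto \div\Phi$ sends the weak convergence in $H(\div;\Omega)$ to weak convergence of $\div\Phi_n \rightharpoonup \div\bar\Phi$ in $L^2(\Omega)$, and $J^*$, being lower semicontinuous and convex on $L^2(\Omega)$, is weakly lower semicontinuous there; composing gives $\liminf_n J^*(\div\Phi_n) \ge J^*(\div\bar\Phi)$. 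Adding the two estimates yields $G(\bar\Phi) \le \liminf_n G(\Phi_n) = \inf G$, so $\bar\Phi$ is a minimizer.

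The main obstacle is the careful handling of the $\|\cdot\|_\infty$ term: one must be sure the correct topology (weak-$*$ in $L^\infty$, identified with the weak limit coming from $H(\div;\Omega)$) makes this functional lower semicontinuous and that the infimizing sequence is genuinely bounded in $L^\infty$ — both follow because $\|\Phi\|_\infty$ sits inside the objective, but the identification of the two weak limits (in $H(\div)$ and weak-$*$ in $L^\infty$) should be spelled out. The coercivity of $J^*$ in the $\div\Phi$ variable, needed to bound $\{\div\Phi_n\}$ in $L^2$, is the other point that deserves an explicit (short) argument from strong convexity.
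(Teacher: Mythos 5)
Your proposal is correct and follows essentially the same route as the paper: coercivity (radial unboundedness) of the objective from the strong convexity of $J^*$ via a subgradient inequality, boundedness of a minimizing sequence in $H_0(\div;\Omega)$ and $L^\infty(\Omega;\R^d)$, and the direct method with weak/weak-$*$ lower semicontinuity of both terms. The extra details you spell out (identification of the two weak limits, weak closedness of $H_0(\div;\Omega)$, weak-$*$ lower semicontinuity of the sup-norm) are left implicit in the paper but are consistent with its argument.
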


\begin{proof}
    Let us denote the objective in \eqref{eq:priminf} by $q$ and fix an arbitrary point $\Phi_0 \in H_0(\div;\Omega) \cap L^\infty(\Omega;\R^d)$. Since $J^*$ is strongly convex, 
    there holds that 
    \begin{equation}\label{eq:dualobjradunbound}
        q(\Phi) \geq q(\Phi_0) + \int_\Omega g \, \div (\Phi - \Phi_0) + \frac{1}{2L_J} \|\div \Phi - \div \Phi_0\|_{L^2}^2 + \|\Phi\|_\infty
        \quad\forall\, \Phi \in H_0(\div;\Omega) \cap L^\infty(\Omega;\R^d),
    \end{equation}
    where $g\in L^2(\Omega)$ is an arbitrary element of $\partial J^*(\div \Phi_0)$. Note that the chain rule for convex subdifferentials is applicable, since 
    $\dom(J^*) = L^2(\Omega)$. Since the right hand side of \eqref{eq:dualobjradunbound} is radially unbounded on $H_0(\div;\Omega) \cap L^\infty(\Omega;\R^d)$, 
    so is $q$. Thus, w.l.o.g., we can assume that every minimizing sequence admits a subsequence converging weakly in $H_0(\div;\Omega)$ and 
    weakly-* in $L^\infty(\Omega;\R^d)$. Since the objective is lower semicontinuous w.r.t.\ this convergence (as $J^*$ is convex and lower semicontinuous),
    the classical direct method of calculus of variations gives the existence of a minimizer. 
\end{proof}

We now consider problem \eqref{eq:priminf} in the space $X := H_0(\div;\Omega) \cap C_0(\Omega;\R^d)$, i.e., 
\begin{equation}\tag{PD$_C$}\label{eq:primC}
    \left.
    \begin{aligned}
        \inf \quad & J^*(\div\Phi) + \|\Phi\|_\infty \\
        \text{s.t.} \quad &  \Phi \in X = H_0(\div;\Omega) \cap C_0(\Omega;\R^d),
    \end{aligned}
    \quad \right\}
\end{equation}
and show that it is the pre-dual problem to \eqref{eq:P}. Note that, in general, \eqref{eq:primC} will not admit a solution.
We start with the Fenchel conjugate of $F: X \ni \Phi \mapsto \|\Phi\|_\infty \in \R$, which reads
\begin{equation*}
    F^*(\omega) = \sup_{\Phi \in X} \big( \dual{\omega}{\Phi}_{X^*, X} -  \|\Phi\|_\infty \big) 
    = \begin{cases}
        0, & \sup_{\Phi\in X, \|\Phi\|_\infty = 1} \dual{\omega}{\Phi}_{X^*, X}  \leq 1, \\
        \infty, & \text{else.}
    \end{cases}
\end{equation*}
Now, consider an element $\omega \in X^*$ with $F^*(\omega) < \infty$. Then the above implies for all $\Phi \in X$ that 
\begin{equation*}
    \dual{\omega}{\Phi}_{X^*, X}  \leq \|\Phi\|_\infty
\end{equation*}
and consequently, by the Hahn-Banach theorem, $\omega$ can be extended to a functional on $C_0(\Omega;\R^d)$.
Therefore, for every $\omega \in X^*$ with $F^*(\omega) < \infty$, there exists $\hat\omega \in  \frakM(\Omega;\R^d)$ such that 
\begin{equation}\label{eq:hatomega}
    \dual{\omega}{\Phi}_{X^*, X} = \int_\Omega \Phi \cdot \d \hat\omega \quad \forall \, \Phi \in X, \quad 
    |\hat\omega|(\Omega) = \|\hat\omega\|_{\frakM} \leq 1.
\end{equation}
On the other hand, let $\omega \in X^*$ be given such that \eqref{eq:hatomega} holds with some $\hat\omega \in  \frakM(\Omega;\R^d)$. Then it follows that 
\begin{equation*}
    \sup_{\Phi\in X, \|\Phi\|_\infty = 1} \dual{\omega}{\Phi}_{X^*, X} 
    =  \sup_{\Phi\in X, \|\Phi\|_\infty = 1} \int_\Omega \Phi \cdot \d \hat\omega \leq |\hat\omega|(\Omega) \leq 1
\end{equation*}
and thus $F^*(\omega) = 0 < \infty$. This shows that $F^*(\omega)$ is finite if and only if $\hat\omega \in  \frakM(\Omega;\R^d)$ exists such that \eqref{eq:hatomega}
holds and thus $F^*$ is the indicator functional of the 1-ball in $\frakM(\Omega;\R^d)$ as expected, i.e., 
\begin{equation}\label{eq:infnormdual}
    F^*(\omega) = 
    \begin{cases}
        0, & \exists\, \hat\omega \in  \frakM(\Omega;\R^d) \colon  \dual{\omega}{\Phi}_{X^*, X} = \int_\Omega \Phi \cdot \d \hat\omega \;\; \forall \, \Phi \in X, \;\;
        |\hat\omega|(\Omega) \leq 1,\\
        \infty, & \text{else.}
    \end{cases}
\end{equation}
Since $J^*$ is convex and finite, it is locally Lipschitz continuous everywhere. The continuity of $J^*$ and $F$ allows to apply the duality theorem from 
\cite[Theorem~III.4.1]{ET76} which gives
\begin{equation}\label{eq:fenchelrock}
    \inf_{\Phi\in X} \; F(\Phi) + J^*(\div\Phi) = \max_{u \in L^2(\Omega)} \big( - F^*(\nabla u) - J(u)\big),
\end{equation}
where $\nabla = (-\div)^* : L^2(\Omega) \to H_0(\div;\Omega)^* \subset X^* = H_0(\div;\Omega)^* + \frakM(\Omega;\R^d)$ denotes the distributional gradient.
Thanks to \eqref{eq:infnormdual}, the maximization problem in \eqref{eq:fenchelrock} is equivalent to 
\begin{equation}\label{eq:fenchel0}
    - \max_{u \in L^2(\Omega)} \big( - F^*(\nabla u) - J(u)\big) 
    = \min \{ J(u) \colon u \in L^2(\Omega), \; \nabla u \in \frakM(\Omega;\R^d), \; |\nabla u|(\Omega) \leq 1\},
\end{equation}
which is just the optimal control problem \eqref{eq:P}. 
Therefore, in view of \eqref{eq:fenchelrock}, \eqref{eq:primC} is indeed the pre-dual problem to \eqref{eq:P}. 
We point out that this is a classical observation and all but new, we only refer to \cite[Section~2.2]{CK11} for a similar setting.

Next we turn to the dual problem of \eqref{eq:Peps}. For this purpose, let us define the convex set 
\begin{equation*}
   \CC_\varepsilon := \Big\{\mu \in \HH^* \colon \dual{-\mu}{\varphi} - \frac{\varepsilon}{2}\, a[\varphi, \varphi] \leq 1
   \quad \forall\, \varphi \in C^\infty_c(\Omega;\R^d), \; \| \varphi\|_\infty \leq 1 \Big\}
\end{equation*}
and rewrite \eqref{eq:Peps} as 
\begin{equation*}
    \eqref{eq:Peps}
    \quad \Longleftrightarrow \quad 
    \min_{u\in L^2(\Omega)} \; J(u) + I_{\CC_\varepsilon}(\nabla u),
\end{equation*}
where $\nabla : L^2(\Omega) \to H_0(\div;\Omega)^* \embed \HH^*$ again denotes the distributional gradient. 
Moreover, $I_{\CC_\varepsilon}$ denotes the indicator functional of the set $\CC_\varepsilon$.
We prove that $u \equiv 0$ is a Slater point for this problem. 
To this end, let $\mu \in \HH^*$ with $\|\mu\|_{\HH^*} \leq \sqrt{2\varepsilon\beta}$ be arbitrary, where $\beta > 0$ denotes the coercivity constant of $a$, see \eqref{eq:acoer}.
Then, for all $\varphi \in \HH$, Young's inequality implies that
\begin{equation*}
    \dual{-\mu}{\varphi} - \frac{\varepsilon}{2}\, a[\varphi, \varphi]
    \leq \|\mu\|_{\HH^*} \|\varphi\|_{\HH} - \frac{\varepsilon}{2} \,\beta \, \|\varphi\|_{\HH}^2 
    \leq  1
\end{equation*}
and consequently $B_{\HH^*}(0,\sqrt{2\varepsilon\beta})\subset \CC_\varepsilon$. Hence $u\equiv 0$ is a Slater point for \eqref{eq:Peps}
and the Fenchel-Rockafellar duality, cf.\ e.g.\ \cite[Theorem~5.10]{CV20}, yields that 
\begin{equation}\label{eq:fencheleps}
     \min_{u\in L^2(\Omega)} \; J(u) + I_{\CC_\varepsilon}(\nabla u)
     = \max_{\Phi \in \HH} \; - J^*(\div \Phi) - I_{\CC_\varepsilon}^*(\Phi),
\end{equation}
where we identified $\HH^{**}$ with $\HH$ by means of the Riesz isomorphism. A precise characterization of the support functional $I_{\CC_\varepsilon}^*$ is rather intricate, 
but, fortunately, we do not need it for our convergence result, as the estimate derived in the following is sufficient for the upcoming analysis.
To this end, let $\Phi \in \HH \cap L^\infty(\Omega;\R^d)$, $\Phi \neq 0$, be arbitrary. Then Assumption~\ref{assu:density} implies that, 
for all $\mu \in \CC_\varepsilon$, there holds that
\begin{equation*}
    \dual{\mu}{\Phi}
    = \|\Phi\|_\infty \, \Big\langle - \mu, - \frac{\Phi}{\|\Phi\|_\infty} \Big\rangle 
    \leq \|\Phi\|_\infty +  \frac{\varepsilon}{2} \, \frac{1}{\|\Phi\|_\infty} \, a[\Phi, \Phi] 
\end{equation*}
and thus 
\begin{equation}\label{eq:indicatordual}
    I^*_{\CC_\varepsilon}(\Phi) 
    =  \sup_{\mu \in \CC_\varepsilon} \dual{\mu}{\Phi} \leq \|\Phi\|_\infty +  \frac{\varepsilon}{2} \, \frac{1}{\|\Phi\|_\infty} \, a[\Phi, \Phi] .
\end{equation}
Now, we have everything at hand to prove the following a priori estimate:

\begin{theorem}\label{thm:convrate}
    Let Assumption~\ref{assu:Lsmooth} be satisfied and assume that 
    \begin{equation}\label{eq:HHHdiv}
        \HH = H_0(\div;\Omega).
    \end{equation}        
    Denote the optimal solutions to \eqref{eq:P} and \eqref{eq:Peps} 
    by $u^*$ and $u_\varepsilon$, respectively, (which are unique as $\alpha > 0$). Then there is a constant $C > 0$ such that  
    \begin{equation}\label{eq:convrate}
        0 \leq J(u^*) - J(u_\varepsilon) \leq C \, \varepsilon \quad \text{and} \quad 
        \| u^* - u_\varepsilon \|_{L^2(\Omega)} \leq C\, \sqrt{\varepsilon}.
    \end{equation}
\end{theorem}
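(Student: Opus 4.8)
The plan is to exploit the two Fenchel dualities established right before the theorem, namely
\[
J(u^*) = -\inf_{\Phi\in X} \big( F(\Phi) + J^*(\div\Phi)\big), \qquad
J(u_\varepsilon) = -\max_{\Phi \in \HH} \big( {-}J^*(\div\Phi) - I^*_{\CC_\varepsilon}(\Phi)\big),
\]
and to use the minimizer $\bar\Phi$ of \eqref{eq:priminf} from Lemma~\ref{lem:barPhi} as a competitor in both. Since $\HH = H_0(\div;\Omega)$ by \eqref{eq:HHHdiv}, and since the objective of \eqref{eq:priminf} is radially unbounded (as shown in the proof of Lemma~\ref{lem:barPhi}), the minimizer $\bar\Phi$ actually lies in $H_0(\div;\Omega)\cap L^\infty(\Omega;\R^d) \subset \HH \cap L^\infty(\Omega;\R^d)$, so it is admissible on the dual side of \eqref{eq:fencheleps}. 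Plugging $\bar\Phi$ into \eqref{eq:fencheleps} and using the estimate \eqref{eq:indicatordual} for $I^*_{\CC_\varepsilon}$ gives
\[
-J(u_\varepsilon) \le -\Big( {-}J^*(\div\bar\Phi) - I^*_{\CC_\varepsilon}(\bar\Phi)\Big)
\le J^*(\div\bar\Phi) + \|\bar\Phi\|_\infty + \frac{\varepsilon}{2\,\|\bar\Phi\|_\infty}\, a[\bar\Phi,\bar\Phi].
\]
(If $\bar\Phi = 0$ then $u^* \equiv \mathrm{const}$ is already optimal for every $\varepsilon$ and the estimate is trivial; otherwise $\|\bar\Phi\|_\infty > 0$.) Since $\bar\Phi\in H_0(\div;\Omega)$ and $a$ is bounded with constant $\gamma$, we have $a[\bar\Phi,\bar\Phi] \le \gamma\|\bar\Phi\|_{\HH}^2 =: C_0 < \infty$, a constant independent of $\varepsilon$. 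Recalling $F(\bar\Phi) = \|\bar\Phi\|_\infty$ and that $\bar\Phi$ minimizes $F + J^*(\div\,\cdot)$ over $X$ — hence $J^*(\div\bar\Phi) + \|\bar\Phi\|_\infty = -J(u^*)$ by the pre-duality \eqref{eq:fenchel0}–\eqref{eq:fenchelrock} — we conclude
\[
-J(u_\varepsilon) \le -J(u^*) + C\,\varepsilon, \qquad\text{i.e.}\qquad J(u^*) - J(u_\varepsilon) \le C\,\varepsilon .
\]
The lower bound $J(u^*) - J(u_\varepsilon) \ge 0$ is immediate: $u^*$ is feasible for \eqref{eq:Peps} by Lemma~\ref{lem:TVeps}\ref{it:TVepsleqTV}, so $J(u_\varepsilon) \le J(u^*)$.

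**The rate in $L^2$.** For the second estimate I would use the strong convexity of $J^*$ with constant $L_J^{-1}$ together with the fact that $u_\varepsilon$ is a feasible competitor in the primal problem \eqref{eq:P} up to the regularization, combined with a Bregman/three-point argument. Concretely: strong convexity of $J$ itself (constant $\alpha$, from the Tikhonov term) is what I actually want — $u^*$ is the unique minimizer of $J$ over the feasible set $\{\TV(u)\le 1\}$ of \eqref{eq:P}, which is convex, and $u^*$ satisfies the variational inequality $J'(u^*)(u - u^*) \ge 0$ for all such $u$; in particular this does \emph{not} directly apply to $u_\varepsilon$ since $u_\varepsilon$ need not satisfy $\TV(u_\varepsilon)\le 1$. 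Instead, one shows
\[
\frac{\alpha}{2}\,\|u^* - u_\varepsilon\|_{L^2(\Omega)}^2 \le J(u_\varepsilon) - J(u^*) - J'(u^*)(u_\varepsilon - u^*),
\]
by strong convexity, and then one bounds the cross term $-J'(u^*)(u_\varepsilon - u^*)$. This is where the dual optimal variable $\bar\Phi$ re-enters: complementary slackness / the optimality conditions for \eqref{eq:P} give $-J'(u^*) = \nabla\cdot$ composed with $\bar\Phi$ in a suitable weak sense (equivalently $-J'(u^*) \in \partial I_{\{|\nabla\cdot|(\Omega)\le 1\}}$, realized by $\operatorname{div}\bar\Phi \in \partial(-J^*)^{-1}$), so that $-J'(u^*)(u_\varepsilon - u^*) = \langle \nabla(u_\varepsilon - u^*), \bar\Phi\rangle = \int_\Omega (u_\varepsilon - u^*)\operatorname{div}\bar\Phi\,\d x$; using $\TV_\varepsilon(u_\varepsilon)\le 1$ and $\TV(u^*)\le 1$ together with the definition of $\TV_\varepsilon$ and the scaling $\bar\Phi/\|\bar\Phi\|_\infty$ as a test field turns this into a bound of order $\varepsilon\, a[\bar\Phi,\bar\Phi] \le C\varepsilon$. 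Combining with the already-proven $J(u_\varepsilon) - J(u^*) \le 0$ we obtain $\tfrac{\alpha}{2}\|u^* - u_\varepsilon\|_{L^2(\Omega)}^2 \le C\varepsilon$, hence $\|u^*-u_\varepsilon\|_{L^2(\Omega)} \le C\sqrt{\varepsilon}$.

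**Main obstacle.** The delicate point is making the cross-term identity $-J'(u^*)(u_\varepsilon - u^*) = \int_\Omega(u_\varepsilon - u^*)\operatorname{div}\bar\Phi\,\d x$ rigorous and then estimating it by $C\varepsilon$: one must pass between the predual variable $\bar\Phi \in H_0(\div;\Omega)\cap L^\infty$ and the (possibly singular) subgradient of the TV-constraint indicator at $u^*$, invoking the subdifferential sum rule legitimized by the Slater point and the finiteness/continuity of $J^*$, and one must carefully use the inequality characterizations $\int (u_\varepsilon - u^*)\operatorname{div}\varphi\,\d x \le \TV_\varepsilon(u_\varepsilon) + \tfrac{\varepsilon}{2}a[\varphi,\varphi] - \TV(u^*) \le \tfrac{\varepsilon}{2}a[\varphi,\varphi]$ applied with $\varphi$ a rescaled smooth approximation of $\bar\Phi$ (using Assumption~\ref{assu:density} to approximate $\bar\Phi$ in $\HH$ while keeping $\|\varphi\|_\infty\le 1$). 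Everything else is bookkeeping: $a[\bar\Phi,\bar\Phi]\le\gamma\|\bar\Phi\|_{\HH}^2$ and $\|\bar\Phi\|_\infty$ are fixed $\varepsilon$-independent constants, so all error terms are genuinely $O(\varepsilon)$.
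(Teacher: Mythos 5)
Your first estimate follows the paper's own route: plug the \eqref{eq:priminf}-minimizer $\bar\Phi$ from Lemma~\ref{lem:barPhi} into the dual side of \eqref{eq:fencheleps}, bound $I^*_{\CC_\varepsilon}(\bar\Phi)$ by \eqref{eq:indicatordual}, and compare with the predual value of \eqref{eq:P} via \eqref{eq:fenchelrock}--\eqref{eq:fenchel0}. One small imprecision: you assert $J^*(\div\bar\Phi)+\|\bar\Phi\|_\infty=-J(u^*)$, but what is immediate is only $J^*(\div\bar\Phi)+\|\bar\Phi\|_\infty\le -J(u^*)$, since $\bar\Phi$ minimizes over $H_0(\div;\Omega)\cap L^\infty(\Omega;\R^d)\supset X$ (equality would already need the density argument from the remark following Theorem~\ref{thm:convrate}); as this is exactly the direction you use, the first part is fine.

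For the $L^2$ rate you genuinely depart from the paper, and there the proposal has a gap. The paper never touches first-order conditions at $u^*$: since $u^*$ and $u_\varepsilon$ are both feasible for \eqref{eq:Peps} and $\TV_\varepsilon$ is convex, the midpoint $\tfrac12(u^*+u_\varepsilon)$ is feasible, so $J(u_\varepsilon)\le J(\tfrac12(u^*+u_\varepsilon))$, and Clarkson's inequality for the Tikhonov term together with convexity of $j$ yields $\tfrac{\alpha}{4}\|u^*-u_\varepsilon\|_{L^2(\Omega)}^2\le J(u^*)+J(u_\varepsilon)-2J(\tfrac12(u^*+u_\varepsilon))\le J(u^*)-J(u_\varepsilon)\le C\varepsilon$ --- no dual certificate needed. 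You instead expand around $u^*$, the one point where no variational inequality is available for the competitor $u_\varepsilon$ (it need not satisfy $\TV(u_\varepsilon)\le1$), and so you must control the cross term via $\bar\Phi$. The two facts you then rely on, namely the representation $J'(u^*)=\div\bar\Phi$ (note the sign: with $\nabla=(-\div)^*$ the extremality relation gives $\div\bar\Phi=J'(u^*)$, so the cross term is $-\int_\Omega(u_\varepsilon-u^*)\div\bar\Phi\,\dd x$, not $+$) and the complementarity $-\int_\Omega u^*\,\div\bar\Phi\,\dd x=\|\bar\Phi\|_\infty$, are not available off the shelf: Theorem~\ref{thm:noc} provides a $C_0$-representative only when $\alpha=0$ or $d\le2$ (here $\alpha>0$ and $d$ may be $3,4$), and that representative need not have $L^2$-divergence; for the \eqref{eq:priminf}-minimizer these identities are the extremality relations of the extended pair \eqref{eq:priminf}/\eqref{eq:P}, which first require showing that enlarging $X$ to $H_0(\div;\Omega)\cap L^\infty(\Omega;\R^d)$ creates no duality gap --- precisely the \cite{HR15}-density argument that the paper mentions after Theorem~\ref{thm:convrate} and deliberately avoids. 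Once these relations are proved, your estimate does close (testing $\TV_\varepsilon(u_\varepsilon)\le1$ with $-\bar\Phi/\|\bar\Phi\|_\infty$ gives $-\int_\Omega u_\varepsilon\,\div\bar\Phi\,\dd x\le\|\bar\Phi\|_\infty+\tfrac{\varepsilon}{2\|\bar\Phi\|_\infty}a[\bar\Phi,\bar\Phi]$), so the route is viable but considerably heavier than necessary. If you wish to keep the strong-convexity framework, expand around $u_\varepsilon$ instead: the variational inequality for \eqref{eq:Peps} at $u_\varepsilon$ gives $J'(u_\varepsilon)(u^*-u_\varepsilon)\ge0$ because $u^*$ is feasible for \eqref{eq:Peps}, whence $\tfrac{\alpha}{2}\|u^*-u_\varepsilon\|_{L^2(\Omega)}^2\le J(u^*)-J(u_\varepsilon)$ in one line.
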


\begin{proof}
    First note that Assumption~\ref{assu:density} is satisfied because of \eqref{eq:HHHdiv}, cf.~Remark~\ref{rem:density}.
    We start with the estimate for the objective functional. Due to 
    \begin{equation*}
        \int_\Omega u^* \, \div \varphi \,\d x - \frac{\varepsilon}{2} \, a[\varphi, \varphi] 
        \leq \int_\Omega u^* \, \div \varphi \,\d x \leq \TV(u^*) \leq 1
        \quad \forall\, \varphi \in C^\infty_c(\Omega;\R^d) \colon \|\varphi\|_\infty \leq 1, 
    \end{equation*}
    the optimal solution $u^*$ is feasible for \eqref{eq:Peps} for every $\varepsilon > 0$. This yields 
    \begin{equation*}
    \begin{aligned}
        J(u_\varepsilon) 
        & \leq J(u^*)\\
        & = - \max_{u \in L^2(\Omega)} \big( - F^*(\nabla u) - J(u)\big)  & & \text{by \eqref{eq:fenchel0}}\\
        & = - \inf_{\Phi\in X} \; F(\Phi) + J^*(\div\Phi) & & \text{by \eqref{eq:fenchelrock}}\\
        & = \sup_{\Phi\in H_0(\div;\Omega) \cap C_0(\Omega;\R^d)} \; - J^*(\div\Phi)  - \|\Phi\|_\infty \\ 
        & \leq \sup_{\Phi\in H_0(\div) \cap L^\infty(\Omega;\R^d)} \; - J^*(\div\Phi)  - \|\Phi\|_\infty\\
        & \leq - J^*(\div \overline{\Phi} )  - \| \overline{\Phi} \|_\infty & & \text{by Lemma~\ref{lem:barPhi}}\\
        & \leq - J^*(\div\overline{\Phi})- I^*_{\CC_\varepsilon}(\overline\Phi) +  \frac{\varepsilon}{2} \, \frac{1}{\|\overline\Phi\|_\infty} \, a[\overline\Phi, \overline\Phi]
        & & \text{by \eqref{eq:indicatordual}} \\
        & \leq \max_{\Phi \in H_0(\div)} \; - J^*(\div \Phi) - I_{\CC_\varepsilon}^*(\Phi) +  \frac{\varepsilon}{2} \, \frac{1}{\|\overline\Phi\|_\infty} \, a[\overline\Phi, \overline\Phi] \\
        &  = \min_{u\in L^2(\Omega)} \; J(u) + I_{\CC_\varepsilon}(\nabla u)  +  \frac{\varepsilon}{2} \, \frac{1}{\|\overline\Phi\|_\infty} \, a[\overline\Phi, \overline\Phi] 
        & & \text{by \eqref{eq:fencheleps}}\\
        & = J(u_\varepsilon)  +  \frac{\varepsilon}{2} \, \frac{1}{\|\overline\Phi\|_\infty} \, a[\overline\Phi, \overline\Phi],
    \end{aligned}
    \end{equation*}
    which gives the first claim with $C := \frac{1}{2\|\overline\Phi\|_\infty} \, a[\overline\Phi, \overline\Phi]$.
    
    To show the second assertion, we apply Clarkson's inequality to obtain
    \begin{equation*}
        \alpha \, \Big\| \frac{u^* + u_\varepsilon}{2} - u_d \Big\|^2_{L^2(\Omega)} + 
        \alpha \, \Big\| \frac{u^* - u_\varepsilon}{2} \Big\|^2_{L^2(\Omega)} 
        \leq \frac{\alpha}{2} \Big( \| u^* - u_d \|_{L^2(\Omega)}^2 + \| u_\varepsilon - u_d\|_{L^2(\Omega)}^2 \Big).
    \end{equation*}
    For the ease of notation, let us abbreviate $y^* := S(u^* + f)$ and $y_\varepsilon := S(u_\varepsilon + f)$. 
    Then we deduce from the above and the convexity of $j$ that
    \begin{equation*}
    \begin{aligned}
        \alpha \, \Big\| \frac{u^* - u_\varepsilon}{2} \Big\|^2_{L^2(\Omega)} 
        & \leq j(y^*) + \frac{\alpha}{2} \, \| u^* - u_d \|_{L^2(\Omega)}^2 
        + j(y_\varepsilon) +  \frac{\alpha}{2} \, \| u_\varepsilon - u_d\|_{L^2(\Omega)}^2 \\
        & \quad - 2 \Bigg( j\Big(\frac{y^* + y_\varepsilon}{2} \Big) + \frac{\alpha}{2} \, \Big\| \frac{u^* + u_\varepsilon}{2} - u_d \Big\|^2_{L^2(\Omega)} \Bigg) .
    \end{aligned}
    \end{equation*}
    The linearity of $S$ implies $j(\frac{y^* + y_\varepsilon}{2}) = j(S(\frac{u^* + u_\varepsilon}{2} + f))$ and thus we arrive at
    \begin{equation*}
        \alpha \, \Big\| \frac{u^* - u_\varepsilon}{2} \Big\|^2_{L^2(\Omega)} 
        \leq J(u^*) + J(u_\varepsilon) - 2\,J\Big(\frac{u^* + u_\varepsilon}{2} \Big).
    \end{equation*}
    As seen at the beginning of the proof, $u^*$ is feasible for \eqref{eq:Peps} and, due to the convexity of $\TV_\varepsilon$ by Lemma~\ref{lem:L2TVeps}, 
    so is $\frac{u^* + u_\varepsilon}{2}$. The optimality of $u_\varepsilon$ thus gives
    \begin{equation*}
        \alpha \, \Big\| \frac{u^* - u_\varepsilon}{2} \Big\|^2_{L^2(\Omega)} \leq J(u^*) - J(u_\varepsilon),
    \end{equation*}
    which implies the second assertion with $C := \sqrt{\frac{2}{\alpha\|\bar\Phi\|_\infty}\, a[\bar\Phi, \bar\Phi]}$.
\end{proof}

\begin{remark}
    The inequality in the first part of the above proof can be slightly sharpened. To be more precise, although $L^\infty(\Omega;\R^d)$ is not separable, it holds that 
    \begin{equation*}
        \sup_{\Phi\in H_0(\div;\Omega) \cap C_0(\Omega;\R^d)} \; - J^*(\div\Phi)  - \|\Phi\|_\infty 
        = \sup_{\Phi\in H_0(\div) \cap L^\infty(\Omega;\R^d)} \; - J^*(\div\Phi)  - \|\Phi\|_\infty.
    \end{equation*}
    This can for instance be seen by employing the density result from \cite[Theorem~1]{HR15}. As this observation does not improve our 
    a priori estimate, we do not go into detail. 
\end{remark}

Let us conclude this section with a discussion on the assumption in \eqref{eq:HHHdiv}. An inspection of the proof of Theorem~\ref{thm:convrate} shows 
that one can choose a Hilbert space $\HH$ different from $H_0(\div;\Omega)$ and still obtain the same convergence rates, provided that there is a solution 
of the pre-dual problem \eqref{eq:priminf} that satisfies $\bar\Phi\in \HH$. This leads to the following

\begin{corollary}\label{cor:convrate}
    Let Assumptions~\ref{assu:density} and \ref{assu:Lsmooth} hold true and assume that $\HH$ is a proper subspace of $H_0(\div;\Omega)$, such as, e.g., $\HH = H^1_0(\Omega;\R^d)$. 
    Assume further that there is at least one solution of the pre-dual problem \eqref{eq:priminf} that satisfies $\bar\Phi \in \HH$. 
    Then the a priori estimates from \eqref{eq:convrate} are valid.
\end{corollary}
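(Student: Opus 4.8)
The plan is to reproduce the proof of Theorem~\ref{thm:convrate} almost verbatim, tracking the one place where the identity \eqref{eq:HHHdiv} was used and replacing it with the hypothesis of the corollary. Going through that proof, $\HH = H_0(\div;\Omega)$ is exploited in exactly two ways: it guarantees Assumption~\ref{assu:density} through Remark~\ref{rem:density} — but that assumption is now imposed directly — and it ensures that the minimizer $\bar\Phi$ of the pre-dual problem \eqref{eq:priminf} furnished by Lemma~\ref{lem:barPhi}, which a priori only lies in $H_0(\div;\Omega)\cap L^\infty(\Omega;\R^d)$, belongs to the space $\HH$ over which the dual of \eqref{eq:Peps} is posed. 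The latter is needed so that the support-functional estimate \eqref{eq:indicatordual} may be applied to $\bar\Phi$ and so that $\bar\Phi$ is a competitor in the maximization \eqref{eq:fencheleps}. The assumption that some solution $\bar\Phi$ of \eqref{eq:priminf} satisfies $\bar\Phi\in\HH$ provides precisely this, and since \eqref{eq:priminf} is posed over $H_0(\div;\Omega)\cap L^\infty(\Omega;\R^d)$ this $\bar\Phi$ is automatically in $L^\infty(\Omega;\R^d)$, so that $\bar\Phi\in\HH\cap L^\infty(\Omega;\R^d)$ and \eqref{eq:indicatordual} applies. (The degenerate case $\bar\Phi = 0$ is trivial: then $J(u^*) = -J^*(0)\le J(u_\varepsilon)$ by \eqref{eq:fencheleps} with $\Phi = 0$, which forces $u^* = u_\varepsilon$; so we may assume $\bar\Phi\neq 0$, hence $\|\bar\Phi\|_\infty > 0$.)

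The second task is to check that every remaining ingredient of the proof of Theorem~\ref{thm:convrate} is independent of the concrete choice of $\HH$. The Fenchel-Rockafellar duality relating \eqref{eq:P} to its pre-dual \eqref{eq:primC} resp.\ \eqref{eq:priminf}, encoded in \eqref{eq:fenchel0}--\eqref{eq:fenchelrock}, involves only \eqref{eq:P} and the space $X = H_0(\div;\Omega)\cap C_0(\Omega;\R^d)$ and never mentions $\HH$; the feasibility of $u^*$ for \eqref{eq:Peps} for all $\varepsilon > 0$ follows from $\TV_\varepsilon(u^*)\le\TV(u^*)\le 1$, i.e.\ from Lemma~\ref{lem:TVeps}\ref{it:TVepsleqTV}, which holds for any admissible $\HH$; the Slater-point computation $B_{\HH^*}(0,\sqrt{2\varepsilon\beta})\subset\CC_\varepsilon$ that delivers the duality identity \eqref{eq:fencheleps} uses only the coercivity constant $\beta$ of $a$ from \eqref{eq:acoer}; and the reformulation of \eqref{eq:Peps} via $\nabla : L^2(\Omega)\to H_0(\div;\Omega)^*\embed\HH^*$ stays valid because $C^\infty_c(\Omega;\R^d)\embed\HH\embed H_0(\div;\Omega)$ with dense image (by the definition of $H_0(\div;\Omega)$) gives $H_0(\div;\Omega)^*\embed\HH^*$. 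Finally, the second half of the proof of Theorem~\ref{thm:convrate} — Clarkson's inequality together with the convexity of $j$ and of $\TV_\varepsilon$ (Lemma~\ref{lem:L2TVeps}) and the feasibility of both $u^*$ and $\tfrac{1}{2}(u^* + u_\varepsilon)$ for \eqref{eq:Peps} — makes no reference to $\HH$ whatsoever.

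Granting these observations, the objective estimate is obtained exactly as before: starting from $J(u_\varepsilon)\le J(u^*)$, one unwinds $J(u^*)$ through \eqref{eq:fenchel0}, \eqref{eq:fenchelrock}, the enlargement of the pre-dual feasible set from $H_0(\div;\Omega)\cap C_0(\Omega;\R^d)$ to $H_0(\div;\Omega)\cap L^\infty(\Omega;\R^d)$, Lemma~\ref{lem:barPhi} (now picking the solution $\bar\Phi\in\HH$), the bound \eqref{eq:indicatordual} applied to $\bar\Phi$, and the duality \eqref{eq:fencheleps}, to reach $J(u^*)\le J(u_\varepsilon) + \tfrac{\varepsilon}{2}\,\|\bar\Phi\|_\infty^{-1}\,a[\bar\Phi,\bar\Phi]$; combined with $J(u_\varepsilon)\le J(u^*)$ this is the first inequality in \eqref{eq:convrate} with $C = \tfrac{1}{2}\|\bar\Phi\|_\infty^{-1}a[\bar\Phi,\bar\Phi]$. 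The $L^2$-estimate follows verbatim from the second half of the proof of Theorem~\ref{thm:convrate}, giving $\|u^* - u_\varepsilon\|_{L^2(\Omega)}\le C\sqrt{\varepsilon}$ with $C = \big(2(\alpha\|\bar\Phi\|_\infty)^{-1}a[\bar\Phi,\bar\Phi]\big)^{1/2}$. There is no substantive obstacle here; the only care required is the bookkeeping above — confirming that the membership $\bar\Phi\in\HH$ really is the sole use of \eqref{eq:HHHdiv} in Theorem~\ref{thm:convrate} and that all the duality statements were from the outset formulated for a general Hilbert space $\HH$ with $C^\infty_c(\Omega;\R^d)\embed\HH\embed H_0(\div;\Omega)$.
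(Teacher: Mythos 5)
Your proposal is correct and follows essentially the same route as the paper, which proves the corollary precisely by inspecting the proof of Theorem~\ref{thm:convrate} and noting that the only use of \eqref{eq:HHHdiv} is to guarantee Assumption~\ref{assu:density} and the membership $\bar\Phi\in\HH$ needed for \eqref{eq:indicatordual} and \eqref{eq:fencheleps}. Your additional treatment of the degenerate case $\bar\Phi=0$ (which the paper leaves implicit, since it divides by $\|\bar\Phi\|_\infty$) is a harmless and correct refinement.
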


It is an open question to discuss sufficient conditions on the data such as $j$ and $\Omega$ that guarantee the additional regularity of $\bar\Phi$ required in the above corollary. 
This gives rise to future research.

\section{Outer approximation} \label{sec:Outer_Approximation}

Albeit regularized, problem \eqref{eq:Peps} is still not straight forward to solve as it involves uncountably many linear inequality constraints of the form
\begin{equation} \label{eq:linear_constraints}
	F_\eps(u, \phi) \leq 1 \quad \forall \, \phi \in C^\infty_c(\Omega;\R^d) \colon \| \phi \|_{\infty} \leq 1,
\end{equation}
where $F_\varepsilon$ is as defined in \eqref{eq:defFeps}. A common strategy to cope with constraints of this type is outer approximation, where only finitely many 
constraints of the form \eqref{eq:linear_constraints} are considered and infeasible solutions are cut off by iteratively adding new constraints to the problem. 
This method is well established for the solution of finite dimensional optimization problems, in particular 
mixed-integer nonlinear programs, see, e.g., the classical references \cite{DG86, FL94}, but has recently successfully been applied to infinite dimensional problems 
as well, cf.\ \cite{BGM24b, MS24}.

The construction of the algorithm as well as the associated convergence analysis heavily depends on the following

\begin{assumption}\label{assu:outerapprox}
    Throughout this section, the regularization parameter $\varepsilon > 0$ is fixed and 
    we assume that the Tikhonov parameter satisfies $\alpha > 0$ and that Assumption~\ref{assu:density} is fulfilled, i.e., 
    $\{ \varphi \in C^\infty_c(\Omega;\R^d) \colon \|\varphi\|_\infty \leq 1\}$ is dense in $\{ \varphi \in \HH \colon \|\varphi\|_\infty \leq 1\}$. 
\end{assumption}

Under this assumption, there exists a unique solution $u^* \in L^2(\Omega) \cap \BV(\Omega)$ of the original problem \eqref{eq:P} and, for every $\varepsilon > 0$, 
a unique solution $u_\varepsilon \in L^2(\Omega)$ of \eqref{eq:Peps}. Moreover, we have Lemma~\ref{lem:L2TVeps} at our disposal such that 
\begin{equation}\tag{Q$_\varepsilon$}\label{eq:Qeps}
    \TV_\eps(u) = \max \left\{F_\varepsilon(u, \varphi) : \varphi\in \HH, \; \|\varphi\|_\infty \leq 1 \right\},
\end{equation}        
where the maximization problem on the right hand side admits a unique solution. This maximizer is essential for the construction of the algorithm, which reads as follows:
We consider a relaxation of \eqref{eq:Peps} with $k \in \N_0$ constraints of the form \eqref{eq:linear_constraints}, which we denote by \eqref{eq:Pk}, 
and compute its minimizer $u_k \in L^2(\Omega)$. Subsequently, we compute $\TV_\eps(u_k)$ by solving the maximization problem \eqref{eq:Qeps} for $u = u_k$, 
which we denote by \eqref{eq:Qk} and which admits a unique maximizer $\phi_{k+1} \in \HH$. If $\TV_\eps(u_k) = F_\eps(u_k,\phi_{k+1}) \leq 1$,  
we have found an optimal solution to \eqref{eq:Peps}, because $u_k$ is optimal for the relaxation \eqref{eq:Pk} of \eqref{eq:Peps} and feasible for \eqref{eq:Peps}. 
If $\TV_\eps(u_k) = F_\eps(u_k,\phi_{k+1}) >1$, we add the linear constraint $F_\eps(u,\phi_{k+1}) \leq 1$ to \eqref{eq:Pk} 
and denote the resulting problem by (P$_{k+1}$). 
This completes the iteration and we start the next one by solving (P$_{k+1}$). We provide the complete algorithm in Algorithm~\ref{alg:Reps}.

\begin{algorithm}[h]
	\caption{Outer approximation algorithm for \eqref{eq:Peps} \label{alg:Reps}}
	\begin{algorithmic}[1]
		\State Set $k=0$
		\State Solve 
		\begin{equation}\tag{P$_k$}\label{eq:Pk}
			\begin{aligned}
				\min_{u\in L^2(\Omega)} \quad & J(u) \\ 
				\text{s.t.} \quad & F_\varepsilon(u, \varphi_i) \leq 1 \quad \forall\,  i \in \N \text{ with } i \leq k
			\end{aligned}
		\end{equation} 
		and denote the optimal solution by $u_k$. \label{algReps2}
		\State Compute $\TV_\eps(u_k)$ by solving
		\begin{equation}\tag{Q$_k$} \label{eq:Qk}
			\begin{aligned}
				\max_{\varphi \in \HH} \quad & F_\varepsilon(u_k, \varphi)\\
				\text{s.t.} \quad &  \|\varphi\|_\infty \leq 1
			\end{aligned}
		\end{equation}           			
		and denote the optimal solution by $\varphi_{k+1}$.\label{algReps3}
		\If{$F_\varepsilon(u_k, \varphi_{k+1}) \leq 1$}
		\State \Return $u_k$ as optimal solution to \eqref{eq:Peps}.
		\EndIf
		\State Set $k \leftarrow k+1$ and go to Step~\ref{algReps2}.
	\end{algorithmic}
\end{algorithm}

\begin{lemma}\label{lem:barufeas}
	Let Assumption~\ref{assu:outerapprox} be fulfilled and denote by $\{u_k\}_{k \in \N}$ the sequence of optimal solutions to \eqref{eq:Pk} produced by \cref{alg:Reps}. 
    Then there exists a weakly converging subsequence and every such subsequence of $\{u_k\}_{k \in \N}$ also converges strongly in $L^2(\Omega)$. 
    For each accumulation point $\bar{u} \in L^2(\Omega)$ of $\{u_k\}_{k \in \N}$, there holds $\TV_\eps(\bar u) \leq 1$, that is, $\bar{u}$ is feasible for \eqref{eq:Peps}.
\end{lemma}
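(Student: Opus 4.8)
The plan is to use the fact that $\alpha>0$ forces the objective to be coercive in $L^2(\Omega)$, so that the sequence of iterates $\{u_k\}$ is bounded in $L^2(\Omega)$, and then to combine weak lower semicontinuity of $J$ with weak lower semicontinuity of each individual linear constraint to identify the accumulation point. First I would note that $u\equiv 0$ is feasible for every relaxation \eqref{eq:Pk}, since $F_\varepsilon(0,\varphi_i) = -\tfrac{\varepsilon}{2}a[\varphi_i,\varphi_i]\leq 0\leq 1$; hence $J(u_k)\leq J(0)$ for all $k$, and exactly as in \eqref{eq:ubound} in the proof of \cref{thm:convTV}, the bound $\|u_k\|_{L^2(\Omega)}\leq \sqrt{\tfrac{2}{\alpha}(J(0)-\kappa)} + \|u_d\|_{L^2(\Omega)}$ follows. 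Thus $\{u_k\}$ is bounded in $L^2(\Omega)$ and admits a weakly convergent subsequence $u_{k_n}\rightharpoonup \bar u$.

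Next I would upgrade weak to strong convergence. Since the feasible set of \eqref{eq:Peps} is larger than that of every \eqref{eq:Pk}, the optimal value of \eqref{eq:Peps}, namely $J(u_\varepsilon)$, is a lower bound: $J(u_k)\geq J(u_\varepsilon)$ for all $k$ is false in general (the relaxations have \emph{smaller} feasible sets, so $J(u_k)\le J(u_\varepsilon)$ is also false)—rather, \eqref{eq:Pk} is a \emph{restriction}, not a relaxation, so $J(u_k)$ is nonincreasing in $k$ (adding constraints can only increase the minimum... wait, adding constraints shrinks the feasible set, so $J(u_k)$ is nondecreasing in $k$). In any case $J(u_k)$ is bounded (between $\kappa$ and $J(0)$) and monotone, hence convergent. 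For the strong convergence I would argue as in the last part of the proof of \cref{thm:convTV}: write $J = j\circ S(\cdot+f) + \tfrac{\alpha}{2}\|\cdot - u_d\|_{L^2}^2$; the first summand is weakly lsc and the second is weakly lsc, their sum converges along the subsequence, so each summand converges separately, giving $\|u_{k_n} - u_d\|_{L^2(\Omega)}\to \|\bar u - u_d\|_{L^2(\Omega)}$, i.e.\ norm convergence, which together with weak convergence yields $u_{k_n}\to\bar u$ strongly in $L^2(\Omega)$.

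Finally I would show feasibility of $\bar u$ for \eqref{eq:Peps}, i.e.\ $\TV_\varepsilon(\bar u)\leq 1$. The key observation is that $\varphi_{k+1}$, the maximizer of \eqref{eq:Qk}, is added as a constraint to all subsequent relaxations, so for any fixed index $i$ and all $k\geq i$ one has $F_\varepsilon(u_k,\varphi_i)\leq 1$. Along the convergent subsequence, since $u\mapsto F_\varepsilon(u,\varphi_i) = -\tfrac{\varepsilon}{2}a[\varphi_i,\varphi_i] + \int_\Omega u\,\div\varphi_i\,\d x$ is affine and continuous (even weakly continuous) in $u\in L^2(\Omega)$, passing to the limit gives $F_\varepsilon(\bar u,\varphi_i)\leq 1$ for every $i$. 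Now I must control $\TV_\varepsilon(\bar u) = \sup\{F_\varepsilon(\bar u,\varphi): \varphi\in\HH, \|\varphi\|_\infty\leq 1\}$, and here lies the main obstacle: the supremum is over \emph{all} admissible $\varphi$, not just the countably many $\varphi_i$ generated by the algorithm. The way around this is the strong convergence just established together with the Lipschitz estimate from \cref{lem:epsnormconv}: by that estimate $|\TV_\varepsilon(u_k) - \TV_\varepsilon(\bar u)| \leq C_\varepsilon \|u_k - \bar u\|_{L^2(\Omega)}\to 0$, so $\TV_\varepsilon(\bar u) = \lim_n \TV_\varepsilon(u_{k_n})$. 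It therefore suffices to show $\liminf_n \TV_\varepsilon(u_{k_n})\leq 1$. But $\TV_\varepsilon(u_{k_n}) = F_\varepsilon(u_{k_n},\varphi_{k_n+1})$, and $\varphi_{k_n+1}$ is a constraint for relaxation $(\mathrm{P}_{k_n+1})$, hence $F_\varepsilon(u_{k_n+1},\varphi_{k_n+1})\leq 1$; combining $F_\varepsilon(u_{k_n},\varphi_{k_n+1}) - F_\varepsilon(u_{k_n+1},\varphi_{k_n+1}) = \int_\Omega (u_{k_n} - u_{k_n+1})\div\varphi_{k_n+1}\,\d x$ with a uniform bound on $\|\varphi_{k_n+1}\|_\HH$ (from coercivity of $a$ and boundedness of $\{F_\varepsilon(u_k,\varphi_{k+1})\}$, which is bounded since $u_k$ is bounded and $\TV_\varepsilon(u_k)\le \TV_\varepsilon(0)+C$... actually from the definition one gets $\tfrac{\varepsilon}{2}\beta\|\varphi_{k+1}\|_\HH^2 \le \int u_k\div\varphi_{k+1} \le C\|\varphi_{k+1}\|_\HH$, so $\|\varphi_{k+1}\|_\HH$ is uniformly bounded) and with $\|u_{k_n}-u_{k_n+1}\|_{L^2}\to 0$ along the subsequence (both converge to $\bar u$), we conclude $\TV_\varepsilon(\bar u) = \lim_n F_\varepsilon(u_{k_n},\varphi_{k_n+1})\leq 1$, which is the claim. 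The delicate point to get right is precisely this interplay between the two consecutive iterates $u_{k_n}$ and $u_{k_n+1}$ and the uniform $\HH$-bound on the cutting planes $\varphi_{k+1}$.
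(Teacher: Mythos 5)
Your boundedness argument and your passage to the limit in the finitely many constraints, yielding $F_\eps(\bar u,\varphi_i)\le 1$ for every $i$, coincide with the paper's proof. One secondary issue first: in the strong-convergence step you infer that the two summands of $J$ converge separately from the mere fact that $J(u_k)$ is monotone and bounded. That inference only works if the limit of $J(u_{k_n})$ is $J(\bar u)$; for this you must first note that $\bar u$ is feasible for every \eqref{eq:Pk} (which is exactly the consequence of $F_\eps(\bar u,\varphi_i)\le 1$ for all $i$ that you derive only afterwards), so that $J(u_k)\le J(\bar u)$, and then weak lower semicontinuity gives $J(u_{k_n})\to J(\bar u)$. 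This is an ordering/incompleteness problem rather than a missing idea.

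The genuine gap is in the feasibility step: you need $\|u_{k_n}-u_{k_n+1}\|_{L^2(\Omega)}\to 0$ and justify it by ``both converge to $\bar u$''. But $\{u_{k_n+1}\}$ is not a subsequence of the strongly convergent subsequence $\{u_{k_n}\}$, and at this stage of the argument different subsequences of $\{u_k\}$ may have different accumulation points (uniqueness of the limit is only obtained in Theorem~\ref{thm:conveps}, which relies on the present lemma, so invoking it here would be circular). Hence $u_{k_n+1}$ may converge, along a further subsequence, to some $\bar u'\neq\bar u$, and the asserted convergence of consecutive iterates is unjustified. The step can be repaired in two ways: (a) replace $u_{k_n+1}$ by $\bar u$ itself — you already have $F_\eps(\bar u,\varphi_{k_n+1})\le 1$, so $\TV_\eps(u_{k_n})=F_\eps(u_{k_n},\varphi_{k_n+1})\le 1+\|u_{k_n}-\bar u\|_{L^2(\Omega)}\,\|\div\varphi_{k_n+1}\|_{L^2(\Omega)}\to 1$ by your uniform $\HH$-bound on $\{\varphi_{k+1}\}$ (which is correct and identical to the paper's) and the strong convergence $u_{k_n}\to\bar u$; then the continuity of $\TV_\eps$ on $L^2(\Omega)$ from Lemma~\ref{lem:epsnormconv} (plain continuity suffices; the Lipschitz estimate you cite is not literally stated there, though it does hold on bounded sets) gives $\TV_\eps(\bar u)\le 1$; or (b) prove $\|u_{k+1}-u_k\|_{L^2(\Omega)}\to 0$ for the whole sequence from the $\alpha$-strong convexity of $J$: since $u_{k+1}$ is feasible for \eqref{eq:Pk} and $u_k$ minimizes $J$ over that convex set, $\frac{\alpha}{2}\|u_{k+1}-u_k\|_{L^2(\Omega)}^2\le J(u_{k+1})-J(u_k)\to 0$, the optimal values being nondecreasing and bounded. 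With either repair, your route — continuity of $\TV_\eps$ combined with the cutting-plane inequalities — is a genuinely different and somewhat shorter argument than the paper's, which instead identifies the weak limit of $\{\varphi_{k+1}\}$ with the maximizer for $\bar u$ and establishes convergence of the quadratic form $a[\varphi_{k+1},\varphi_{k+1}]$ before passing to the limit in $F_\eps(\bar u,\varphi_{k+1})\le 1$.
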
   
    			
\begin{proof}
    First observe that $u \equiv 0$ is feasible for every \eqref{eq:Pk}, $k\in \N$, due to the coercivity of the bilinear form $a$ and therefore, the boundedness of $\{u_k\}$ 
    in $L^2(\Omega)$ follows exactly as in \eqref{eq:ubound}. This gives the existence of a weak accumulation point. 
	
	Next we prove that each weakly converging subsequence of the sequence $\{u_k\}_{k \in \N}$ actually converges strongly in $L^2(\Omega)$. 
	To this end, we consider a subsequence of $\{u_k\}_{k \in \N}$ which we denote by the same symbol for simplicity and that converges weakly 
	to some $\bar{u} \in L^2(\Omega)$, that is, $u_k \rightharpoonup \bar{u}$ in $L^2(\Omega)$. 
    Now let $i \in \N$ be fixed but arbitrary. Then, for every index $k$ of the subsequence large enough, the constraints in \eqref{eq:Pk} imply
	\begin{align*}
		F_\eps(u_k, \varphi_i) = - \frac{\eps}{2} a[\phi_i,\phi_i] + \int_\Omega u_k \div \phi_i \d x \leq 1
	\end{align*}
	and, by passing to the limit $k \to \infty$, we obtain
	\begin{equation}\label{eq:ineqi}
		F_\eps(\bar{u},\varphi_i) = - \frac{\eps}{2} a[\phi_i,\phi_i] + \int_\Omega \bar{u} \div \phi_i \d x \leq 1.
	\end{equation}
	Since $i\in \N$ was arbitrary, this holds for all $i\in \N$ such that $\bar{u}$ is feasible for every problem \eqref{eq:Pk}, $k \in \N$. 
	Since $u_k$ is optimal for \eqref{eq:Pk}, there holds $J(u_k) \leq J(\bar{u})$ for all $k \in \N$. Due to the weak lower semicontinuity of $J$, we thus obtain
	\begin{equation*}
		J(\bar{u}) \leq \liminf_{k \to \infty} J(u_k) \leq \limsup_{k \to \infty} J(u_k) \leq J(\bar{u})
	\end{equation*}
	and therefore $J(u_k) \to J(\bar{u})$ as $k \to \infty$. 
    Therefore, since both components of the objective, i.e., $u \mapsto j(S (u_\varepsilon + f))$ and $u\mapsto \frac{\alpha}{2}\, \| u - u_d \|_{L^2(\Omega)}^2 $, 
    are weakly lower semicontinuous, they converge separately, too, i.e., in particular
    \begin{equation*}
        \frac{\alpha}{2}\,  \| u_\varepsilon - u_d \|_{L^2(\Omega)}^2 \to \frac{\alpha}{2}\, \| u^* - u_d \|_{L^2(\Omega)}^2, 
    \end{equation*}
    which in turn implies that $\|u_\varepsilon\|_{L^2(\Omega)}$ converges to  $\|u^*\|_{L^2(\Omega)}$. 
    Together with the weak convergence, this yields strong convergence of the subsequence, i.e., $u_k \to \bar{u}$ in $L^2(\Omega)$ as $k \to \infty$.
	
	Next we prove the feasibility of $\bar{u}$ for \eqref{eq:Peps}, that is, we prove $\TV_\eps(\bar{u}) \leq 1$. To this end, we first note that 
    the sequence $\{\varphi_{k+1}\}$ is bounded in $\HH$, which is seen as follows: The optimality of $\varphi_{k+1}$ implies
    \begin{equation*}
        F_\eps(u_{k},\phi_{k+1}) \geq F_\eps(u_k, 0) = 0
        \quad \Longrightarrow \quad \frac{\varepsilon}{2} \, a[\varphi_{k+1}, \varphi_{k+1}] \leq \int_\Omega u_k \, \div \varphi_{k+1} \,\d x,
    \end{equation*}
    such that the boundedness of $\|u_k \|_{L^2(\Omega)}$ along with the continuous embedding of $\HH$ in $H_0(\div;\Omega)$ and the coercivity of $a$ 
    gives the desired boundedness of $\{\varphi_k\}$ in $\HH$.
	Therefore, by restricting to a further subsequence, if necessary, we obtain sequences, denoted again by the same symbol, 
	such that $u_k \to \bar{u}$ and $\phi_{k+1} \weakly \tilde{\phi}$ in $\HH$.
	Since, the set $\{ \phi \in \HH : \| \phi \|_\infty \leq 1 \}$ is non-empty, closed, and convex and therefore weakly closed, 
	$\tilde{\phi}$ is feasible for \eqref{eq:Qeps}. Denote the optimal solution to \eqref{eq:Qeps} with $u = \bar{u}$ by $\bar{\varphi}$ 
	so that $\TV_\varepsilon(\bar u) = F_\varepsilon(\bar u, \bar\varphi)$. 
    Then, together with the weak lower semicontinuity of $\HH \ni \varphi \mapsto a[\varphi, \varphi] \in \R$ and the strong and weak convergence of $\{u_k\}$ and 
    $\{\varphi_{k+1}\}$, respectively,  
    the optimality of $\bar\varphi$ and the feasibility of $\tilde\varphi$ imply	
	\begin{align*}
		F_\eps(\bar{u}, \bar{\phi}) \geq F_\eps(\bar{u}, \tilde{\phi}) 
		& = - \frac{\eps}{2}\, a[\tilde{\phi}, \tilde{\phi}] + \int_\Omega \bar{u} \div \tilde{\phi} \dd x \\
		& \geq \limsup_{k \to \infty} \left( - \frac{\eps}{2}\, a [\phi_{k+1},\phi_{k+1}] \right) + \lim_{k \to \infty} \int_\Omega u_{k} \div \phi_{k+1} \dd x \\
		& \geq \limsup_{k \to \infty} F_\eps(u_k,\phi_{k+1}) \\
		& \geq \liminf_{k \to \infty} F_\eps(u_k,\phi_{k+1}) \\
		& \geq \liminf_{k \to \infty} F_\eps(u_k,\bar{ \phi}) 
		= F_\eps(\bar{u}, \bar{\phi}),
	\end{align*}
	where the last inequality holds because $\phi_{k+1}$ is the optimal solution to \eqref{eq:Qk}, which yields $F_\eps(u_k, \varphi_{k+1}) \geq F_\eps(u_k,\bar{\varphi})$
	for all $k\in \N$. This implies
	\begin{align}\label{eq:convFeps}
		F_\varepsilon(u_k, \varphi_{k+1}) \to F_\varepsilon(\bar u, \tilde \varphi)=F_\varepsilon(\bar u, \bar\varphi)
	\end{align}
	as $k \to \infty$.
	Since \eqref{eq:Qeps} is uniquely solvable for every given $u \in L^2(\Omega)$ according to Lemma~\ref{lem:L2TVeps}, 
	this gives $\bar\varphi = \tilde\varphi$, as $\bar{ \phi}$ is optimal and $\tilde{\phi}$ is feasible for \eqref{eq:Qeps} with $u = \bar u$.
	Hence, $\varphi_{k+1} \weakly \bar\varphi$ in $\HH$ and, along with the strong convergence of $\{u_k\}$, we deduce from \eqref{eq:convFeps} that
	\begin{align*}
		\frac{\varepsilon}{2}\,a[\bar\varphi,\bar{\varphi}]
		&\leq \liminf_{k \to \infty} \frac{\varepsilon}{2}\,a[\varphi_{k+1},\varphi_{k+1}]\\
		&\leq \limsup_{k \to \infty} \frac{\varepsilon}{2}\,a[\varphi_{k+1},\varphi_{k+1}]\\
		&\leq \limsup_{k \to\infty} - F_\varepsilon(u_k, \varphi_{k+1})
		+ \limsup_{k \to \infty} \int_\Omega u_k \,\div \varphi_{k+1} \, \d x\\
		&= - F_\varepsilon(\bar u, \bar\varphi) + \int_\Omega \bar u\,\div \bar\varphi \, \d x 
		= \frac{\varepsilon}{2}\,a[\bar\varphi,\bar{\varphi}]
	\end{align*}
	and therefore $a[\varphi_{k+1},\varphi_{k+1}] \to a[\bar\varphi,\bar{\varphi}]$ as $k \to \infty$. We now return to \eqref{eq:ineqi}, 
	which holds for every $i\in \N$ and thus also for $i = k+1$, i.e.,  
	\begin{equation*}
	    1 \geq F_\eps(\bar{u},\varphi_{k+1}) = - \frac{\eps}{2} a[\phi_{k+1},\phi_{k+1}] + \int_\Omega \bar{u} \div \phi_{k+1} \d x 
	    \to  - \frac{\eps}{2} a[\bar\phi, \bar\phi] + \int_\Omega \bar{u}\, \div \bar\phi\, \d x = \TV_\varepsilon(\bar u). 
	\end{equation*}
	This proves that every accumulation point of $\{u_k\}_{k \in \N}$ is feasible for \eqref{eq:Peps}.
\end{proof}

\begin{theorem}\label{thm:conveps}
	Let Assumption~\ref{assu:outerapprox} hold true and assume that \Cref{alg:Reps} does not stop after a finite number of iterations. 
	Then the whole sequence $\{u_k\}_{k \in \N}$ generated by \Cref{alg:Reps} converges strongly in $L^2(\Omega)$ to the unique minimizer of \eqref{eq:Peps}.
\end{theorem}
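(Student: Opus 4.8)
The plan is to combine \Cref{lem:barufeas} with a standard relaxation argument and the subsequence principle; most of the analytical work has in fact already been done in \Cref{lem:barufeas}. First I would recall from that lemma that $\{u_k\}_{k\in\N}$ is bounded in $L^2(\Omega)$, that every weakly convergent subsequence of $\{u_k\}_{k\in\N}$ already converges strongly in $L^2(\Omega)$, and that every accumulation point $\bar u$ satisfies $\TV_\eps(\bar u)\leq 1$, i.e.\ is feasible for \eqref{eq:Peps}. Since $L^2(\Omega)$ is reflexive, the boundedness guarantees that accumulation points exist and, more precisely, that every subsequence of $\{u_k\}_{k\in\N}$ admits a further subsequence converging weakly — hence, by \Cref{lem:barufeas}, strongly — to some accumulation point.

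The second step is to identify each accumulation point with the unique minimizer $u_\varepsilon$ of \eqref{eq:Peps} (which exists and is unique by \Cref{prop:mitTikhonov} under Assumption~\ref{assu:outerapprox}). The key observation is that \eqref{eq:Pk} is a relaxation of \eqref{eq:Peps}: since $\TV_\eps(u_\varepsilon)\leq 1$, the characterization \eqref{eq:Qeps} gives $F_\varepsilon(u_\varepsilon,\varphi_i)\leq 1$ for every $i\in\N$, so $u_\varepsilon$ is feasible for \eqref{eq:Pk} for every $k\in\N$. Optimality of $u_k$ for \eqref{eq:Pk} therefore yields $J(u_k)\leq J(u_\varepsilon)$ for all $k\in\N$. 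Taking a subsequence with $u_k\to\bar u$ in $L^2(\Omega)$ and invoking the weak (hence a fortiori strong) lower semicontinuity of $J$ gives
\begin{equation*}
    J(\bar u)\leq\liminf_{k\to\infty}J(u_k)\leq J(u_\varepsilon).
\end{equation*}
On the other hand, $\bar u$ is feasible for \eqref{eq:Peps} by \Cref{lem:barufeas}, so $J(\bar u)\geq J(u_\varepsilon)$ by optimality of $u_\varepsilon$. Hence $J(\bar u)=J(u_\varepsilon)$, and the uniqueness of the minimizer of \eqref{eq:Peps} forces $\bar u=u_\varepsilon$.

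Finally I would conclude by the subsequence principle: we have shown that every subsequence of $\{u_k\}_{k\in\N}$ possesses a further subsequence converging strongly in $L^2(\Omega)$ to the one and the same limit $u_\varepsilon$; therefore the whole sequence $\{u_k\}_{k\in\N}$ converges strongly in $L^2(\Omega)$ to $u_\varepsilon$, which is the unique minimizer of \eqref{eq:Peps}. The main obstacle — the strong convergence of weakly convergent subsequences and, above all, the feasibility of accumulation points for \eqref{eq:Peps}, which rests on the delicate argument that the maximizers $\varphi_{k+1}$ of \eqref{eq:Qk} converge weakly with $a[\varphi_{k+1},\varphi_{k+1}]\to a[\bar\varphi,\bar\varphi]$ — has already been resolved in \Cref{lem:barufeas}. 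Given that lemma, the proof of the theorem is a short relaxation-plus-uniqueness argument, and the only point requiring a moment's care is the (immediate) verification that $u_\varepsilon$ is admissible for each finite relaxation \eqref{eq:Pk}.
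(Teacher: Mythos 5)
Your proposal is correct and follows essentially the same route as the paper: feasible points of \eqref{eq:Peps} are feasible for every relaxation \eqref{eq:Pk}, so $J(u_k)\leq J(u_\varepsilon)$, and combining this with the strong convergence and feasibility of accumulation points from \Cref{lem:barufeas}, the continuity (or lower semicontinuity) of $J$, the uniqueness of the minimizer from \Cref{prop:mitTikhonov}, and the Urysohn subsequence principle yields convergence of the whole sequence. The only cosmetic difference is that the paper compares $J(u_k)$ with an arbitrary feasible point of \eqref{eq:Peps}, whereas you compare directly with $u_\varepsilon$; both are valid.
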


\begin{proof}
    From Lemma~\ref{lem:barufeas} we know that the sequence $\{u_k\}$ admits a strong accumulation point $\bar u$ in $L^2(\Omega)$, which is feasible for \eqref{eq:Peps}.
    To show its optimality, consider an arbitrary $u \in L^2(\Omega)$ that is feasible for \eqref{eq:Peps}. 
	Then, by construction, it is also feasible for \eqref{eq:Pk} for each $k\in \N$ and the optimality of $u_k$ implies
	$J(u_k) \leq J(u)$. The continuity of $J$ thus gives
	\begin{equation*}
		J(\bar u) \leq \lim_{k\to\infty} J(u_k) \leq J(u)
	\end{equation*}
	and, since $u$ was an arbitrary feasible point for \eqref{eq:Peps}, this yields the optimality of $\bar u$ for \eqref{eq:Peps}.
	From Proposition~\ref{prop:mitTikhonov}, we know that \eqref{eq:Peps} is uniquely solvable  and thus $\bar u$ equals the unique solution $u_\varepsilon$ of \eqref{eq:Peps}.
    By the Urysohn subsequence principle, the whole sequence $\{u_k\}_{k \in \N}$ therefore converges strongly to the unique minimizer $u_\varepsilon$.
\end{proof}

\begin{remark}
    Let us assess the effort of Algorithm~\ref{alg:Reps}. In each iteration, one has to solve \eqref{eq:Pk}, which is a ``classical'' elliptic optimal control problem 
    with finitely many linear integral control constraints, and \eqref{eq:Qk}, which is an obstacle-type problem in the Hilbert space $\HH$.
    This means, in each iteration, two more or less standard problems have to be solved. We will come back to this point in Section~\ref{sec:semismooth}.
\end{remark}

\section{Optimality conditions for \eqref{eq:P}} \label{sec:optimality}

For the construction of an exact solution, we derive necessary and sufficient optimality conditions to \eqref{eq:P} following the lines of \cite{CK19}, 
where a similar problem is analyzed. For that purpose we rewrite \eqref{eq:P} as
\begin{equation}
    \eqref{eq:P} 
    \quad \Longleftrightarrow \quad    
	\min_{u \in \UU} \; J(u) + I_\BB(\nabla u) 
\end{equation}
where $\UU$ is as defined in \eqref{eq:defU}, $\nabla \in \LL(\BV(\Omega);\frakM(\Omega;\R^d))$ denotes the distributional gradient, and
\begin{equation*}
	\BB \coloneqq \{ \mu \in \frakM(\Omega; \R^d) \colon |\mu|(\Omega) \leq 1 \}.
\end{equation*}
Moreover, $I_\BB: \frakM(\Omega;\R^d)$ again denotes the indicator functional.
Note again that, due to $d\leq 4$, $L^{d/(d-1)}(\Omega) \embed H^{-1}(\Omega)$ such that $S$ and thus also $J$ are well defined on $\BV(\Omega)\embed \UU$.

In order to derive necessary and sufficient optimality conditions for \eqref{eq:P}, we follow the lines of \cite[Theorems 2 and 3]{CK19}.

\begin{theorem}\label{thm:noc}
    In addition to our standing assumptions, let $j$ be Fr\'echet-differentiable from $H^1_0(\Omega)$ to $\R$. Then $\bar u \in \UU$ is a minimizer of \eqref{eq:P}, 
    if and only if, there exist $\bar y \in H^1_0(\Omega)$, $\bar p \in H^1_0(\Omega)$ and $\bar\lambda  \in \frakM(\Omega;\R^d)^*$ such that the following optimality system is fulfilled:
	\begin{subequations}\label{eq:optimality}
		\begin{gather}
            - \laplace \bar y = \bar u + f \;\; \text{in } H^{-1}(\Omega), \quad 		
            - \laplace \bar p =  j'(\bar y)\;\; \text{in } H^{-1}(\Omega), \label{eq:adjpde}\\
            \nabla \bar u \in \BB,\quad \dual{\bar{\lambda}}{\mu - \nabla \bar{u}}_{\frakM^*, \frakM} \leq 0 \quad \forall \, \mu \in \BB, \label{eq:compl} \\
			\dual{\bar{\lambda}}{\nabla v}_{\frakM^*,\frakM} 
			+ \int_{\Omega}(\bar{p} + \alpha (\bar{u} - u_d) ) v \dd x = 0 \quad \forall \, v \in \UU. \label{eq:gradeq} 
		\end{gather}
	\end{subequations}
	If $\alpha = 0$ or (inclusive) $d\leq 2$, then \eqref{eq:compl} and \eqref{eq:gradeq} are equivalent to the existence of a function $\bar\Phi \in C_0(\Omega;\R^d)$ such that 
	\begin{subequations}\label{eq:optimalityC}
		\begin{gather}
            \nabla \bar u \in \BB,\quad \dual{\nabla v - \nabla \bar{u}}{\bar\Phi}_{\frakM, C_0} \leq 0 \quad \forall \, v \in \BV(\Omega) \colon \nabla v \in \BB,
            \tag{\ref{eq:compl}'} \label{eq:comp2l} \\
			\dual{\nabla v}{\bar\Phi}_{\frakM, C_0} 
			+ \int_{\Omega}(\bar{p} + \alpha (\bar{u} - u_d) ) v \dd x = 0 \quad \forall \, v \in \BV(\Omega). 
			\tag{\ref{eq:gradeq}'} \label{eq:gradeq2} 
		\end{gather}
	\end{subequations}
\end{theorem}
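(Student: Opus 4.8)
The plan is to treat \eqref{eq:P}, rewritten as $\min_{u\in\UU} J(u) + I_\BB(\nabla u)$, as a convex optimization problem and apply Fenchel--Rockafellar duality together with the standard sum rule for convex subdifferentials. First I would verify the qualification condition: since $u\equiv 0$ satisfies $\nabla u = 0 \in \BB$ and $I_\BB$ is continuous at $0$ (indeed $\BB$ has nonempty interior in $\frakM(\Omega;\R^d)$ in the weak-* sense, or one argues via the continuity of $J$ and finiteness of $I_\BB\circ\nabla$ at $0$), the Fenchel--Rockafellar / Attouch--Brezis type conditions hold, so that $\bar u$ is a minimizer if and only if $0 \in \partial\big(J + I_\BB\circ\nabla\big)(\bar u) = \partial J(\bar u) + \nabla^*\big(\partial I_\BB(\nabla\bar u)\big)$. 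Here $\nabla^* \in \LL(\frakM(\Omega;\R^d)^*, \UU^*)$ is the adjoint of $\nabla \in \LL(\UU,\frakM(\Omega;\R^d))$ and $\partial I_\BB(\nabla\bar u)$ is the normal cone to $\BB$ at $\nabla\bar u$, which is exactly the set of $\bar\lambda \in \frakM(\Omega;\R^d)^*$ with $\dual{\bar\lambda}{\mu - \nabla\bar u} \le 0$ for all $\mu \in \BB$ — this gives \eqref{eq:compl}. Computing $\partial J(\bar u)$ via the chain rule, using the Fr\'echet differentiability of $j$, the adjoint state $\bar p := S^* j'(\bar y) = S j'(\bar y)$ (self-adjointness of $(-\laplace)^{-1}$), and the derivative $\alpha(\bar u - u_d)$ of the Tikhonov term, one sees that $0 \in \partial J(\bar u) + \nabla^*\bar\lambda$ unfolds precisely to \eqref{eq:gradeq} together with the two PDEs in \eqref{eq:adjpde}. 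The existence and feasibility of $\bar u$ itself is guaranteed by Corollary~\ref{cor:exP} under the standing assumptions, so the forward and backward directions both reduce to this subdifferential identity, whose sufficiency side is immediate from convexity.

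The second, more delicate part is the equivalence of \eqref{eq:compl}--\eqref{eq:gradeq} with \eqref{eq:comp2l}--\eqref{eq:gradeq2} under the extra hypothesis $\alpha = 0$ or $d \le 2$. The point is to replace the abstract multiplier $\bar\lambda \in \frakM(\Omega;\R^d)^*$ by a genuine continuous vector field $\bar\Phi \in C_0(\Omega;\R^d)$, i.e. to show $\bar\lambda$ can be represented by integration against $\bar\Phi$ in the duality pairing $\dual{\cdot}{\cdot}_{\frakM, C_0}$. One direction is trivial: any $\bar\Phi \in C_0(\Omega;\R^d)$ induces such a $\bar\lambda$. For the converse I would follow \cite[Theorems~2 and 3]{CK19}: from the gradient equation \eqref{eq:gradeq} one reads off that $\nabla^*\bar\lambda = -(\bar p + \alpha(\bar u - u_d))$ as an element of $\UU^*$, and one must show this forces $\bar\lambda$ to lie in the (pre-)image of $C_0(\Omega;\R^d)$ under the canonical embedding $C_0(\Omega;\R^d)\hookrightarrow \frakM(\Omega;\R^d)^*$. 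The regularity hypothesis enters exactly here: when $\alpha = 0$, the right-hand side $\bar p$ lies in $H^1_0(\Omega)$, and by elliptic regularity / the structure of the adjoint equation one gains enough regularity; when $d\le 2$, the Sobolev embedding $H^1_0(\Omega)\embed L^q(\Omega)$ for all $q<\infty$ plus $\bar u - u_d \in L^2(\Omega)$ again yields that $\bar p + \alpha(\bar u - u_d)$ is integrable against $\BV$ functions in a way compatible with a $C_0$ representative of the divergence. Concretely, one constructs $\bar\Phi$ as a solution of $\div\bar\Phi = \bar p + \alpha(\bar u - u_d)$ (with vanishing normal trace, so $\bar\Phi \in H_0(\div;\Omega)$), checks $\bar\Phi \in C_0(\Omega;\R^d)$ using the dimension/regularity assumption, and verifies via integration by parts that $\dual{\bar\lambda}{\nabla v} = \dual{\nabla v}{\bar\Phi}_{\frakM, C_0}$ for all $v\in\BV(\Omega)$, which transfers \eqref{eq:compl} into \eqref{eq:comp2l} and \eqref{eq:gradeq} into \eqref{eq:gradeq2}.

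The main obstacle is precisely this representation step: showing that the abstract biduality multiplier $\bar\lambda \in \frakM(\Omega;\R^d)^*$ — a priori an element of a huge, badly-behaved bidual — actually ``lives in'' $C_0(\Omega;\R^d)$, and that the pairing with distributional gradients of $\BV$ functions is well defined and reproduces \eqref{eq:comp2l}--\eqref{eq:gradeq2}. This is where the hypothesis $\alpha = 0$ or $d\le 2$ is genuinely used, via a Sobolev-embedding / div-equation solvability argument (in the spirit of the Bogovskii-type constructions already used in the proof of Proposition~\ref{prop:Lqest}, or directly as in \cite{CK19}), and I would expect the bookkeeping of the various duality pairings ($\frakM^*$-$\frakM$ versus $\frakM$-$C_0$, and the action of $\nabla^*$ on $\UU^*$) to be the part requiring the most care. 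The remaining verifications — the Slater/qualification condition, the chain rule for $\partial(J\circ S \circ (\cdot + f))$, the identification of the normal cone, and the sufficiency direction — are routine once the duality framework of \cite{CK19} is in place.
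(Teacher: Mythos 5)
Your first half coincides with the paper's argument: convexity of \eqref{eq:P}, the Slater point $u\equiv 0$ with $\nabla u=0\in\interior(\BB)$ to justify the sum and chain rules for convex subdifferentials, the identification of $\partial I_\BB(\nabla\bar u)$ with the normal cone (which is \eqref{eq:compl}) and of $\partial J(\bar u)$ via $\bar p=S^*j'(\bar y)$ (which gives \eqref{eq:adjpde} and \eqref{eq:gradeq}). That part is fine and essentially identical to the paper.

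The gap is in your representation step. Your concrete plan --- solve $\div\bar\Phi=\bar p+\alpha(\bar u-u_d)$ with vanishing normal trace and ``check $\bar\Phi\in C_0(\Omega;\R^d)$ using the dimension/regularity assumption'' --- does not go through: under the hypothesis $\alpha=0$ or $d\le 2$ the right-hand side is only in $L^{d}(\Omega)$ (for $d=2$, $\alpha>0$ it is merely $L^2$ because of the term $\alpha(\bar u-u_d)$), which is exactly the critical exponent. A Bogovskii-type construction then yields a field in $W^{1,d}_0(\Omega;\R^d)$, which does \emph{not} embed into the continuous functions, and no Sobolev/elliptic-regularity argument closes this borderline case; one would need a Bourgain--Brezis-type theorem on $\div\Phi=f$ with $f\in L^d$ and continuous $\Phi$, a much deeper tool, and even then the $C_0$ boundary behaviour and the exact normal-trace condition are delicate. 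The paper never solves a divergence equation: it defines $\ell(\nabla v):=\dual{\bar\lambda}{\nabla v}_{\frakM^*,\frakM}$ on the subspace $V=\{\nabla v:v\in\BV(\Omega)\}$, uses \eqref{eq:gradeq} together with precisely the integrability $\bar p+\alpha(\bar u-u_d)\in L^{d}(\Omega)$ and $\BV(\Omega)\embed L^{d/(d-1)}(\Omega)$ to show that $\ell^{-1}(0)$ is weak-$\ast$ sequentially closed, upgrades this to weak-$\ast$ closedness (Krein--\v{S}mulian-type results), and then extends $\ell$ by Hahn--Banach to a weak-$\ast$ continuous functional on $\frakM(\Omega;\R^d)$, which is identified with $\bar\Phi\in C_0(\Omega;\R^d)$. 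A secondary flaw: the converse direction is not ``trivial'' via the canonical embedding $C_0(\Omega;\R^d)\hookrightarrow\frakM(\Omega;\R^d)^*$, because taking $\bar\lambda$ to be integration against $\bar\Phi$ would force \eqref{eq:compl} to hold for \emph{all} $\mu\in\BB$, i.e.\ $\|\bar\Phi\|_\infty\le\dual{\nabla\bar u}{\bar\Phi}_{\frakM,C_0}$, which does not follow from \eqref{eq:comp2l} (that inequality only controls gradient measures); the paper instead extends the functional from $V$ by Hahn--Banach with the sublinear majorant $\dual{\nabla\bar u}{\bar\Phi}_{\frakM,C_0}\,\|\cdot\|_{\frakM(\Omega;\R^d)}$ to obtain a multiplier satisfying \eqref{eq:compl} on all of $\BB$.
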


\begin{proof}
    Since \eqref{eq:P} is a convex problem, $\bar u$ is a minimizer of \eqref{eq:P}, if and only if, 
    \begin{equation}\label{eq:fermat1}
        0 \in \partial (J + I_{\BB}\circ \nabla)(\bar u).
    \end{equation}
    By the chain rule $J$ is Fr\'echet-differentiable from $\UU$ to $\R$ with derivative 
    \begin{equation*}
        J'(u) = S^* j'(S(u + f)) + \alpha \, u.
    \end{equation*}
    Since $u \equiv 0 $ is a Slater point for \eqref{eq:P}, i.e., $\nabla u \in \interior(\BB) = \interior(\dom(I_{\BB}))$, we are allowed to apply the sum and chain rule 
    for convex subdifferentials, which implies that \eqref{eq:fermat1} is equivalent to
    \begin{equation}\label{eq:fermat2}
        0 \in  S^* j'(S(\bar u + f)) + \alpha \, \bar u + \nabla^* \partial I_{\BB}(\nabla \bar u).
    \end{equation}
    If we define $\bar y \coloneqq S(\bar u + f)$ and $\bar p \coloneqq S^* j'(\bar y) \in H^1_0(\Omega)$, then standard adjoint calculus gives that $\bar p$ solves 
    the second Poisson equation in \eqref{eq:adjpde}. With this definition of $\bar p$, \eqref{eq:fermat2} is equivalent to the existence of $\bar\lambda \in \frakM(\Omega;\R^d)^*$
    such that \eqref{eq:gradeq} holds and $\bar\lambda \in \partial I_{\BB}(\nabla \bar u)$. The latter is equivalent to the variational inequality in \eqref{eq:compl}.
    All in all, we have seen that the optimality system \eqref{eq:optimality} is equivalent to \eqref{eq:fermat1}, which implies its necessity and sufficiency for optimality.
	
    The second assertion concerning the existence of a continuous function $\bar\Phi$ can be proven completely along the lines of \cite[Theorem~3]{CK19}. 
    For convenience of the reader, we shortly sketch the arguments. In order to show that $\bar\lambda$ can be identified with an element of the pre-dual space 
    $C_0(\Omega;\R^d)$, if $\alpha = 0$ or $d\leq 2$, we define the linear functional $\ell$ on the subspace 
    $V \coloneqq \{\nabla v \colon v\in \BV(\Omega)\}$ of $\frakM(\Omega;\R^d)$ as follows
    \begin{equation*}
        \ell : V \to \R, \quad \ell(\nabla v) = \dual{\bar\lambda}{\nabla v}_{\frakM^*, \frakM} .
    \end{equation*}
    We show that $\ell$ is continuous w.r.t.\ weak-$\ast$ convergence in $\frakM(\Omega;\R^d)$. This will in turn imply that it can be extended to a 
    weakly-$\ast$ continuous functional on the whole space $\frakM(\Omega;\R^d)$, which, by weak-$\ast$ continuity, can then be identified with an 
    element of the pre-dual space.
	In order to show that $\ell$ is weakly-$\ast$ continuous on its domain $V$, we prove that $\ell^{-1}(0)$ is weakly-$\ast$ 	sequentially closed.
	For this purpose, let $\{\mu_k\} \subset \ell^{-1}(0) \subset V$ be arbitrary with $\mu_k \weak^* \mu$ in $\frakM(\Omega;\R^d)$. 
    Then, for all $k\in \N$, the definition of $V$ gives the existence of $v_k \in \BV(\Omega)$ such that $\mu_k = \nabla v_k$ and thus $\mu_k = \nabla v_k^0$. 
    Since 
    \begin{equation*}
        \BV(\Omega) \ni v \mapsto \frac{1}{|\Omega|} \Big| \int_\Omega v\,\d x \Big| + \TV(v) \in \R
    \end{equation*}
    defines a norm on $\BV(\Omega)$, $\{v_k^0\}$ is bounded in $\BV(\Omega) \embed L^q(\Omega)$,  $q = \frac{d}{d-1}$, cf.\ Remark~\ref{rem:limitcase}.
    Thus there exists a subsequence, denoted by the same symbol to ease notation, and a function $v^0\in \BV(\Omega)$ such that 
    \begin{equation}\label{eq:weakv0}
        \nabla v_k^0 \weak^* \nabla v^0 \;\; \text{in } \frakM(\Omega;\R^d), \quad 
        v_k^0 \weak v^0 \;\; \text{in } L^{q}(\Omega)
        \quad \text{as } k \to \infty.
    \end{equation}
    As the weak-$\ast$ limit is unique, we find $\mu = \nabla v_0$ so that $\mu \in V$. Moreover, since $\bar p \in H^1_0(\Omega)$, Sobolev embeddings imply that
    \begin{equation*}
        \bar{p} + \alpha (\bar{u} - u_d) \in 
        \begin{cases}
            L^2(\Omega), & \text{if } \alpha > 0,\\
            L^r(\Omega), & \text{in } \alpha = 0,
        \end{cases}
        \quad \text{with} \quad 
        r = 
        \begin{cases}
            \frac{2d}{d-2}, & \text{if } d > 2,\\
            \in [1,\infty), & \text{if } d = 2,\\
            \infty, & \text{if } d = 1,
        \end{cases}
    \end{equation*}
    and therefore $\bar p + \alpha (\bar{u} - u_d) \in  L^d(\Omega) = L^{q'}(\Omega)$, if $\alpha = 0$ and $d \leq 4$ or $\alpha > 0$ and $d\leq 2$. 
    (Note that $d\leq 4$ is a standing assumption anyway.) Together with the weak convergence in \eqref{eq:weakv0}, this yields
    \begin{equation*}
    \begin{aligned}
        0 = \ell(\mu_k) = \dual{\bar\lambda}{\nabla v_k}_{\frakM^*, \frakM} 
        & = - \int_{\Omega}(\bar{p} + \alpha (\bar{u} - u_d) ) v_k^0 \dd x \\
        & \to - \int_{\Omega}(\bar{p} + \alpha (\bar{u} - u_d) ) v^0 \dd x = \dual{\bar\lambda}{\nabla v^0}_{\frakM^*, \frakM} = \ell(\mu).
    \end{aligned}
    \end{equation*}        
    This shows $\mu \in \ell^{-1}(0)$ and proves the weak-$\ast$ sequential closedness of $\ell^{-1}(0)$. 
    From \cite[Theorem~5.1]{Con90} and \cite[Theorem~1.6.8]{Schi07}, it then follows that $\ell^{-1}(0)$ is even weak-$\ast$ closed, giving in turn that $\ell$ is weak-$\ast$ continuous. 
    The Hahn-Banach theorem then implies that there is a weak-$\ast$ continuous linear extension on the whole space $\frakM(\Omega;\R^d)$, 
    cf., e.g., \cite[Theorem~3.6]{Rud91}.
    Since it is weak-$\ast$ continuous, this extension can be identified with a function $\bar\Phi$ from the pre-dual space $C_0(\Omega;\R^d)$, 
    see \cite[Proposition~3.14]{Bre10}.
    In this way, we have proven the existence of a function $\bar\Phi\in C_0(\Omega;\R^d)$ such that 
    \begin{equation*}
        \dual{\mu}{\bar\Phi}_{\frakM, C_0} = \ell(\mu) = \dual{\bar\lambda}{\mu}_{\frakM^*, \frakM} \quad \forall\, \mu \in V. 
    \end{equation*}
	Using this identity, we deduce \eqref{eq:comp2l}--\eqref{eq:gradeq2} from \eqref{eq:compl}--\eqref{eq:gradeq}. Note in this context that, 
	if $d\leq 2$, then $\BV(\Omega) \embed L^2(\Omega)$ so that, in this case, $\UU = \BV(\Omega)$ even if $\alpha > 0$.
	
	To show the reverse implication, let us assume that $\bar\Phi \in C_0(\Omega;\R^d)$ satisfies \eqref{eq:comp2l}--\eqref{eq:gradeq2}. 
	This time, we define the following linear mapping:
	\begin{equation*}
	    \ell : V \to \R, \quad \ell(\nabla v) = \dual{\nabla v}{\bar\Phi}_{\frakM, C_0}.
	\end{equation*}
	Then \eqref{eq:comp2l} implies that 
	\begin{equation*}
	    \ell(\mu) \leq \dual{\nabla \bar u}{\bar\Phi}_{\frakM, C_0} \, \|\mu\|_{\frakM(\Omega;\R^d)} \quad \forall\, \mu \in V.
	\end{equation*}
    and thus, the Hahn-Banach theorem (this time applied in the strong topology) gives the existence of 
    a linear and bounded functional $\bar\lambda \in \frakM(\Omega;\R^d)$ such that 
    \begin{align}
        & \dual{\bar\lambda}{\mu}_{\frakM^*, \frakM} = \dual{\mu}{\bar\Phi}_{\frakM, C_0} \quad \forall\, \mu \in V \label{eq:hahnbanach1}\\
        \quad \text{and} \quad
        & \dual{\bar\lambda}{\mu}_{\frakM^*, \frakM} \leq \dual{\nabla \bar u}{\bar\Phi}_{\frakM, C_0} \, \|\mu\|_{\frakM(\Omega;\R^d)} \quad \forall\, \mu \in \frakM(\Omega;\R^d) .
        \label{eq:hahnbanach2}
    \end{align}
    Inserting \eqref{eq:hahnbanach1} in \eqref{eq:gradeq2} implies \eqref{eq:gradeq}.
    Finally, \eqref{eq:compl} directly follows from \eqref{eq:hahnbanach2} and \eqref{eq:hahnbanach1} by taking into account that $\|\mu\|_{\frakM(\Omega;\R^d)} = |\mu|(\Omega)$.	
\end{proof}

\subsection{Construction of an exact optimal solution} \label{sec:exact_solution}

In order to test our algorithmic approach, we construct an exact solution based on the optimality system in Theorem~\ref{thm:noc}. 
We choose a two-dimensional domain, namely $\Omega \coloneqq (0,1)^2$, and the usual $L^2$-tracking-type objective for $j$, i.e.,
\begin{equation}\label{eq:tracking}
    j(y) \coloneqq \frac{1}{2}\, \| y - y_d \|_{L^2(\Omega)}^2 
\end{equation}
with a given desired state $y_d \in L^2(\Omega)$. Together with the additional inhomogeneity $f$ and the desired control $u_d$, the desired state $y_d$ 
allows us to construct an exact solution of \eqref{eq:P}, based on the necessary and sufficient optimality conditions from Theorem~\ref{thm:noc}, more precisely 
\eqref{eq:adjpde}, \eqref{eq:comp2l}, and \eqref{eq:gradeq2}. These read in case of \eqref{eq:tracking} as follows:
$\bar u \in \UU$ is optimal, iff there exist $\bar y, \bar p \in H^1_0(\Omega)$, and $\bar \Phi \in C_0(\Omega;\R^d)$ such that
\begin{subequations}\label{eq:nocex}
    \begin{gather}
        - \laplace \bar y = \bar u + f \;\; \text{in } H^{-1}(\Omega), \quad 		
        - \laplace \bar p =  \bar y - y_d \;\; \text{in } H^{-1}(\Omega), \label{eq:adjex}\\
        \TV(\bar u) \leq 1, \quad \dual{\nabla v - \nabla \bar{u}}{\bar\Phi}_{\frakM, C_0} \leq 0 \quad \forall \, v \in \BV(\Omega) \colon \TV(v) \leq 1, \label{eq:complex} \\
        \dual{\nabla v}{\bar\Phi}_{\frakM, C_0} 
        + \int_{\Omega}(\bar{p} + \alpha (\bar{u} - u_d) ) v \dd x = 0 \quad \forall \, v \in \BV(\Omega). \label{eq:gradeqex} 
    \end{gather}
\end{subequations}
%
We begin the construction of a solution of this system defining an optimal control with minimal regularity.
For this purpose, let us define $B \coloneqq B_r((0.5,0.5)^T) \subset \Omega$ as the ball around $(0.5,0.5)^T$ with radius $r \in \R_{>0}$ 
and perimeter $P:=P(B)=2 \pi r$. We then set 
\begin{equation*}
	\bar{u}(x) \coloneqq 
	\begin{cases}
		\frac{1}{P}, & \text{if } x \in B, \\
		0, & \text{if } x \in \Omega \setminus B.
	\end{cases}
\end{equation*}
We then have $\nabla \bar{u} = - \frac{1}{P}\, \nu_B\,  \HH^{1} \mres \partial B$, where $\nu_B$ denotes the outer unit normal to $B$ and $\HH^1$ is the one-dimensional 
Hausdorff measure. Note that $\TV(\bar{u}) \leq 1$. 
Next we construct a multiplier $\bar\Phi \in C_0(\Omega;\R^d)$ such that the complementarity relation in \eqref{eq:complex} is fulfilled.
To this end, assume for a moment that $\bar\Phi$ is chosen such that $\bar\Phi |_{\partial B} = - s\, \nu_B$ with $s \in \R_{>0}$. Then we obtain for every $\mu \in \frakM(\Omega;\R^d)$ 
with $|\mu|(\Omega) \leq 1$ that  
\begin{equation*}
	\langle \bar\Phi, \mu - \nabla \bar{u} \rangle_{C_0, \frakM} 
	= \int_\Omega \bar\Phi \dd \mu + \frac{1}{P} \int_{\partial B} \bar\Phi \cdot \nu_B \dd \HH^1 
	\leq \| \bar\Phi \|_\infty \, \underbrace{| \mu | (\Omega)}_{\leq 1} - \frac{s}{P} \, \HH^1 (\partial B) \leq \| \bar\Phi \|_\infty - s.
\end{equation*}
Hence, if we choose $\bar\Phi$ such that $\| \bar\Phi \|_\infty \leq s$, then \eqref{eq:complex} is met.
To this end, we set $r = \frac{1}{4}$ and 
\begin{equation}\label{eq:Phiex}
\begin{aligned}
    \bar\Phi(x) \coloneqq \Phi_0\big( x-(0.5,0.5)^\top \big)
    \quad & \text{with} & & 
	\Phi_0 (x) \coloneqq 
	\begin{cases}
		- s \, \Psi(\| x \|_2) \frac{x}{\| x \|_2}, & \text{if } \| x \|_2 \in (\frac{3}{16},\frac{5}{16}),\\
		0, & \text{else}
	\end{cases}\\
	& \text{and} & &
    \Psi \in C^1_c((0,\tfrac{1}{2})), \;\; \Psi(\tfrac{1}{4}) = 1, \;\; | \Psi(r) | \leq 1\;\; \forall \,r \in (0,\tfrac{1}{2}).
\end{aligned}
\end{equation}
For example, we may choose 
\begin{equation*}
	\Psi(r) \coloneqq \begin{cases}
		-8192\, r^3 + 5376\, r^2 - 1152\, r + 81, & \text{if } r \in [\frac{3}{16},\frac{1}{4}], \\
		8192\, r^3 - 6912 \, r^2 + 1920 \, r - 175, & \text{if } r \in (\frac{1}{4},\frac{5}{16}] \\
		0, & \text{if } r \in (0,\frac{1}{2}) \setminus [\frac{3}{16},\frac{5}{16}].
	\end{cases}
\end{equation*}
Note that, by construction, $\bar\Phi \in C^1_c(\Omega;\R^d)$ so that \eqref{eq:gradeqex} is equivalent to
\begin{equation}\label{eq:gradex2}\tag{\ref{eq:gradeqex}'}
    - \div \bar\Phi + \bar p + \alpha(\bar u - u_d) = 0 \quad \text{a.e.\ in } \Omega.
\end{equation}
For the optimal state and the adjoint state, we choose $\bar{y}(x_1,x_2) = \bar{p}(x_1,x_2) \coloneqq 0.1\, \sin (2 \pi\, x_1) \sin (2 \pi\, x_2)$ 
so that the homogeneous Dirichlet conditions are fulfilled. To fulfill the PDEs in \eqref{eq:adjex}, $f$ and $y_d$ are set to 
$f \coloneqq - \Delta \bar{y} - \bar{u}$ and $y_d \coloneqq \Delta \bar{p} + \bar{y}$. Note that $\bar y, \bar p \in H^2(\Omega)$ such that 
$f, y_d\in L^2(\Omega)$.
Finally, in order to satisfy \eqref{eq:gradex2}, we set $u_d \coloneqq \bar{u} + \frac{1}{\alpha} (\bar{p} - \div \bar\Phi) \in L^2(\Omega)$.

For our numerical experiments in the next section, $s$ from \eqref{eq:Phiex} is set to $s = 0.01$ and we choose $\alpha=1$ for the Tikhonov parameter.

\section{Numerical examples} \label{sec:numerics}

\subsection{Numerical realization}\label{sec:semismooth}

In our numerical example, we choose the setting from Section~\ref{sec:exact_solution}, i.e., the domain is set to  
$\Omega = (0,1)^2$ and we choose the tracking type objective from \eqref{eq:tracking}. For the Hilbert space $\HH$, we choose $\HH = H^1_0(\Omega;\R^2)$ and 
the bilinear form $a: \HH \times \HH \to \R$ is set to 
\begin{align*}
	a[\varphi,\varphi] = \int_\Omega \nabla^{\textup s}\varphi : \C \, \nabla^{\textup s}\varphi \dd x,
\end{align*}
where $\nabla^{\textup s}\varphi = \frac{1}{2}(\nabla \varphi + \nabla \varphi^\top)$ is the symmetrized Jacobian and $\C \in \LL(\R^{d \times d}_{\textup{sym}}, \R^{d \times d}_{\textup{sym}})$ 
is a linear and coercive 4th order elasticity tensor with Lam\'e-constants $\mu = \frac{E}{2(1+\nu)}$ and $\lambda = \frac{E \nu}{(1+\nu)(1-2\nu)}$ with elasticity module $E=2900$ and shear module $\nu = 0.4$.

In each iteration of the outer approximation algorithm \ref{alg:Reps}, the two optimization problems \eqref{eq:Pk} and \eqref{eq:Qk} have to be solved. 
The first one is a linear-quadratic optimal control problem with finitely many integral inequality constraints, whose necessary and sufficient optimality conditions read as follows:
$\bar u \in \UU$ is optimal, iff there exist $\bar y, \bar p \in H^1_0(\Omega)$, and $\mu_1,\dots,\mu_k \in \R$ such that
\begin{subequations}\label{eq:nock}
    \begin{gather}
        - \laplace y_k = u_k + f \;\; \text{in } H^{-1}(\Omega), \quad 		
        - \laplace p_k =  y_k - y_d \;\; \text{in } H^{-1}(\Omega), \\
        p_k + \alpha(u_k - u_d) + \sum_{i=1}^k \mu_i \,\div \varphi_i = 0, \label{eq:gradeqk}\\
        \mu_i \geq 0, \quad \mu_i \Big( \int_\Omega u_k \,\div \varphi_i \, \d x - \varepsilon\, a[\varphi_i, \varphi_i]  \Big)= 0,
        \quad \int_\Omega u_k \,\div \varphi_i \, \d x - \varepsilon\, a[\varphi_i, \varphi_i]  \leq 1 \quad i = 1, \ldots, k.
    \end{gather}
\end{subequations}
Herein, $\mu_1, \ldots, \mu_k \in \R$ denote the Lagrange multipliers for the inequality constraints. Note that $\varphi_1, \ldots, \varphi_k$ are given data within the 
computation of $u_k$. 
In order to compute the next  cutting plane, \eqref{eq:Qk} has to be solved, whose necessary and sufficient conditions are 
given by the following variational inequality (VI)
\begin{equation}\label{eq:obst}
    \varphi_{k+1} \in \KK,\quad 
    \varepsilon\, a[\varphi_{k+1}, v - \varphi_{k+1}] = \int_\Omega u_k\, \div (v - \varphi_{k+1}) \,\d x \quad \forall\, v\in \KK,
\end{equation}
where $\KK \coloneqq \{ v\in H^1_0(\Omega;\R^d) \colon \|v\|_\infty \leq 1\}$. We observe that \eqref{eq:obst} is a VI of the first kind, which is similar 
to the classical obstacle problem.  Note that, this time, $u_k$ is given data.

In order to solve \eqref{eq:nock} and \eqref{eq:obst} numerically, 
we discretize the equations by means of conforming finite elements on a fixed triangular mesh. To be more precise, we use first-order Lagrangian finite elements, i.e., 
continuous and piecewise linear ansatz functions, for $y_k$, $p_k$, $\varphi_k$, and the respective test functions, while $u_k$ is discretized with piecewise 
constant ansatz functions. Observe that this choice is in accordance with \eqref{eq:gradeqk}, since $\div \varphi_i$ is piecewise constant, if 
$\varphi_i$ is discretized by means of first-order Lagrangian finite elements. The additional inhomogeneity $f$ as well as the desired control $u_d$ are also discretized 
with piecewise constants, while we employ the first-order Lagrange interpolation for the desired state $y_d$.
For the mesh, we use a triangulation of Friedrichs-Keller type with mesh size $h = \frac{\sqrt{2}}{50}$. 

Using a nonlinear complimentarity function, \eqref{eq:nock} and \eqref{eq:obst} can be reformulated as nonsmooth equations, that can be solved by 
the semismooth Newton method. In case of \eqref{eq:nock}, this even works in function space, cf., e.g., \cite[Section~5.2]{Ulb11} and \cite{IK08}. 
Concerning \eqref{eq:obst}, an additional smoothing step or regularization 
is necessary to obtain the norm gap that is needed for Newton differentiability, see for instance \cite[Section~9.2]{Ulb11} for details. For the solution of the discretized finite dimensional 
counterpart of \eqref{eq:obst}, this smoothing step is however not necessary, so we do without this additional step, as the mesh independence of the method 
is not in the focus of our work. We implemented the semismooth Newton method to solve the discretized counterparts to \eqref{eq:nock} and \eqref{eq:obst} 
in form of an active set method, see \cite{HIK02}.

As expected, driving $\eps \searrow 0$ is a critical issue, 
because large $\eps$ lead to a large difference between $\TV_\eps(u)$ and $\TV(u)$ and small $\eps$ lead to a small coercivity constant 
of the bilinear form in the left hand side of the VI in \eqref{eq:obst}, which results in a slow convergence of the active set method for \eqref{eq:obst}. 
To resolve this issue, we apply a path-following approach for the reduction of the regularization parameter $\varepsilon$. 
In our numerical tests, it turned out to be advantageous to reduce $\varepsilon$ in the outer loop of Algorithm~\ref{alg:Reps} and not during the active set method for \eqref{eq:obst}
in step \ref{algReps3} of Algorithm~\ref{alg:Reps}. To be more precise, we reduce $\eps$ in each iteration of \cref{alg:Reps} until the desired value of $\eps$ is reached 
and use the former solution of \eqref{eq:obst} as initial guess for the active set method for \eqref{eq:obst} in the next iterate. 
Moreover, we update the right-hand sides of the former computed cutting planes, i.e., $F_\varepsilon(u, \varphi_i) \d x \leq 1$
for all $i \in \N$ with $i \leq k$, in each iteration by inserting the current value of $\eps$ to obtain tighter cutting planes.
This procedure reduces the number of iterations that are needed by the semismooth Newton method to solve \eqref{eq:obst} significantly.

We applied algorithm \cref{alg:Reps} to two instances of \eqref{eq:P}. The results are presented in the next two sections.

\subsection{Numerical example with known exact optimal solution} \label{sec:numeric_exact_solution}
For our first numerical example, we use the exact optimal solution to \eqref{eq:P} that is determined in \cref{sec:exact_solution}. 
We start \Cref{alg:Reps} with the choice $\eps = 10^{-5}$ and reduce $\eps$ in each iteration of \cref{alg:Reps} by the factor $0.5$ 
until we reach the desired value of $\eps = \num{7.8e-08}$. Only after that, we use the termination criterion from \cref{alg:Reps} with a tolerance of $\texttt{tol} = 10^{-2}$, 
that is, we let the algorithm terminate after the current iterate $u_k$ fulfills $\TV_\eps(u_k) = F_\eps(u_k,\phi_{k+1}) \leq 1 + \texttt{tol}$. 
We initialize the active set method for the solution of \eqref{eq:Pk} and \eqref{eq:Qk} in the first iteration of \cref{alg:Reps} with $u_{\text{init}} \equiv 0$ and $\phi_{\text{init}} \equiv 0$, 
respectively, and with the optimal solutions $u_{k-1}$ and $\phi_k$ from the former iteration of \cref{alg:Reps} in the subsequent iterations. 
\Cref{alg:Reps} has performed $8$ iterations until the termination criterion was fulfilled. The results are stated in \cref{tab:exact_solution}, where \texttt{it}\eqref{eq:Pk} and 
\texttt{it}\eqref{eq:Qk} denote the number of iterations the active set method has performed to solve \eqref{eq:Pk} and \eqref{eq:Qk} in the current iteration of \cref{alg:Reps}.
Moreover, \texttt{err} denotes the relative error in the control and \texttt{eoc} the experimental order of convergence w.r.t.\ $\varepsilon$, i.e., 
\begin{equation*}
    \texttt{err}_k \coloneqq \frac{\| u_k - \bar{u}\|_{L^2(\Omega)}}{\| \bar{u} \|_{L^2(\Omega)}}, \quad 
    \texttt{eoc}_k \coloneqq \frac{\log(\texttt{err}_{k-1}) - \log(\texttt{err}_k)}{\log(\varepsilon_{k-1}) - \log(\varepsilon_k)}.
\end{equation*}
Moreover, we have computed a lower bound for $\TV(u_k)$ by
\begin{align*}
	 \TV(u_k) \geq \int_{\Omega} u_k \div \phi_{k+1} \dd x = \TV_\eps(u_k) + \frac{\eps}{2} a[\phi_{k+1},\phi_{k+1}].
\end{align*}

\begin{table}[h!]
	\centering
	\caption{Results obtained by applying \cref{alg:Reps} to the instance from \cref{sec:numeric_exact_solution} with an additional reduction of $\eps$ in each iteration.}	
    \vspace*{1ex}
	\resizebox{\textwidth}{!}{
		\begin{tabular}{c|cccccccc}
			\toprule
			$k$ & $\eps $ & $J(u_k)$ & \texttt{it}\eqref{eq:Pk} & \texttt{it}\eqref{eq:Qk} & $\TV_\eps(u_k)$ & $\TV(u_k) \geq$ & 
			\texttt{err} & \texttt{eoc} \\
			\midrule
			0 & \num{1.0e-05} & 7.753 & 1 & 6 & 0.69112 & 1.22532 & \num{2.7856e-01} &  -- \\
			1 & \num{5.0e-06} & 7.753 & 1 & 4 & 0.97830 & 1.30346 & \num{2.7856e-01} &  0.0 \\ 
			2 & \num{2.5e-06} & 7.753 & 3 & 3 & 1.01212 & 1.22899 & \num{1.9501e-01} &  0.5144 \\
			3 & \num{1.3e-06} & 7.754 & 5 & 2 & 1.00357 & 1.13971 & \num{1.2431e-01} &  0.6886 \\
			4 & \num{6.3e-07} & 7.755 & 5 & 3 & 1.00237 & 1.08947 & \num{8.5259e-02} &  0.5205 \\ 
			5 & \num{3.1e-07} & 7.755 & 4 & 4 & 1.00079 & 1.05069 & \num{5.9808e-02} &  0.5000 \\
			6 & \num{1.6e-07} & 7.756 & 5 & 3 & 1.00147 & 1.03463 & \num{5.1465e-02} &  0.2272 \\
			7 & \num{7.8e-08} & 7.756 & 4 & 4 & 1.00574 & 1.03226 & \num{4.7437e-02} & 0.1134 \\
			\bottomrule
		\end{tabular}
	}\label{tab:exact_solution}
\end{table}

\begin{figure}[h!]
	\centering
	\begin{minipage}{\textwidth}
		\centering
		\begin{minipage}[][][b]{0.49\textwidth}
			\centering
			\subfigure[lof][Exact optimal solution $\bar{u}$]{\includegraphics[height=4cm]{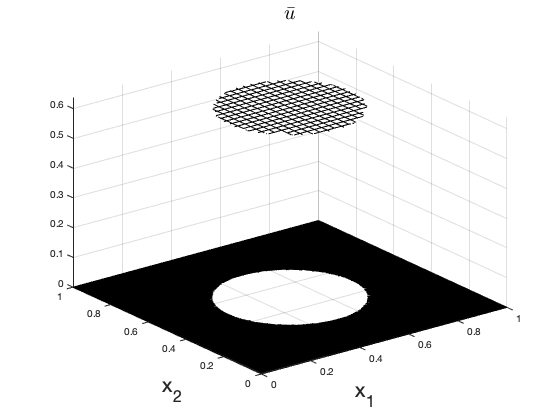}}
		\end{minipage}
		\begin{minipage}[][][b]{0.49\textwidth}
			\centering
			\subfigure[lof][Optimal solution $u_k$]{\includegraphics[height=4cm]{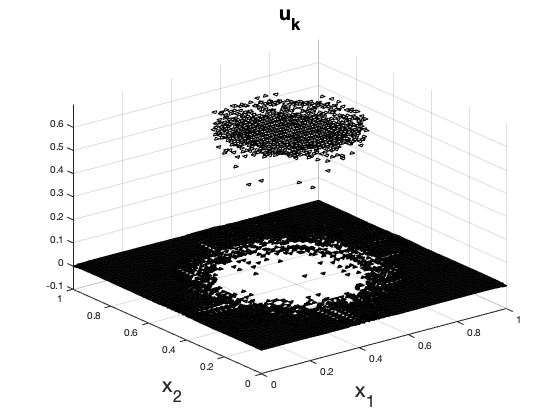}}
		\end{minipage}
	\end{minipage}
	\caption{Plots of the exact solution $\bar{u}$ to \eqref{eq:P} corresponding to the instance from \cref{sec:numeric_exact_solution} and the associated solution $u_k$ to \eqref{eq:Peps} with $\eps = \num{7.8e-08}$ obtained by applying \cref{alg:Reps}.}
	\label{fig:exact_solution}
\end{figure}

We observe that, until iteration $k=5$, the experimental order of convergence is in good agreement with the theoretical predictions of Theorem~\ref{thm:convrate}
and Corollary~\ref{cor:convrate}, respectively.
Note that the assumptions of Corollary~\ref{cor:convrate} are fulfilled, since the exact solution from Section~\ref{sec:exact_solution} satisfies 
$\bar\Phi \in H^1_0(\Omega;\R^d) = \HH$.
An explanation for the decrease of the convergence rate in the last two iterations is the additional error induced by the discretization, which only becomes visible for small values of $\varepsilon$.

\subsection{Numerical example without known exact optimal solution} \label{sec:numeric_without_exact_solution}

As a second numerical example, we consider $f\equiv 0$ and the functions $u_d, y_d \in C^\infty_0(\Omega)$ defined by
\begin{equation*}
	u_d(x) \coloneqq 2\, c\, \pi^2\, \sin(\pi x_1) \cos(\pi x_2)
	\quad \text{and} \quad 
	y_d(x) \coloneqq c\, \sin(\pi x_1) \cos(\pi x_2),
\end{equation*}
where we choose $c \coloneqq \frac{2}{\TV(\tilde{u})}$ with $\tilde{u}(x) \coloneqq 2 \pi^2\, \sin(\pi x_1) \cos(\pi x_2)$. 
This yields $- \Delta y_d = u_d$ and $\TV(u_d) = 2$ and, in particular, 
$\bar{u} = u_d$ would be an optimal solution to \eqref{eq:P}, if we replace the constraint $\TV(u) \leq 1$ by $\TV(u) \leq 2$.
We again start \cref{alg:Reps} with $\eps = 10^{-5}$ and reduce $\eps$ in each iteration of \cref{alg:Reps} by the factor $0.5$ until we reach the desired value of $\eps = \num{1.6e-07}$. 
This value is differing from the desired value in \cref{sec:numeric_exact_solution}, because the active set method applied to \eqref{eq:Qk} does not converge for $\eps = \num{7.8e-08}$
within the maximum number of iterations. After reaching the desired value for $\eps$, we use the same termination criterion with a tolerance of $\texttt{tol} = 10^{-2}$ 
as in \cref{sec:numeric_exact_solution}, that is, we again terminate if $\TV_\eps(u_k) = F_\eps(u_k,\phi_{k+1}) \leq 1 + 10^{-2}$. 
We also use the same initalizations for the active set method for \eqref{eq:Pk} and \eqref{eq:Qk} as in \cref{sec:numeric_exact_solution}. 
\Cref{alg:Reps} performes $7$ iterations until the termination criterion is fulfilled. The results are presented in \cref{tab:without_exact_solution}, where we list the same information as in \cref{tab:exact_solution} except for the relative errors, which are not known, because we do not know the corresponding optimal solution to \eqref{eq:P}. 
For the same reason, we provide the plot of the desired control $u_d$ in \cref{fig:without_exact_solution} instead of the optimal solution $\bar{u}$ to \eqref{eq:P} 
for the comparison to the plot of the solution $u_k$ to \eqref{eq:Peps} with $\eps = \num{1.6e-07}$ obtained by \cref{alg:Reps}.

\begin{table}[h!]
	\centering
	\caption{Results obtained by applying \cref{alg:Reps} to the instance from \cref{sec:numeric_without_exact_solution} with an additional reduction of $\eps$ in each iteration.}	
	\vspace*{1ex}
		\begin{tabular}{c|cccccc}
			\toprule
			$k$ & $\eps $ & $ J(u_k)$ & \texttt{it}\eqref{eq:Pk} & \texttt{it}\eqref{eq:Qk} & $\TV_\eps(u_k)$ & $\TV(u_k) \geq$ \\
			\midrule
			0 & \num{1.0e-05} & 0.113 & 1 & 1 & 0.64278 & 1.28556 \\
			1 & \num{5.0e-06} & 0.113 & 1 & 12 & 1.07743 & 1.61449 \\ 
			2 & \num{2.5e-06} & 0.115 & 3 & 6 & 1.00820 & 1.31846  \\
			3 & \num{1.3e-06} & 0.117 & 3 & 6 & 1.00330 & 1.17588  \\
			4 & \num{6.3e-07} & 0.118 & 5 & 6 & 1.00158 & 1.09302  \\
			5 & \num{3.1e-07} & 0.119 & 4 & 6 & 1.00423 & 1.05908 \\
			6 & \num{1.6e-07} & 0.119 & 4 & 7 & 1.00319 & 1.03100  \\
			\bottomrule
		\end{tabular}
	\label{tab:without_exact_solution}
\end{table}

\begin{figure}[h!]
	\centering
	\begin{minipage}{\textwidth}
		\centering
		\begin{minipage}[][][b]{0.49\textwidth}
			\centering
			\subfigure[lof][Desired control $u_d$]{\includegraphics[height=4cm]{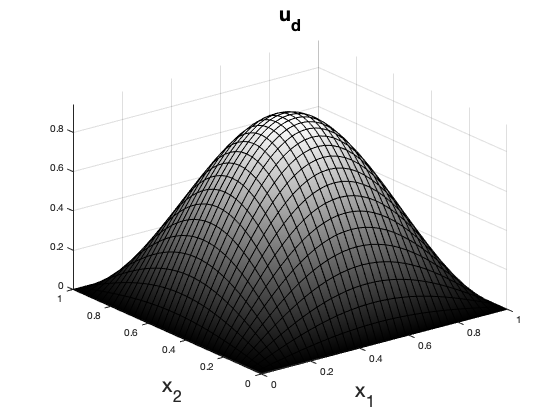}}
		\end{minipage}
		\begin{minipage}[][][b]{0.49\textwidth}
			\centering
			\subfigure[lof][Optimal solution $u_k$]{\includegraphics[height=4cm]{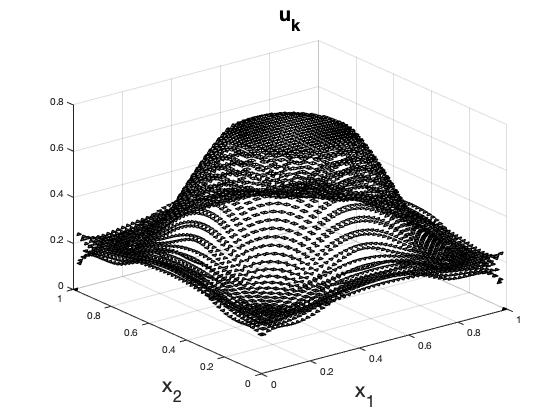}}
		\end{minipage}
	\end{minipage}
	\caption{Plots of the desired control $u_d$ from \eqref{eq:P} corresponding to the instance from \cref{sec:numeric_without_exact_solution} and the associated solution $u_k$ to \eqref{eq:Peps} with $\eps = \num{1.6e-07}$ obtained by applying \cref{alg:Reps}.}
	\label{fig:without_exact_solution}
\end{figure}

\section*{Acknowledgement}

We are very grateful to Christian Kreuzer (TU Dortmund) for pointing out reference \cite{CMcI10} to us.

\renewcommand*{\bibfont}{\footnotesize}
\printbibliography

\end{document}